\newcommand{\N}{\mathbb{N}}
\newcommand{\Q}{\mathbb{Q}}
\newcommand{\R}{\mathbb{R}}
\newcommand{\C}{\mathbb{C}}
\renewcommand{\Re}{\operatorname{Re}}
\renewcommand{\Im}{\operatorname{Im}}
\newcommand{\mc}{\mathcal}
\newcommand{\mb}{\mathbf}
\newcommand{\rg}{\operatorname{rg}}
\newtheorem{lemma}{Lemma}[section]
\newtheorem{proposition}[lemma]{Proposition}
\newtheorem{theorem}[lemma]{Theorem}
\newtheorem{corollary}[lemma]{Corollary}
\theoremstyle{remark}
\newtheorem{remark}[lemma]{Remark}
\theoremstyle{definition}
\newtheorem{definition}[lemma]{Definition}
\numberwithin{equation}{section}
\DeclareMathOperator{\artanh}{artanh}
\title{Strichartz estimates for the one-dimensional wave equation}
\author{Roland Donninger}
\address{Universit\"at Wien, Fakult\"at f\"ur Mathematik,
  Oskar-Morgenstern-Platz 1, 1090 Vienna, Austria}
\email{roland.donninger@univie.ac.at}
\author{Irfan Glogi\'c}
\address{Universit\"at Wien, Fakult\"at f\"ur Mathematik,
  Oskar-Morgenstern-Platz 1, 1090 Vienna, Austria}
\email{irfan.glogic@univie.ac.at}
\thanks{Both authors acknowledge support by the Austrian Science Fund
  FWF, Project P 30076, ``Self-similar blowup in dispersive wave equations''.}
\begin{document}

\begin{abstract}
We study the hyperboloidal initial value problem for the
one-dimensional wave equation perturbed by a smooth potential. We
show that the evolution decomposes into a finite-dimensional spectral
part and an
infinite-dimensional radiation part. For the radiation part we prove a set of Strichartz
estimates. As an application we study the long-time asymptotics of
Yang-Mills fields on a wormhole spacetime.
  \end{abstract}

\maketitle

\section{Introduction}
\noindent Strichartz estimates were originally 
discovered in the context of the Fourier restriction problem
\cite{Str77} but only later their true power was exploited 
in the study of nonlinear wave equations \cite{LinSog95}. To illustrate
this point, consider for instance the Cauchy problem for the cubic
wave equation in three spatial dimensions,
\begin{equation}
  \label{eq:cubic}
  \begin{cases}
    (\partial_t^2-\Delta_x)u(t,x)=u(t,x)^3 & (t,x)\in \R\times\R^3\\
    u(t,x)=f(x),\quad \partial_t u(t,x)=g(x) & (t,x)\in \{0\}\times \R^3,
  \end{cases}
  \end{equation}
for given initial data $f,g\in \mc S(\R^3)$, say. A weak formulation of
Eq.~\eqref{eq:cubic} is provided by Duhamel's formula
\begin{equation}
  \label{eq:cubicweak}
  u(t,\cdot)=\cos(t|\nabla|)f+\frac{\sin(t|\nabla|)}{|\nabla|}g
+\int_0^t \frac{\sin((t-t')|\nabla|)}{|\nabla|}\left
  (u(t',\cdot)^3\right)dt'
\end{equation}
with the \emph{wave propagators} $\cos(t|\nabla|)$ and
$\frac{\sin(t|\nabla|)}{|\nabla|}$. The latter are the Fourier multipliers
that yield the solution to the
free wave equation $(\partial_t^2 -\Delta_x)u(t,x)=0$.
The point is that Eq.~\eqref{eq:cubicweak} is a reformulation of
Eq.~\eqref{eq:cubic} as a fixed point problem. Proving the existence
of solutions 
to Eq.~\eqref{eq:cubic} therefore amounts to showing that the
operator on the right-hand side of Eq.~\eqref{eq:cubicweak}
has a fixed point. The main issue then is to find suitable spaces that
are compatible with the free evolution and that allow one to control
the nonlinear term. For the cubic equation \eqref{eq:cubicweak} the
Sobolev embedding $\dot H^1(\R^3)\hookrightarrow L^6(\R^3)$ suffices
but if one increases the power of the nonlinearity or the spatial dimension, a more
sophisticated argument is required. The crucial tool is provided by
the Strichartz estimates which are mixed spacetime bounds on the wave
propagators of the form
\[ \|\cos(t|\nabla|)f\|_{L^p_t(\R)L^q(\R^d)}:=\left (\int_\R \left
      \|\cos(t|\nabla|)f\right \|_{L^q(\R^d)}^pdt\right )^{1/p}\lesssim
  \|f\|_{\dot H^s(\R^d)} \]
for certain admissible values of $p$, $q$, $s$, and $d$.
For instance, the sine propagator satisfies
the Strichartz estimate
\[ \left
\|\frac{\sin(t|\nabla|)}{|\nabla|}g\right\|_{L^5_t(\R)L^{10}(\R^3)}\lesssim
\|g\|_{L^2(\R^3)} \]
which allows one to control a quintic nonlinearity in three dimensions. At the same time,
the Strichartz estimates provide information on the long-time
asymptotics which makes them crucial in proving scattering.

The physical effect that is responsible for the existence of Strichartz
estimates is dispersion. The latter refers to the observation that
waves of different frequencies travel at different speeds. In other
words, a wave packet tends to spread out which leads to
an averaged decay that is quantified by the Strichartz
estimates. The strength of the dispersive decay depends strongly on
the underlying spatial dimension: The higher the space dimension, the more room
there is for the wave to spread out. On the other hand, in the
one-dimensional case, there is no dispersion at all and the evolution
is a pure transport phenomenon.
This precludes the existence of Strichartz estimates as
 is easily seen by noting that
$u(t,x)=\chi(t-x)$ for a $\chi\in C^\infty_c(\R)$ solves
$(\partial_t^2-\partial_x^2)u(t,x)=0$. By translation invariance we
have
\[ \|u(t,\cdot)\|_{L^q(\R)}=\|\chi\|_{L^q(\R)} \]
and $\|u\|_{L^p(\R)L^q(\R)}=\infty$, unless $p=\infty$.
The weak dispersion in low dimensions causes severe
difficulties in understanding the asymptotics of many models in
quantum field theory, see e.g.~\cite{LinSof15, Ste16, KowMarMun17} for
recent work.

In this paper we show that one can recover Strichartz
estimates even in the one-dimensional case if one studies a
\emph{hyperboloidal evolution problem} instead of the standard Cauchy
problem.
The key observation is that the standard Cartesian coordinates are
not very well suited for describing radiation processes.
The foliation induced by the
standard coordinates is singular at null infinity and therefore
unnatural in this context, see e.g.~\cite{DonZen14} for a discussion
on this. Consequently, as
suggested in many physics papers, e.g.~\cite{Fri83, Zen08, Zen11, BizMac17}, we choose a hyperboloidal
foliation instead, where the leaves are asymptotic to translated
forward lightcones.
In this setup we study the evolution problem for the
one-dimensional wave equation with an arbitrary potential added (to
avoid technicalities we restrict ourselves to smooth potentials). We
show that the solution decomposes into a finite dimensional part
which is controlled by spectral theory and an infinite-dimensional
``radiation'' part which satisfies Strichartz estimates, provided a
certain spectral assumption holds. We remark in passing that there are
some technical similarities with Strichartz estimates in the context
of self-similar blowup established in \cite{Don17, DonRao18}.

As a first application we consider Yang-Mills fields on a wormhole
geometry. Under a certain symmetry reduction, we study small-energy
perturbations of an
explicit Yang-Mills connection and prove its asymptotic stability in a
Strichartz sense.

\subsection{Main results}
We use the hyperboloidal coordinates from \cite{BizMac17} defined by 
$\Phi: \R\times (-1,1)\to \R^2$,
\[ \Phi(s,y):=(s-\log\sqrt{1-y^2}, \artanh y). \]
The map $\Phi$ is a diffeomorphism onto its image with inverse
\[ \Phi^{-1}(t,x)=(t-\log\cosh x,\tanh x). \]
In these coordinates, the one-dimensional wave equation
\begin{equation}
  \label{eq:1dwave}
  (\partial_t^2-\partial_x^2)v(t,x)=0
\end{equation}
reads
\begin{equation}
  \label{eq:1dwavesy}
    \left
      [\partial_s^2+2y\partial_s\partial_y+\partial_s-(1-y^2)\partial_y^2+2y\partial_y\right]u(s,y)=0,
    \end{equation}
    where $v(t,x)=u(t-\log\cosh x, \tanh x)$.
    By testing with $\partial_s u(s,y)$, we formally find the energy identity
    \begin{equation}
      \label{eq:enid}\frac12\frac{d}{ds}\left [\int_{-1}^1 (1-y^2)|\partial_y u(s,y)|^2 dy+\int_{-1}^1
      |\partial_s u(s,y)|^2 dy\right ] =-|\partial_s
    u(s,-1)|^2-|\partial_s u(s,1)|^2
    \end{equation}
  and this motivates the introduction of the following \emph{energy norm}.
  \begin{definition}
    For functions $(f,g)\in C^1(-1,1)\times C(-1,1)$, we define the
    \emph{energy norm} $\|(f,g)\|_{\mc H}$ by
    \[ \|(f,g)\|_{\mc H}^2:=\int_{-1}^1 (1-y^2)|f'(y)|^2 dy+\int_{-1}^1
      |g(y)|^2 dy. \] 
  \end{definition}
Our main result is concerned with a more general class of wave equations,
that is to say, we study the initial
value problem
\begin{equation}
  \label{eq:HIVVintro}
  \begin{cases}
    [\partial_s^2+2y\partial_s\partial_y+\partial_s-(1-y^2)\partial_y^2+2y\partial_y+V(y)]u(s,y)=0
  & (s,y)\in (0,\infty)\times (-1,1)
  \\
  u(s,y)=f(y),\qquad \partial_s u(s,y)=g(y) & (s,y)\in \{0\}\times (-1,1)
  \end{cases}
\end{equation}
for an unknown $u: [0,\infty)\times (-1,1)\to\C$, prescribed \emph{initial data}
$f,g: (-1,1)\to\C$, and a given \emph{potential} $V:
(-1,1)\to\C$. 

\begin{definition}
  \label{def:SigmaV}
  Let $V\in C^\infty([-1,1])$ be even. We define a set $\Sigma_V\subset\C$ by saying
  that $\lambda\in \C$ belongs to $\Sigma_V$ if $\Re\lambda\geq 0$ and
  there exists a nontrivial odd $f_\lambda \in C^\infty([-1,1])$
  that satisfies
  \[
    -(1-y^2)f_\lambda''(y)+2(\lambda+1)yf_\lambda'(y)+\lambda(\lambda+1)f_\lambda(y)+V(y)f_\lambda(y)=0 \]
  for all $y\in (-1,1)$.
\end{definition}

\begin{theorem}
  \label{thm:main}
  Let $V: [-1,1]\to\C$ be smooth and even, $p\in [2,\infty]$, $q\in
  [1,\infty)$, and $\epsilon>0$. Then there exist
  constants $C_{p,q}, C_\epsilon>0$ such that the following holds.
  \begin{enumerate}
  \item The set $\Sigma_V^+:=\Sigma_V \cap \{z\in \C: \Re z>0\}$ consists of
    finitely many points.
  \item For any given odd initial data $f,g\in C^\infty([-1,1])$, there exists a
      unique solution $u=u_{f,g}\in C^\infty([0,\infty)\times (-1,1))$ to
      the initial value problem \eqref{eq:HIVVintro} that satisfies,
      for each $s\geq 0$,
      \[ \|(u(s,\cdot),\partial_s u(s,\cdot))\|_{\mc H}<\infty. \]
    \item 
      For each $\lambda\in \Sigma_V^+$ there exists a number $n(\lambda)\in
      \N_0$ and
      a set $\{\phi^{\lambda,k}_{f,g}\in C^\infty(-1,1): k\in \{0,1,\dots,n(\lambda)\}\}$
      of odd functions satisfying
      $\|(\phi_{f,g}^{\lambda,k},0)\|_{\mc
        H}<\infty$ and such that the solution $u_{f,g}$ decomposes according to
      \[ u_{f,g}(s,y)=\sum_{\lambda\in \Sigma_V^+}
        e^{\lambda s}\sum_{k=0}^{n(\lambda)}s^k \phi_{f,g}^{\lambda,k}(y)+\widetilde
        u_{f,g}(s,y). \]
The map $(f,g)\mapsto \phi_{f,g}^{\lambda,k}$ has finite rank
      and 
      \begin{align*}
        \left \|(\widetilde u_{f,g}(s,\cdot),\partial_s \widetilde
          u_{f,g}(s,\cdot))\right \|_{\mc H}&\leq C_\epsilon 
        e^{\epsilon s}\|(f,g)\|_{\mc H} \end{align*}
      for all $s\geq 0$.
      \item If $\Sigma_V\cap i\R=\emptyset$, we have the Strichartz
        estimates
        \[ \|\widetilde u_{f,g}\|_{L^p(0,\infty)L^q(-1,1)}\leq
          C_{p,q}\|(f,g)\|_{\mc H}. \]
  \end{enumerate}
\end{theorem}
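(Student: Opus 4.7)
The plan is to recast \eqref{eq:HIVVintro} as an abstract Cauchy problem $\Phi'(s)=L\Phi(s)$ on the Hilbert space $\mc H$ obtained by completing smooth odd pairs $(f,g)$ in the energy norm, and to write $L=L_0+L_V$, where $L_V(u,\dot u)=(0,-Vu)$ is a bounded (in fact compact) perturbation of the free generator $L_0$. The energy identity \eqref{eq:enid} makes $L_0$ dissipative up to a bounded correction, so the Lumer--Phillips theorem yields a $C_0$-semigroup $S(s)$ generated by $L$ on $\mc H$. Propagation of interior regularity, combined with the outflow character of the degenerate transport term at $y=\pm 1$ (which means no boundary conditions need to be imposed), upgrades this to the smooth unique solution claimed in item (2). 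To identify the unstable spectrum, reduce the eigenvalue equation $(L-\lambda)\Phi=0$ to the second-order ODE of Definition~\ref{def:SigmaV}; the odd and smoothness conditions then match exactly, so $\sigma(L)\cap\{\Re\lambda>0\}=\Sigma_V^+$. Constructing $(\lambda-L_0)^{-1}$ by variation of parameters from a fundamental system of the free ODE shows $\sigma(L_0)\cap\{\Re\lambda>0\}=\emptyset$; compactness of $L_V$ combined with the analytic Fredholm theorem makes $(\lambda-L)^{-1}$ meromorphic on $\{\Re\lambda>0\}$ with finite-rank residues, while an a priori $\mc H$-bound on eigenfunctions forbids accumulation at infinity or at $i\R$, yielding~(1).

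For (3), introduce the Riesz projection $P=\sum_{\lambda\in\Sigma_V^+}\frac{1}{2\pi\I}\oint(\zeta-L)^{-1}d\zeta$ and split $\mc H=\rg P\oplus\ker P$. On $\rg P$ (finite-dimensional and invariant), $S(s)$ is a matrix exponential whose Jordan normal form produces the expansion $\sum_{\lambda}e^{\lambda s}\sum_k s^k\phi_{f,g}^{\lambda,k}$ with finite-rank data maps, exactly as claimed. On $\ker P$ the spectral bound of $L$ is $\le 0$, and the Gearhart--Pr\"uss theorem in Hilbert space gives $\|S(s)(I-P)\|\le C_\epsilon e^{\epsilon s}$ provided one has uniform boundedness of $(\lambda-L)^{-1}|_{\ker P}$ on the vertical line $\Re\lambda=\epsilon$; this reduces to a Neumann-series estimate for $|\Im\lambda|$ large, based on the explicit free resolvent, together with continuity on compact sets from meromorphy.

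Finally, for (4), the assumption $\Sigma_V\cap i\R=\emptyset$ upgrades the resolvent bound to the closed half-plane $\{\Re\lambda\ge 0\}$ and turns the Gearhart--Pr\"uss statement into a genuine exponential decay $\|S(s)(I-P)\|_\mc H\lesssim e^{-\delta s}$ for some $\delta>0$. I would first establish Strichartz for the free odd semigroup: using d'Alembert together with the hyperboloidal substitution, odd solutions of \eqref{eq:1dwavesy} with $V=0$ admit an explicit representation of the form $u_0(s,y)=F(s+\log(1-y))-F(s+\log(1+y))$, and a direct change of variables $\xi=s+\log(1\pm y)$ bounds $\|u_0\|_{L^p_s L^q_y}$ by the $L^2$ norm of the profile $F'$, hence by $\|(f,g)\|_\mc H$, for all admissible $p,q$. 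To transfer the bound to the perturbed semigroup on $\ker P$, I would use the Duhamel identity $S(s)(I-P)=S_0(s)(I-P)-\int_0^s S_0(s-\sigma)L_V S(\sigma)(I-P)\,d\sigma$, exploiting exponential decay of $S(\sigma)(I-P)$ in $\mc H$ so that the $\sigma$-integral converges absolutely, and then combining the free Strichartz bound with Young's convolution inequality in $s$. The main obstacle is precisely this final closure: because $V$ is bounded but not small, the argument is not a contraction but rather a resolvent-based perturbation, and the hypothesis $\Sigma_V\cap i\R=\emptyset$ is exactly what rules out the low-frequency resonances that would otherwise destroy the uniform resolvent bound on the imaginary axis and with it the Strichartz estimate.
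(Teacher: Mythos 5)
Your treatment of parts (1)--(3) follows the paper's semigroup route faithfully: Lumer--Phillips for the free generator, bounded perturbation for the potential, compactness of $\mb L_V'$ plus the analytic Fredholm theorem for the unstable spectrum, the Riesz projection with Jordan structure for the finite-dimensional piece, and Gearhart--Pr\"uss for the $e^{\epsilon s}$ energy bound. Part (4), however, has a genuine gap. The claim that $\Sigma_V\cap i\R=\emptyset$ upgrades Gearhart--Pr\"uss to exponential decay $\|\mb S_V(s)(\mb I-\mb P_V)\|_{\mc H}\lesssim e^{-\delta s}$ is false: the essential spectrum of the reduced generator still fills the closed left half-plane (the potential is a compact perturbation and $\sigma(\mb L_0)=\{\Re z\leq 0\}$ contains $i\R$), so the resolvent cannot be uniformly bounded on any half-plane $\{\Re\lambda\geq -\delta\}$ with $\delta>0$, and the best Gearhart--Pr\"uss gives is $C_\epsilon e^{\epsilon s}$ for every $\epsilon>0$ (Lemma~\ref{lem:SVeps}); Remark~\ref{rem:impren} records that with extra effort one can reach a uniform bound, but never decay. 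Without decay the Duhamel closure fails: after Minkowski and the homogeneous free Strichartz estimate, the term $\int_0^s\mb S_0(s-\sigma)\mb L_V'\mb S_V(\sigma)(\mb I-\mb P_V)\mb f\,d\sigma$ is controlled by $\int_0^\infty\|\mb S_V(\sigma)(\mb I-\mb P_V)\mb f\|_{\mc H}\,d\sigma$, which diverges. (The sign in your Duhamel identity should also be $+$ rather than $-$, but that is minor by comparison.)

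The paper's actual mechanism is quite different and is where the real work lies. Rather than perturbing the semigroup in time, it works at the resolvent level: the composition $\mb L_V'\mb R_{\mb L_0}(\lambda)$ gains a factor $|\lambda|^{-1}$ because $\mb L_V'$ maps the first component into the second (Lemma~\ref{lem:LVRes}), which translates into one extra order of decay in $|\Im\lambda|$ for the Green function difference $G_V-G_0$ compared with either kernel alone. The operators $T_\epsilon(s)$, $\dot T_\epsilon(s)$ encode the inverse Laplace transform of this difference along $\Re\lambda=\epsilon$; the symbol-type estimates of Lemma~\ref{lem:GV-G0} feed into a stationary-phase analysis (Proposition~\ref{prop:K}) yielding the pointwise kernel bound $|K_\epsilon(s,y,x)|\lesssim e^{\epsilon s}\langle\log(1-y)\rangle^3\langle s+\log(1-x)\rangle^{-2}$, from which the $L^p_sL^q_y$ estimate follows by Young's inequality in the logarithmic variable. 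The assumption $\Sigma_V\cap i\R=\emptyset$ enters not as a spectral gap but through Lemma~\ref{lem:W}: it forces $|u_1(0,\lambda)|\gtrsim 1$ in a strip around $i\R$, so the Wronskian in the denominator of the Green function does not vanish and the inverse Laplace contour can be taken at $\Re\lambda=\epsilon$ arbitrarily small. Combined with the free bound of Proposition~\ref{prop:Strich0} and monotone convergence this gives the $\epsilon$-free Strichartz estimate, which your Duhamel route cannot reach.
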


\begin{remark}
  \label{rem:impren}
  With slightly more effort it is also possible to improve the energy
  bound to
  \[ \|(\widetilde u_{f,g}(s,\cdot), \partial_s \widetilde
    u_{f,g}(s,\cdot))\|_{\mc H}\lesssim \|(f,g)\|_{\mc H} \]
  for all $s\geq 0$,
  provided $\Sigma_V \cap i\R=\emptyset$. To keep the paper at a
  reasonable length, however, we refrain from working out the details.
\end{remark}

\begin{remark}
The smoothness assumptions are imposed for convenience and can of
course be considerably weakened. This produces some inessential
technicalities but no new insight. 
  \end{remark}

\section{Application: Asymptotics of Yang-Mills fields on wormholes}

\noindent We give an application of Theorem \ref{thm:main} to
Yang-Mills fields on wormholes studied in \cite{BizMac17}.

\subsection{Setup}
 As in \cite{BizMac17}, we consider
$M^4:=\R\times \R\times (0,\pi)\times (0,2\pi)$.
Let $(t,r,\theta,\varphi): M^4\to \R^4$ be a chart on $M^4$.
We define a Lorentzian metric $g$ on $M^4$ by
\[ g:=-dt\otimes dt+dr\otimes dr+\cosh(r)^2 (d\theta\otimes d\theta +
  \sin(\theta)^2 d\varphi\otimes d\varphi). \]
Then $(M^4,g)$ is a Lorentzian manifold with 2 asymptotic ends (as
$r\to\pm \infty$), which physically represents a wormhole spacetime.
We would like to study Yang-Mills connections on the principal bundle
$M^4\times \mathrm{SU}(2)$.
That is to say, we are looking for $\mathrm{su}(2)$-valued one-forms
\[ A=A_0 dt+A_1 dr+A_2 d\theta + A_3 d\varphi \]
on $M^4$ that formally\footnote{``Formally'' here means that we are in
  fact looking for solutions of the Euler-Lagrange equation associated
  to the Yang-Mills action.}
extremize the Yang-Mills action
\[ \int_{(M^4,g)} \mathrm{tr}(F_{\mu\nu}F^{\mu\nu}), \]
where $F_{\mu\nu}:=\partial_\mu A_\nu-\partial_\nu
A_\mu+[A_\mu,A_\nu]$ is the curvature two-form.
The Euler-Lagrange equation associated to the Yang-Mills action reads
\begin{equation}
  \label{eq:YM}
  \frac{1}{\sqrt{|\det g|}}\partial_\mu\left (\sqrt{|\det
      g|}F^{\mu\nu}\right )+[A_\mu,F^{\mu\nu}]=0
  \end{equation}
and is called the Yang-Mills equation.
Here, Greek indices run from $0$ to $3$ and we use Einstein's
summation convention. As usual, indices are raised and lowered by the
metric, i.e., $F^{\mu\nu}=g^{\mu\alpha}g^{\nu\beta}F_{\alpha\beta}$,
where $g_{\mu\nu}=g(\partial_\mu,\partial_\nu)$ with
$\partial_0=\partial_t$, $\partial_1=\partial_r$,
$\partial_2=\partial_\theta$, $\partial_3=\partial_\varphi$, and
$g^{\mu\nu}$ is defined by the requirement that
$g^{\mu\alpha}g_{\alpha\nu}=\delta^\mu{}_\nu$, where
$\delta^\mu{}_\nu$ is the Kronecker symbol. Furthermore, $\det g=-\cosh(r)^4\sin(\theta)^2$ is the
determinant of the matrix $(g_{\mu\nu})$. We choose the basis
$\{\tau_1,\tau_2,\tau_3\}$ for $\mathrm{su}(2)$, where
\[ \tau_1:=-\frac{i}{2}
  \begin{pmatrix}
    0 & 1\\
    1 & 0
  \end{pmatrix},
  \qquad
  \tau_2:=-\frac{i}{2}
  \begin{pmatrix}
    0 & -i \\
    i & 0
  \end{pmatrix},
  \qquad
  \tau_3:=-\frac{i}{2}
  \begin{pmatrix}
    1 & 0 \\
    0 & -1
  \end{pmatrix}.
\]
Then $\cos\theta \tau_3 d\varphi$ solves the Yang-Mills equation, as
is easily checked.
We would like to study the stability of the explicit solution
$\cos\theta\tau_3 d\varphi$. Following \cite{BizMac17}, we consider the perturbation
ansatz
\[ A=\cos\theta\tau_3d\varphi+W(t,r)(\tau_1 d\theta+\sin\theta \tau_2
  d\varphi) \]
for a real-valued function $W$.
By noting the commutator relations $[\tau_1,\tau_2]=\tau_3$, 
$[\tau_1,\tau_3]=-\tau_2$, and $[\tau_2,\tau_3]=\tau_1$,
we readily
compute the nonvanishing components of $F_{\mu\nu}$, which are given by
$F_{02}=\partial_0 W\tau_1$, $F_{03}=\partial_0
  W\sin\theta\tau_2$, 
  $F_{12}=\partial_1 W\tau_1$,
  $F_{13}=\partial_1 W\sin\theta\tau_2$,
  and $F_{23}=-(1-W^2)\sin\theta\tau_3$.
  Consequently,
  \begin{align*}
    F^{02}&=-\frac{\partial_0 W\sin\theta}{\sqrt{|\det g|}}\tau_1,
  &
      F^{03}&=-\frac{\partial_0 W}{\sqrt{|\det g|}}\tau_2,
    &
      F^{12}&=\frac{\partial_1 W\sin\theta}{\sqrt{|\det g|}}\tau_1, \\
    F^{13}&=\frac{\partial_1 W}{\sqrt{|\det g|}}\tau_2,
    &
     F^{23}&=-\frac{1-W^2}{\cosh(r)^2 \sqrt{|\det g|}}\tau_3,
  \end{align*}
  and for $\nu\in \{2,3\}$, Eq.~\eqref{eq:YM} reduces to
\begin{equation}
  \label{eq:YMW}
  (\partial_t^2-\partial_r^2)W(t,r)=\frac{W(t,r)(1-W(t,r)^2)}{\cosh(r)^2},
\end{equation}
whereas for $\nu\in \{0,1\}$, Eq.~\eqref{eq:YM} is satisfied identically.
In particular, we observe that $W=0$ is a
solution, showing that $\cos\theta\tau_3d\varphi$ indeed solves
the Yang-Mills equation.
Consequently, under this particular symmetry reduction enforced by the
perturbation ansatz,
the study of the stability of $\cos\theta\tau_3 d\varphi$ as a
solution to the Yang-Mills equation boils down to analyzing the
stability of the trivial solution $W=0$ of Eq.~\eqref{eq:YMW}. Note that
Eq.~\eqref{eq:YMW} is effectively a one-dimensional semilinear wave
equation and studying its asymptotics might seem hard due to the lack
of dispersion.

Evidently, there are two more trivial solutions to Eq.~\eqref{eq:YMW},
namely $W=\pm 1$. In \cite{BizMac17} it is shown that these
solutions are linearly stable under general perturbations, whereas the
solution $W=0$ has one unstable mode. In this paper, we restrict
ourselves to odd solutions. Under this assumption, the solutions
$W=\pm 1$ disappear and $W=0$ becomes stable, as we will see. The restriction to
odd solutions is important for our method of proof, where the free
wave equation and the corresponding explicit solution formula is taken
as a starting point. We believe that our results remain valid in the
general case but this would require a different proof. We plan to
address this issue in the future. 

\subsection{Hyperboloidal formulation}

The hyperboloidal initial value problem for the Yang-Mills equation
\eqref{eq:YMW} takes the
form
\begin{equation}
  \label{eq:YMhyp}
    \begin{cases}
    [\partial_s^2+2y\partial_s\partial_y+\partial_s-(1-y^2)\partial_y^2+2y\partial_y-1]u(s,y)=-u(s,y)^3
  & (s,y)\in (0,\infty)\times (-1,1)
  \\
  u(s,y)=f(y),\qquad \partial_s u(s,y)=g(y) & (s,y)\in \{0\}\times (-1,1)
  \end{cases},
\end{equation}
where $W(t,r)=u(t-\log\cosh r,\tanh r)$.
Note that the linear part in \eqref{eq:YMhyp} is
Eq.~\eqref{eq:HIVVintro} with $V(y)=-1$. 
We compute $\Sigma_V$.
\begin{lemma}
  \label{lem:specYM}
  Let $V(y)=-1$ for all $y\in [-1,1]$. Then $\Sigma_V=\emptyset$.
\end{lemma}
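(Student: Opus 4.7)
The plan is to exploit the conformal relation between the hyperboloidal reduction and the one-dimensional Schr\"odinger operator with P\"oschl--Teller potential. Setting $y=\tanh r$ and $h(r):=(\cosh r)^{-\lambda}f(\tanh r)$, a direct computation shows that with $V\equiv -1$ the eigenvalue equation of Definition~\ref{def:SigmaV} is equivalent to
\[
-h''(r)-\operatorname{sech}^2(r)\,h(r)=-\lambda^2\,h(r),\qquad r\in\R.
\]
The transformation preserves oddness and smoothness, and the smoothness of $f$ at $y=\pm1$ together with $\Re\lambda\ge 0$ gives the decay $h(r)=O(e^{-(\Re\lambda)|r|})$ as $r\to\pm\infty$.

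When $\Re\lambda>0$, this decay immediately places $h$ in $L^2(\R)$, so $h$ is a genuine $L^2$-eigenfunction of the self-adjoint operator $\mathcal{H}:=-\partial_r^2-\operatorname{sech}^2 r$ at eigenvalue $-\lambda^2$. Self-adjointness forces $\lambda$ real, so $-\lambda^2<0$. The operator $\mathcal{H}$ is P\"oschl--Teller with coupling $s(s+1)=1$, i.e.\ $s=(\sqrt 5-1)/2\in(0,1)$; the classical analysis produces a single bound state, the even ground state proportional to $(\cosh r)^{-s}$. Since $h$ is odd it must vanish, contradicting the nontriviality of $f$.

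For $\Re\lambda=0$ with $\lambda=ib\ne 0$ the plan is first to promote $h$ to $L^2$. I would multiply the eigenvalue ODE by $\bar f$ and integrate on $(-1,1)$; after one integration by parts the boundary terms drop out thanks to the $1-y^2$ factor, and using $\int_{-1}^1 y(|f|^2)'\,dy=2|f(1)|^2-\|f\|_{L^2}^2$ (from oddness) the imaginary part of the resulting identity reduces to $b\,|f(1)|^2=0$. Hence $f(\pm 1)=0$, which upgrades the decay of $h$ to $O(e^{-2|r|})$ and puts $h$ in $L^2(\R)$. But then $h$ would be an $L^2$-eigenfunction of $\mathcal{H}$ at the positive energy $b^2$, contradicting the absence of embedded eigenvalues for Schr\"odinger operators with exponentially decaying potentials (equivalently, this is visible from the explicit hypergeometric scattering solutions of P\"oschl--Teller).

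The threshold case $\lambda=0$ is the main obstacle: there $h$ need not be $L^2$ and the spectral argument degenerates, so one has to handle it by hand. I would return to the $y$-variable energy identity, which in this case simplifies to $\int_{-1}^{1}(1-y^2)|f'|^2\,dy=\int_{-1}^{1}|f|^2\,dy$, and combine it with the weighted Poincar\'e inequality
\[
\int_{-1}^{1}|f|^2\,dy\le\tfrac12\int_{-1}^{1}(1-y^2)|f'|^2\,dy,
\]
valid for odd smooth $f$; this is proved by writing $f(y)=\int_0^y f'(t)\,dt$ and applying Cauchy--Schwarz and Fubini on each half-interval. The strict factor $\tfrac12$ forces $f\equiv 0$, completing the verification that $\Sigma_V=\emptyset$. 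The delicate inputs are thus the endpoint-vanishing trick needed for pure imaginary $\lambda$ and the weighted Poincar\'e inequality at threshold; everything else is handled by the standard P\"oschl--Teller spectral picture.
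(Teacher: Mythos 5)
Your proof is correct, and it takes a genuinely different route from the paper. The paper's own argument is essentially a citation: it observes that the ODE is explicitly solvable in terms of hypergeometric functions and refers to the detailed spectral analysis in the reference \cite{BizMac17}, with no computation given. Your approach is self-contained: you transform the Legendre-type operator into the P\"oschl--Teller Schr\"odinger operator $\mathcal{H}=-\partial_r^2-\operatorname{sech}^2 r$ via $y=\tanh r$, $h(r)=(\cosh r)^{-\lambda}f(\tanh r)$ (I checked the transformation and the resulting eigenvalue equation $-h''-\operatorname{sech}^2 r\,h=-\lambda^2 h$ is correct), and then dispose of the three regimes with soft spectral theory and energy estimates. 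For $\Re\lambda>0$ the exponential decay places $h$ in $L^2$, self-adjointness forces $\lambda\in\R_{>0}$, and the unique bound state of $-\partial_r^2-\operatorname{sech}^2 r$ is even, so an odd eigenfunction vanishes. For $\lambda=ib$, $b\neq 0$, the imaginary part of the quadratic-form identity on $(-1,1)$ does yield $|f(1)|^2=0$, promoting $h$ to $L^2$, and the classical absence of embedded eigenvalues for $L^1$ (indeed exponentially decaying) 1D potentials finishes that case. For $\lambda=0$ the energy identity $\int(1-y^2)|f'|^2=\int|f|^2$ combined with a weighted Poincar\'e bound with constant strictly below $1$ gives $f\equiv 0$. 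This buys a conceptual, reference-free argument at the cost of invoking some standard Schr\"odinger spectral facts; the paper's route is shorter to state but relies on external hypergeometric computations.

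One small inaccuracy in the threshold case: the Cauchy--Schwarz/Fubini computation you describe gives
\[
\int_{-1}^{1}|f|^2\,dy\le\Big(\int_0^1\operatorname{artanh}(y)\,dy\Big)\int_{-1}^{1}(1-y^2)|f'|^2\,dy=(\log 2)\int_{-1}^{1}(1-y^2)|f'|^2\,dy,
\]
not the constant $\tfrac12$. The sharp constant $\tfrac12$ is indeed correct (it is the reciprocal of the first odd Legendre eigenvalue $\ell(\ell+1)|_{\ell=1}=2$), but it is not produced by the elementary argument you outline. This does not affect the conclusion: since $\log 2<1$, the energy identity still forces $f\equiv 0$. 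It would be cleaner either to state the constant as $\log 2$ (matching the elementary proof) or to justify $\tfrac12$ via the Legendre spectrum.
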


\begin{proof}
  According to Definition \ref{def:SigmaV}, we have to solve the
  spectral problem
\[
  -(1-y^2)f''(y)+2(\lambda+1)yf'(y)+\lambda(\lambda+1)f(y)-f(y)=0 \]
for $f\in C^\infty([-1,1])$ odd and $\Re\lambda\geq 0$.
This ODE can be solved explicitly in terms of hypergeometric functions
and it follows that no solution other than $f=0$ exists. We refrain
from giving the details here because this spectral problem was discussed at
length in
\cite{BizMac17}.
\end{proof}

\begin{definition}
  Set $V(y):=-1$ for all $y\in [-1,1]$ and let $u_{f,g}$ be the
  solution of Eq.~\eqref{eq:HIVVintro} provided by Theorem \ref{thm:main}.
  We define the \emph{wave propagators} by
  \[ C(s)f:=u_{f,0}(s,\cdot),\qquad S(s)g:=u_{0,g}(s,\cdot). \]
\end{definition}

By Theorem \ref{thm:main} and Lemma \ref{lem:specYM} we have the bound $\|S(s)
g\|_{L^6(-1,1)}\lesssim \|g\|_{L^2(-1,1)}$ for any $s\geq 0$ and thus,
$S(s)$ uniquely extends to a bounded operator $S(s):
L^2_\mathrm{odd}(-1,1)\to L_\mathrm{odd}^6(-1,1)$, where
$L^q_\mathrm{odd}(-1,1)$ denotes the completion of
\[ \{f\in
C^\infty([-1,1]): f \mbox{ is odd}\} \] with
respect to $\|\cdot\|_{L^q(-1,1)}$.
Then a weak formulation of Eq.~\eqref{eq:YMhyp} is given by
\begin{equation}
  \label{eq:YMhypweak}
  u(s,\cdot)=C(s)f+S(s)g-\int_0^s S(s-s')\left (u(s',\cdot)^3\right )ds'.
\end{equation}

\begin{theorem}
  \label{thm:YM}
  There exists a $\delta>0$ such that for all odd functions
  $f,g\in C^\infty([-1,1])$ with $\|(f,g)\|_{\mc H}<\delta$, Eq.~\eqref{eq:YMhypweak} 
 has a unique
 solution $u$ in $C([0,\infty),L^6_\mathrm{odd}(-1,1))$. Furthermore,
 $u\in L^p((0,\infty),L^6_\mathrm{odd}(-1,1))$ for any $p\in [3,\infty]$.
\end{theorem}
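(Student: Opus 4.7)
The plan is a Banach fixed-point argument on Duhamel's formula \eqref{eq:YMhypweak}, with the working space chosen to match the cubic nonlinearity. The essential inputs are Lemma \ref{lem:specYM}, which ensures $\Sigma_V=\emptyset$ so that the full solution of the linear problem coincides with its radiation part, and Theorem \ref{thm:main}(4), which supplies the homogeneous Strichartz bound $\|C(s)f\|_{L^p((0,\infty))L^q(-1,1)}+\|S(s)g\|_{L^p((0,\infty))L^q(-1,1)}\lesssim\|(f,g)\|_{\mc H}$ for every $p\in[2,\infty]$ and $q\in[1,\infty)$.

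I would fix the working space $X:=L^3((0,\infty),L^6_{\mathrm{odd}}(-1,1))$ and the closed ball $B_\delta:=\{u\in X:\|u\|_X\leq\delta\}$. The homogeneous Strichartz estimate with $(p,q)=(3,6)$ bounds the linear part $C(s)f+S(s)g$ in $X$ by $\|(f,g)\|_{\mc H}$. For the nonlinear retarded integral, Minkowski's integral inequality combined with slicewise application of the homogeneous Strichartz bound (after the change of variable $\sigma=s-s'$, using $\|(0,F(s',\cdot))\|_{\mc H}=\|F(s',\cdot)\|_{L^2}$) yields
\[ \left\|\int_0^s S(s-s')F(s',\cdot)\,ds'\right\|_{L^p_sL^q_y}\leq\int_0^\infty\|\mathbf 1_{\{\cdot>s'\}}S(\cdot-s')F(s',\cdot)\|_{L^p_sL^q_y}\,ds'\lesssim\|F\|_{L^1_{s'}L^2_y}, \]
valid for all admissible $(p,q)$; no $TT^*$ or dual Strichartz machinery is needed precisely because the range of admissible exponents in Theorem \ref{thm:main}(4) is unrestricted. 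Applied to $F=u^3$, Hölder gives $\|u^3\|_{L^1L^2}=\|u\|_X^3$, so the nonlinear operator
\[ \Psi(u)(s,\cdot):=C(s)f+S(s)g-\int_0^s S(s-s')\bigl(u(s',\cdot)^3\bigr)\,ds' \]
satisfies $\|\Psi(u)\|_X\leq C_1\|(f,g)\|_{\mc H}+C_2\|u\|_X^3$ and analogously $\|\Psi(u)-\Psi(v)\|_X\leq C_2(\|u\|_X^2+\|v\|_X^2)\|u-v\|_X$, using the pointwise estimate $|a^3-b^3|\lesssim(|a|^2+|b|^2)|a-b|$. Choosing $\delta$ and $\|(f,g)\|_{\mc H}$ small enough makes $\Psi$ a $\tfrac12$-contraction on $B_\delta$, and Banach's theorem yields a unique fixed point $u\in B_\delta$.

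To obtain the full range $p\in[3,\infty]$, I would feed $u$ back into \eqref{eq:YMhypweak} and rerun the Minkowski--Strichartz step with the new $p$; the bound $\|u\|_{L^pL^6}\lesssim\|(f,g)\|_{\mc H}+\|u\|_X^3<\infty$ is immediate. Continuity $u\in C([0,\infty),L^6_{\mathrm{odd}})$ follows from continuity of the linear propagators (encoded by the smoothness assertion of Theorem \ref{thm:main}(2)) together with the endpoint bound $\sup_\sigma\|S(\sigma)h\|_{L^6}\lesssim\|h\|_{L^2}$ coming from Theorem \ref{thm:main}(4) with $p=\infty,q=6$, which makes the retarded integral continuous via dominated convergence. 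To upgrade uniqueness from $B_\delta$ to all of $C([0,\infty),L^6_{\mathrm{odd}})$, I would compare two such solutions $u,v$ and use the same endpoint bound to derive
\[ \|u(s)-v(s)\|_{L^6}\lesssim\int_0^s\bigl(\|u(s')\|_{L^6}^2+\|v(s')\|_{L^6}^2\bigr)\|u(s')-v(s')\|_{L^6}\,ds', \]
whence Gronwall on each compact interval $[0,T]$ forces $u=v$. The only real technical pressure point is the retarded Strichartz estimate, but the availability of the homogeneous bound in the full non-scaling range reduces it to a one-line Minkowski argument; everything else is routine cubic fixed-point bookkeeping.
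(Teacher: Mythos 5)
Your proposal is correct and takes essentially the same route as the paper: Duhamel's formula, the retarded estimate obtained by Minkowski plus slicewise application of the homogeneous Strichartz bound, a small-data fixed-point argument, bootstrapping to the full range $p\in[3,\infty]$, and Gronwall for uniqueness in $C([0,\infty),L^6_{\mathrm{odd}})$. The only cosmetic differences are that the paper carries the $L^\infty L^6$ norm alongside $L^3L^6$ in its iteration space (so the limit is automatically continuous, being a uniform limit of continuous functions), whereas you contract in $L^3L^6$ alone and recover continuity and the $L^\infty L^6$ bound afterwards by plugging the fixed point back into Duhamel; both are fine, though your ball $B_\delta$ should really have radius comparable to a constant times $\|(f,g)\|_{\mc H}$ rather than $\delta$ itself, a routine bookkeeping adjustment.
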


\begin{remark}
  Theorem \ref{thm:YM} implies that the Yang-Mills connection
  $\cos\theta\tau_3 d\varphi$ is \emph{asymptotically stable} under
  odd small-energy perturbations on the hyperboloid
  \[ \left \{(-\tfrac12
      \log(1-y^2),\artanh y)\in \R^{1,1}: y\in (-1,1) \right \}. \]
\end{remark}

\subsection{Proof of Theorem \ref{thm:YM}}

  For $R>0$ we define
  \begin{align*} X_R:=
  \{&u\in C([0,\infty),L^6_{\mathrm{odd}}(-1,1)): 
    \|u\|_{L^3(0,\infty)L^6(-1,1)}
      +\|u\|_{L^\infty(0,\infty)L^6(-1,1)}\leq R\}.
 \end{align*}
Note that $u\in X_R$ implies $u(s,\cdot)\in L_\mathrm{odd}^6(-1,1)$ and thus,
$u(s,\cdot)^3\in L_\mathrm{odd}^2(-1,1)$ for any $s\geq 0$. Consequently, 
\[ \mc K_{f,g}(u)(s):=C(s)f+S(s)g-\int_0^s S(s-s')\left
    (u(s',\cdot)^3\right)ds' \]
is well-defined as a map $\mc K_{f,g}: X_R\to
C([0,\infty),L^6_\mathrm{odd}(-1,1))$ for any $R>0$ by Theorem \ref{thm:main}.

\begin{lemma}
  \label{lem:mapK}
  There exist $M,\delta_0>0$ such that for all $\delta\in
  (0,\delta_0]$ and any pair of odd functions
  $f,g\in C^\infty([-1,1])$ satisfying $\|(f,g)\|_{\mc H}<\delta$, $\mc
  K_{f,g}$ maps $X_{M\delta}$ to itself.
\end{lemma}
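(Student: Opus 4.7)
The plan is to control $\mathcal{K}_{f,g}(u)$ in the norm defining $X_R$ by treating the linear piece $C(s)f + S(s)g$ and the Duhamel contribution separately. For both, the decisive input is the Strichartz estimate of Theorem~\ref{thm:main}(4), which applies here because $V \equiv -1$ is smooth and even and, by Lemma~\ref{lem:specYM}, $\Sigma_V = \emptyset$. Odd parity is preserved throughout: $V$ is even and cubing preserves oddness, so $u(s',\cdot)^3 \in L^2_\mathrm{odd}(-1,1)$ for a.e.\ $s'$ and $\mathcal{K}_{f,g}(u)(s)$ takes values in $L^6_\mathrm{odd}(-1,1)$.

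For the linear piece, Theorem~\ref{thm:main}(4) applied with $(p,q)\in\{(3,6),(\infty,6)\}$ yields an absolute constant $C_0$ such that
\[
\bigl\|C(\cdot)f + S(\cdot)g\bigr\|_{L^3(0,\infty)L^6(-1,1)} + \bigl\|C(\cdot)f + S(\cdot)g\bigr\|_{L^\infty(0,\infty)L^6(-1,1)} \leq C_0 \|(f,g)\|_{\mathcal{H}} < C_0\delta.
\]
For the Duhamel term, the idea is to move the $s'$-integration outside by Minkowski, substitute $\tau := s - s'$, and apply the same Strichartz bound slice by slice. This gives, for each $p \in \{3,\infty\}$,
\begin{align*}
\left\|\int_0^s S(s-s')\bigl(u(s',\cdot)^3\bigr)\,ds'\right\|_{L^p_s(0,\infty) L^6_y}
&\leq \int_0^\infty \bigl\|S(\tau)(u(s',\cdot)^3)\bigr\|_{L^p_\tau(0,\infty) L^6_y}\,ds' \\
&\leq C_1 \int_0^\infty \|u(s',\cdot)\|_{L^6}^3\,ds' = C_1 \|u\|_{L^3(0,\infty)L^6(-1,1)}^3,
\end{align*}
using $\|u(s',\cdot)^3\|_{L^2} = \|u(s',\cdot)\|_{L^6}^3$ in the last equality.

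Combining these bounds for $u \in X_{M\delta}$ yields $\|\mathcal{K}_{f,g}(u)\|_{L^3L^6} + \|\mathcal{K}_{f,g}(u)\|_{L^\infty L^6} \leq 2C_0\delta + 2C_1 M^3\delta^3$. Setting $M := 4C_0$ and then choosing $\delta_0 > 0$ small enough that $2C_1 M^3 \delta_0^2 \leq M/2$ forces the right-hand side to be at most $M\delta$ for every $\delta \in (0,\delta_0]$, which is exactly what is needed. The continuity requirement $\mathcal{K}_{f,g}(u) \in C([0,\infty), L^6_\mathrm{odd}(-1,1))$ is handled separately: the linear contribution is continuous by Theorem~\ref{thm:main}(2) combined with density and the bounded extension of $C(s)$ and $S(s)$, while the Duhamel part is continuous by dominated convergence and strong continuity of $S(\cdot)$ on $L^2_\mathrm{odd}(-1,1)$.

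The step I expect to demand the most care is the Minkowski computation for the Duhamel integral. The genuine subtlety there is that Theorem~\ref{thm:main}(4) is stated for smooth initial data, whereas $u(s',\cdot)^3$ lies merely in $L^2_\mathrm{odd}(-1,1)$. One handles this by invoking the bounded extension $S(s) : L^2_\mathrm{odd}(-1,1) \to L^6_\mathrm{odd}(-1,1)$ already used to set up the weak formulation \eqref{eq:YMhypweak}, and observing that the Strichartz norm passes to the closure. Everything else amounts to standard H\"older-type arithmetic.
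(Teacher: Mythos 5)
Your proposal follows essentially the same route as the paper's proof: bound the linear piece by the Strichartz estimate, handle the Duhamel term via Minkowski's inequality plus the translation $\tau = s - s'$ plus Strichartz, use $\|u^3\|_{L^2} = \|u\|_{L^6}^3$, and then close with the same Hölder-type arithmetic and the same choice of $M \sim C$ and $\delta_0$ small. The continuity of $\mathcal{K}_{f,g}(u)$ in $s$ and the boundedness of the extension $S(s): L^2_{\mathrm{odd}} \to L^6_{\mathrm{odd}}$, which you flag explicitly, are handled by the paper in the remarks immediately preceding the lemma.
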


\begin{proof}
  Let $u\in X_R$ for some $R>0$ and assume that $\|(f,g)\|_{\mc H}<\delta$.
  By Theorem \ref{thm:main} and Lemma \ref{lem:specYM} we have
  \begin{align*}
  \|&\mc K_{f,g}(u)\|_{L^3(0,\infty)L^6(-1,1)} \\
  &\lesssim
                                               \|(f,g)\|_{\mc
    H}+\int_0^\infty \left \|1_{[0,s]}(s')S(s-s')\left
      (u(s',\cdot)^3\right)
  \right\|_{L^3_s(0,\infty)L^6(-1,1)}ds' \\
  &=\|(f,g)\|_{\mc
    H}+\int_0^\infty \left \|S(s-s')\left
      (u(s',\cdot)^3\right)
    \right\|_{L^3_s(s',\infty)L^6(-1,1)}ds' \\
  &=\|(f,g)\|_{\mc
    H}+\int_0^\infty \left \|S(s)\left
      (u(s',\cdot)^3\right)
    \right\|_{L^3_s(0,\infty)L^6(-1,1)}ds' \\
  &\lesssim \|(f,g)\|_{\mc H}+\int_0^\infty \|u(s',\cdot)^3\|_{L^2(-1,1)}ds'
  \\
  &=\|(f,g)\|_{\mc H}+\int_0^\infty \|u(s',\cdot)\|_{L^6(-1,1)}^3 ds'
  \\
  &=\|(f,g)\|_{\mc H}+\|u\|_{L^3(0,\infty)L^6(-1,1)}^3.
  \end{align*}
  Analogously,
    $\|\mc K_{f,g}(u)\|_{L^\infty(0,\infty)L^6(-1,1)}\lesssim
    \|(f,g)\|_{\mc H}+\|u\|_{L^3(0,\infty)L^6(-1,1)}^3$
  and we obtain
  \[
    \|\mc K_{f,g}(u)\|_{L^3(0,\infty)L^6(-1,1)}+\|\mc K_{f,g}(u)\|_{L^\infty(0,\infty)L^6(-1,1)}\leq
    C\delta+CR^3 \]
  for some constant $C>0$.
  Now we choose $\delta_0=(8C^3)^{-\frac12}$ and $R=2C\delta$.
  Then we have
  \[ C\delta+CR^3= C\delta+C(2C\delta)^3\leq C\delta+8C^3\delta_0^2
    C\delta=2C\delta \] and the claim follows with $M=2C$ since the wave propagators
  preserve oddness.
\end{proof}

Now we set up an iteration by $u_0:=0$ and
$u_n:=\mc K_{f,g}(u_{n-1})$ for $n\in \N$.
For brevity we define
\[
  \|u\|_X:=\|u\|_{L^3(0,\infty)L^6(-1,1)}+\|u\|_{L^\infty(0,\infty)L^6(-1,1)}. \]

\begin{lemma}
  \label{lem:Cauchy}
  There exist $M,\delta>0$ such that $u_n\in X_{M\delta}$ for all
  $n\in \N$ and the sequence $(u_n)_{n\in \N}$ is Cauchy with respect
  to $\|\cdot\|_X$, provided that $\|(f,g)\|_{\mc H}<\delta$.
\end{lemma}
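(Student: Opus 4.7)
The plan is to combine Lemma~\ref{lem:mapK} with a standard contraction-type difference estimate in $\|\cdot\|_X$. First, I would fix $M,\delta_0>0$ as provided by Lemma~\ref{lem:mapK} and restrict to $\delta\in(0,\delta_0]$. Since $u_0=0\in X_{M\delta}$ and $\mc K_{f,g}$ maps $X_{M\delta}$ into itself, a trivial induction gives $u_n\in X_{M\delta}$ for every $n\in\N_0$, proving the first assertion.

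For the Cauchy property, I would write
\[
u_{n+1}(s,\cdot)-u_n(s,\cdot)=\mc K_{f,g}(u_n)(s)-\mc K_{f,g}(u_{n-1})(s)=-\int_0^s S(s-s')\bigl(u_n(s',\cdot)^3-u_{n-1}(s',\cdot)^3\bigr)\,ds',
\]
noting that the linear terms $C(s)f$ and $S(s)g$ cancel. Then I would repeat verbatim the Minkowski-plus-Strichartz chain of inequalities from the proof of Lemma~\ref{lem:mapK}, with $u(s',\cdot)^3$ replaced by the difference $u_n(s',\cdot)^3-u_{n-1}(s',\cdot)^3$. This produces
\[
\|u_{n+1}-u_n\|_X\lesssim \int_0^\infty \bigl\|u_n(s',\cdot)^3-u_{n-1}(s',\cdot)^3\bigr\|_{L^2(-1,1)}\,ds'.
\]

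Next, I would exploit the algebraic identity $a^3-b^3=(a-b)(a^2+ab+b^2)$ together with H\"older's inequality in $y$ (splitting $L^2$ as $L^6\cdot L^6\cdot L^6$) to bound the integrand by
\[
\|u_n(s',\cdot)-u_{n-1}(s',\cdot)\|_{L^6(-1,1)}\bigl(\|u_n(s',\cdot)\|_{L^6(-1,1)}^2+\|u_{n-1}(s',\cdot)\|_{L^6(-1,1)}^2\bigr).
\]
A second H\"older in $s'$ (splitting $L^1$ as $L^3\cdot L^3\cdot L^3$) then yields
\[
\|u_{n+1}-u_n\|_X\leq C_0\bigl(\|u_n\|_{L^3(0,\infty)L^6(-1,1)}^2+\|u_{n-1}\|_{L^3(0,\infty)L^6(-1,1)}^2\bigr)\|u_n-u_{n-1}\|_X \leq 2C_0(M\delta)^2\,\|u_n-u_{n-1}\|_X
\]
for some constant $C_0>0$ independent of $n$ and $\delta$.

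Finally, I would shrink $\delta$ (if necessary) so that $2C_0(M\delta)^2\leq\tfrac12$; this gives $\|u_{n+1}-u_n\|_X\leq 2^{-n}\|u_1-u_0\|_X$ and hence the Cauchy property, while preserving the condition $\delta\leq\delta_0$ required by Lemma~\ref{lem:mapK}. I do not expect a serious obstacle here: all the hard analytic content (the Strichartz estimate for $S(s)$) has already been absorbed into Lemma~\ref{lem:mapK} via Theorem~\ref{thm:main}, so the only item that needs care is the bookkeeping of H\"older exponents in space and time, which match precisely because the cubic nonlinearity is compatible with the Strichartz pair $(3,6)$.
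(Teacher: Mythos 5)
Your proposal is correct and follows essentially the same route as the paper: induction via Lemma~\ref{lem:mapK} for membership in $X_{M\delta}$, then the identity $a^3-b^3=(a-b)(a^2+ab+b^2)$ together with H\"older in $y$ and in $s'$ after pushing the Strichartz bound for $S(\cdot)$ through the Duhamel term, yielding a contraction factor of size $\simeq (M\delta)^2$ which is made $\le\tfrac12$ by shrinking $\delta$. The only (cosmetic) difference is that you spell out the H\"older splits $\tfrac12=\tfrac16+\tfrac16+\tfrac16$ and $1=\tfrac13+\tfrac13+\tfrac13$ explicitly; the paper applies H\"older directly to $\|(u_n-u_{n-1})(u_n^2+u_nu_{n-1}+u_{n-1}^2)\|_{L^2}$ and then again in $s'$, which amounts to the same computation.
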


\begin{proof}
  The first statement follows from Lemma \ref{lem:mapK}.
  The algebraic identity $a^3-b^3=(a-b)(a^2+ab+b^2)$ and H\"older's
  inequality yield
  \begin{align*}\|&u_{n+1}-u_n\|_{L^3(0,\infty)L^6(-1,1)} \\
    &= \|
    \mc K_{f,g}(u_{n})-\mc K_{f,g}(u_{n-1})\|_{L^3(0,\infty)L^6(-1,1)} \\
    &\lesssim \int_0^\infty \left \|S(s)\left
      [u_{n}(s',\cdot)^3-u_{n-1}(s',\cdot)^3\right ]\right
      \|_{L^3_s(0,\infty)L^6(-1,1)}ds' \\
    &\lesssim \int_0^\infty
      \|u_{n}(s',\cdot)^3-u_{n-1}(s',\cdot)^3\|_{L^2(-1,1)}ds' \\
    &\lesssim \int_0^\infty
      \|u_{n}(s',\cdot)-u_{n-1}(s',\cdot)\|_{L^6(-1,1)}
      \left
      (\|u_{n}(s',\cdot)\|_{L^6(-1,1)}^2+\|u_{n-1}(s',\cdot)\|_{L^6(-1,1)}^2
      \right )ds' \\
    &\lesssim \|u_{n}-u_{n-1}\|_{L^3(0,\infty)L^6(-1,1)}\left
      (\|u_{n}\|_{L^3(0,\infty)L^6(-1,1)}^2
      +\|u_{n-1}\|_{L^3(0,\infty)L^6(-1,1)}^2\right ).
  \end{align*}
  Analogously, we obtain the bound
  \begin{align*}\|
    &u_{n+1}-u_n\|_{L^\infty(0,\infty)L^6(-1,1)} \\
    &\lesssim \|u_{n}-u_{n-1}\|_{L^3(0,\infty)L^6(-1,1)}\left
      (\|u_{n}\|_{L^3(0,\infty)L^6(-1,1)}^2
    +\|u_{n-1}\|_{L^3(0,\infty)L^6(-1,1)}^2\right )
  \end{align*}
  and in summary,
 $\|u_{n+1}-u_n\|_X\leq CM^2\delta^2\|u_{n}-u_{n-1}\|_X$
  for some constant $C>0$. Thus, by choosing $\delta$ sufficiently
  small, we find $\|u_{n+1}-u_n\|_X\leq \frac12 \|u_{n}-u_{n-1}\|_X$
  for all $n\in \N$
  and this implies the claim.
\end{proof}

As a consequence of Lemma \ref{lem:Cauchy}, the sequence
$(u_n)_{n\in\N}$ converges to an element
\[ u\in C([0,\infty),L^6_\mathrm{odd}(-1,1))\cap
  L^3((0,\infty),L^6_\mathrm{odd}(-1,1))
  \cap L^\infty((0,\infty),L^6_\mathrm{odd}(-1,1)), \]
which satisfies Eq.~\eqref{eq:YMhypweak}.
It remains to prove the uniqueness.

\begin{lemma}
  \label{lem:YMunique}
  Let $f,g\in C^\infty([-1,1])$ be odd.
  Then there exists at most one function $u\in
  C([0,\infty),L^6_\mathrm{odd}(-1,1))$ that satisfies
  Eq.~\eqref{eq:YMhypweak}. 
\end{lemma}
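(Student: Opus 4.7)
The plan is a standard Gronwall-type contradiction argument, made possible by the fact that the Strichartz estimate from Theorem \ref{thm:main} together with Lemma \ref{lem:specYM} applies at the endpoint $(p,q)=(\infty,6)$, which gives the pointwise bound
\[ \|S(s)h\|_{L^6(-1,1)}\lesssim \|h\|_{L^2(-1,1)} \]
uniformly in $s\geq 0$.

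Suppose $u_1,u_2\in C([0,\infty),L^6_{\mathrm{odd}}(-1,1))$ both satisfy \eqref{eq:YMhypweak} with the same initial data $(f,g)$. Setting $w:=u_1-u_2$, the $C(s)f$ and $S(s)g$ pieces cancel and we are left with
\[ w(s,\cdot)=-\int_0^s S(s-s')\bigl(u_1(s',\cdot)^3-u_2(s',\cdot)^3\bigr)\,ds'. \]
Fix any $T>0$. Since $u_1,u_2$ are continuous in $s$ with values in $L^6_{\mathrm{odd}}$, the quantity
\[ M_T:=\sup_{s'\in [0,T]}\bigl(\|u_1(s',\cdot)\|_{L^6(-1,1)}^2+\|u_2(s',\cdot)\|_{L^6(-1,1)}^2\bigr) \]
is finite. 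Taking the $L^6$ norm in $y$, moving it inside the integral, and applying the above endpoint Strichartz bound yields, for $s\in [0,T]$,
\[ \|w(s,\cdot)\|_{L^6(-1,1)}\lesssim \int_0^s \|u_1(s',\cdot)^3-u_2(s',\cdot)^3\|_{L^2(-1,1)}\,ds'. \]

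The algebraic identity $a^3-b^3=(a-b)(a^2+ab+b^2)$ together with H\"older's inequality (exactly as in the proof of Lemma \ref{lem:Cauchy}) gives
\[ \|u_1(s',\cdot)^3-u_2(s',\cdot)^3\|_{L^2(-1,1)}\lesssim \|w(s',\cdot)\|_{L^6(-1,1)}\,M_T, \]
and combining the last two displays produces
\[ \|w(s,\cdot)\|_{L^6(-1,1)}\leq C\,M_T\int_0^s \|w(s',\cdot)\|_{L^6(-1,1)}\,ds'\qquad(s\in [0,T]). \]
Gronwall's inequality then forces $w\equiv 0$ on $[0,T]$, and since $T>0$ was arbitrary, $u_1=u_2$ on $[0,\infty)$.

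I do not expect a real obstacle here: the only point worth highlighting is the use of the $L^\infty_s L^6_y$ endpoint of the Strichartz estimate, which is what allows the integrand to be controlled pointwise in $s$ without invoking any mixed-norm integrability of $u_1,u_2$ beyond what the space $C([0,\infty),L^6_{\mathrm{odd}})$ provides. Everything else is the standard Picard-type uniqueness argument.
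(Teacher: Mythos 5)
Your proposal is correct and follows essentially the same argument as the paper: take the difference of two solutions (the linear terms cancel), use the endpoint bound $\|S(s)h\|_{L^6}\lesssim\|h\|_{L^2}$ together with the factorization $a^3-b^3=(a-b)(a^2+ab+b^2)$ and H\"older to obtain a Gronwall inequality on a bounded time interval, and conclude. The only cosmetic difference is that you name the constant $M_T$ explicitly, whereas the paper writes out the $L^\infty_{s}L^6_y$ norms inline.
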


\begin{proof}
  Suppose $u,\widetilde u\in C([0,\infty),L^6_\mathrm{odd}(-1,1))$
  satisfy Eq.~\eqref{eq:YMhypweak} and let $s_0>0$ be arbitrary. Then,
  for $s\in [0,s_0]$, we have
  \begin{align*}
    \|u(s,\cdot)-\widetilde u(s,\cdot)\|_{L^6(-1,1)}&\leq \int_0^s
    \left \|S(s-s')[u(s',\cdot)^3-\widetilde u(s',\cdot)^3]\right
    \|_{L^6(-1,1)}ds' \\
    &\lesssim \int_0^s \|u(s',\cdot)^3-\widetilde
      u(s',\cdot)^3\|_{L^2(-1,1)}ds' \\
    &\lesssim \left (\|u\|_{L^\infty(0,s_0)L^6(-1,1)}^2+\|\widetilde
      u\|_{L^\infty(0,s_0)L^6(-1,1)}^2
      \right ) \\
    &\quad \times \int_0^s \|u(s',\cdot)-\widetilde u(s',\cdot)\|_{L^6(-1,1)}ds'
  \end{align*}
and Gronwall's inequality implies that
$\|u(s,\cdot)-\widetilde u(s,\cdot)\|_{L^6(-1,1)}=0$ for all $s\in [0,s_0]$.
\end{proof}

\section{The hyperboloidal initial value problem for the free wave equation}

\noindent
Now we turn to the proof of Theorem \ref{thm:main} and
start with the hyperboloidal initial value problem for the free
wave equation, i.e., we study
\begin{equation}
  \label{eq:HIV}
  \begin{cases}
    [\partial_s^2+2y\partial_s\partial_y+\partial_s-(1-y^2)\partial_y^2+2y\partial_y]u(s,y)=0
  & (s,y)\in (0,\infty)\times (-1,1)
  \\
  u(s,y)=f(y),\qquad \partial_s u(s,y)=g(y) & (s,y)\in \{0\}\times (-1,1)
  \end{cases}
\end{equation}
for an unknown $u: [0,\infty)\times (-1,1)\to \R$ and given data $f,g: (-1,1)\to\R$.

\subsection{Classical solution of the initial value problem}
The solution to \eqref{eq:HIV} can be given explicitly. This is a
straightforward consequence of 
the fact that the general solution of Eq.~\eqref{eq:1dwave} is of
the form $v(t,x)=F(t-x)+G(t+x)$.

\begin{definition}
    For $f,g\in C^\infty(-1,1)$ and $(s,y)\in
    [0,\infty)\times (-1,1)$ we set
 \begin{align*}
u_{f,g}(s,y)&:=f(0)-\frac12\int_{-1+e^{-s}(1+y)}^0 (1-x)f'(x)dx
                                +\frac12\int_0^{1-e^{-s}(1-y)}(1+x)f'(x)dx  \\
    &\quad +\frac12\int_{-1+e^{-s}(1+y)}^{1-e^{-s}(1-y)}g(x)dx.
    \end{align*}
  \end{definition}

\begin{lemma}[Existence and uniqueness of smooth solutions]
  \label{lem:dAlembert}
  Let $f,g\in C^\infty(-1,1)$. Then
  $u_{f,g}\in C^\infty([0,\infty)\times (-1,1))$ and $u=u_{f,g}$
  is a solution to \eqref{eq:HIV}. Furthermore, this solution is
  unique in $C^\infty([0,\infty)\times (-1,1))$.
  \end{lemma}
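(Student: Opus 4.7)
The plan is to exploit the transformation $v(t,x):=u(t-\log\cosh x,\tanh x)$, which (as was computed earlier when deriving \eqref{eq:1dwavesy} from \eqref{eq:1dwave}) converts the hyperboloidal operator in \eqref{eq:HIV} to the free wave operator $\partial_t^2-\partial_x^2$. Since every smooth solution of the $1$D wave equation has the d'Alembert form $v(t,x)=F(t-x)+G(t+x)$, the strategy is to recognize $u_{f,g}$ as such a d'Alembert solution rewritten in hyperboloidal coordinates, and then read off all four claims (smoothness, the PDE, the initial conditions, uniqueness).

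Smoothness of $u_{f,g}$ on $[0,\infty)\times(-1,1)$ is immediate from differentiation under the integral sign: the integrands $(1\mp x)f'(x)$ and $g(x)$ are smooth on $(-1,1)$, and the limits $\alpha(s,y):=-1+e^{-s}(1+y)$ and $\beta(s,y):=1-e^{-s}(1-y)$ are smooth on $[0,\infty)\times(-1,1)$ with values in $(-1,1)$. The initial conditions are a short calculation: at $s=0$ one has $\alpha=\beta=y$, so combining the two $f'$-integrals yields $f(y)-f(0)$ while the $g$-integral vanishes, giving $u_{f,g}(0,y)=f(y)$; for $\partial_s u_{f,g}(0,y)$, Leibniz together with the identities $\partial_s\alpha=-(1+\alpha)$ and $\partial_s\beta=1-\beta$ produces a cancellation of the $f'$-terms and leaves $\tfrac12[(1-y)+(1+y)]g(y)=g(y)$.

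To verify the PDE, I would first observe that $\alpha$ depends only on $s-\log(1+y)$ and $\beta$ only on $s-\log(1-y)$. Splitting $\int_\alpha^\beta g=\int_0^\beta g-\int_0^\alpha g$ then rearranges the formula as
\[ u_{f,g}(s,y) = F(s-\log(1+y)) + G(s-\log(1-y)) \]
for explicit smooth $F, G$ on $(-\log 2,\infty)$. The function $v(t,x):=F(t-x)+G(t+x)$ solves the free wave equation on $\R^2$, and the chain rule linking \eqref{eq:1dwave} and \eqref{eq:1dwavesy} (which is invertible because $\Phi$ is a diffeomorphism) shows that its pullback $u(s,y)=v(s-\tfrac12\log(1-y^2),\artanh y)$ solves \eqref{eq:HIV}. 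A quick check identifies this pullback with $u_{f,g}$.

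Uniqueness is handled by transferring to the wave-equation side. For two smooth solutions of \eqref{eq:HIV}, their difference $w$ has vanishing Cauchy data at $s=0$, and $v(t,x):=w(t-\log\cosh x,\tanh x)$ is a smooth solution of $(\partial_t^2-\partial_x^2)v=0$ on the region $\{(t,x):t\geq\log\cosh x\}$ whose value and whose $\partial_t$-derivative both vanish on the spacelike hyperboloid $\{t=\log\cosh x\}$. Writing $v=F_0(t-x)+G_0(t+x)$ and substituting these two vanishing conditions along the hyperboloid forces $F_0'\equiv 0$, $G_0'\equiv 0$, and $F_0+G_0\equiv 0$ on the relevant intervals, so $v\equiv 0$ and hence $w\equiv 0$. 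The only mildly delicate step in the whole argument is the algebraic rearrangement that exposes the d'Alembert structure of $u_{f,g}$; everything else is routine chain rule plus standard Cauchy uniqueness for the $1$D wave equation on a spacelike slice.
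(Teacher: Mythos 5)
Your proposal is correct and follows the same route as the paper: recognize $u_{f,g}$ as a d'Alembert solution $F(t-x)+G(t+x)$ written in hyperboloidal coordinates, so that smoothness, the PDE, and the initial conditions all follow from the diffeomorphism $\Phi$, and so that uniqueness in the smooth class reduces to uniqueness of the d'Alembert representation with prescribed data on the spacelike initial surface. Where the paper dispatches uniqueness in one sentence (``the formula is derived from the general solution $v=F(t-r)+G(t+r)$, thus necessarily unique''), you carry out the computation explicitly -- taking a difference $w$, transferring to $(t,x)$, and showing that vanishing value and $\partial_t$-derivative on $\{t=\log\cosh x\}$ force $F_0',G_0'\equiv 0$ and the constants to cancel -- which is exactly the argument the paper is gesturing at.
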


  \begin{proof}
    Since
    $-1+e^{-s}(1+y)\in (-1,1)$ and
    $1-e^{-s}(1-y)\in (-1,1)$
    for all $s\geq 0$ and $y\in (-1,1)$,
it is evident that $u_{f,g}\in C^\infty([0,\infty)\times (-1,1))$ and a
straightforward computation shows that $u=u_{f,g}$ solves
\eqref{eq:HIV}.
In fact, the formula for $u_{f,g}$ is derived from the \emph{general}
solution $v(t,r)=F(t-r)+G(t+r)$ of Eq.~\eqref{eq:1dwave} and
thus, $u_{f,g}$ is necessarily unique in $C^\infty([0,\infty)\times (-1,1))$.
  \end{proof}

  \begin{lemma}[Boundedness of the energy]
    \label{lem:en0}
    Let $f,g\in C^\infty(-1,1)$ with $\|(f,g)\|_{\mc H}<\infty$. Then
    we have
    \[ \|(u_{f,g}(s,\cdot),\partial_s u_{f,g}(s,\cdot))\|_{\mc
        H}\lesssim \|(f,g)\|_{\mc H} \]
    for all $s\geq 0$.
  \end{lemma}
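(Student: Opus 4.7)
My plan is to bound the energy directly using the explicit solution formula in Lemma~\ref{lem:dAlembert}. Differentiating the formula for $u_{f,g}$ with respect to $s$ and $y$ gives
\[ \partial_s u_{f,g}(s,y)=\tfrac12\!\left[-(1-a^2)f'(a)+(1-b^2)f'(b)+(1+a)g(a)+(1-b)g(b)\right], \]
\[ \partial_y u_{f,g}(s,y)=\tfrac{e^{-s}}{2}\!\left[(1-a)f'(a)+(1+b)f'(b)+g(b)-g(a)\right], \]
where $a:=-1+e^{-s}(1+y)$ and $b:=1-e^{-s}(1-y)$. Applying $|\sum_{i=1}^{4}z_i|^2\leq 4\sum_i|z_i|^2$ and integrating, both $\|\partial_s u(s,\cdot)\|_{L^2(-1,1)}^2$ and $\int_{-1}^1 (1-y^2)|\partial_y u(s,y)|^2\,dy$ reduce to a finite sum of integrals of the form $\int w(s,y)|f'(a(s,y))|^2\,dy$ and $\int w(s,y)|g(a(s,y))|^2\,dy$, together with their $b$-analogues, each with an explicit polynomial weight $w$.

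The key step is to change variables $y\mapsto\alpha:=a(s,y)$ in each $a$-integral (Jacobian $e^s$, range $\alpha\in(-1,-1+2e^{-s})$), and symmetrically $y\mapsto\beta:=b(s,y)$ in each $b$-integral. On this range of $\alpha$ one has $1+\alpha\leq 2e^{-s}$, hence $1-\alpha^2\leq 4e^{-s}$, and from the identity $1-y^2=e^s(1+\alpha)(2-e^s(1+\alpha))$ one extracts $(1-y^2)e^{-s}\leq 2(1+\alpha)$. These elementary bounds are exactly what is needed to absorb both the Jacobian $e^s$ and the polynomial weights, so that each contribution collapses to a multiple of $\int_{-1}^1(1-\alpha^2)|f'(\alpha)|^2\,d\alpha$ or $\int_{-1}^1|g(\alpha)|^2\,d\alpha$. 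Summing the finitely many pieces yields $\|(u_{f,g}(s,\cdot),\partial_s u_{f,g}(s,\cdot))\|_{\mc H}\lesssim \|(f,g)\|_{\mc H}$ with a constant independent of $s$.

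The main obstacle is not analytic but bookkeeping: the weights $(1\pm a)$, $(1\pm b)$, $1-y^2$ have to be tracked through the change of variables to ensure that the $O(e^{-s})$ smallness of each weight precisely cancels the $e^s$ Jacobian. A conceptually cleaner alternative would be to derive the energy identity $\tfrac12 E'(s)=-|\partial_s u(s,1)|^2-|\partial_s u(s,-1)|^2$ by testing \eqref{eq:HIV} against $\partial_s u$ and integrating by parts: the boundary contributions $(1-y^2)\partial_s u\,\partial_y u\big|_{y=\pm1}$ are killed by the degenerate weight, while those from the $2y\partial_s\partial_y u$ term yield the displayed squared traces, and the remaining bulk pieces cancel. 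Monotonicity of $E$ would then finish the proof at once. However, this route requires first upgrading $u_{f,g}$ to a function with classical boundary traces at $y=\pm1$; this is automatic if $f,g\in C^\infty([-1,1])$, but for general data $(f,g)\in C^\infty(-1,1)\cap \mc H$ one needs a density argument complicated by the fact that the energy norm does not control $f$ itself pointwise near $y=\pm1$. For that reason I would favor the direct computation above.
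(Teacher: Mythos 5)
Your proof is correct, and since the paper dismisses this lemma with ``This is a simple exercise,'' you have in effect filled in the intended exercise. I checked the differentiated formulas for $\partial_s u_{f,g}$ and $\partial_y u_{f,g}$: with $\partial_s a=-(1+a)$, $\partial_s b=1-b$, and $\partial_y a=\partial_y b=e^{-s}$, both displayed expressions are exactly right. The change of variables $y\mapsto\alpha$ indeed has Jacobian $e^s$ with range $(-1,-1+2e^{-s})$, and the two weight inequalities $1-\alpha^2\le 4e^{-s}$ and $(1-y^2)e^{-s}=(1+\alpha)\bigl(2-e^{s}(1+\alpha)\bigr)\le 2(1+\alpha)$ are precisely what is needed: the first absorbs $e^s$ when one extra factor of $(1-a^2)$ (or $(1\pm a)$) is available, and the second converts the $(1-y^2)$ weight on the $\partial_y u$ term into the correct energy weight $(1+\alpha)$ while also eating the $e^{-s}$ from $\partial_y u$. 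Every contribution then lands on $\int(1-\alpha^2)|f'|^2$ or $\int|g|^2$, as you say. Your remark about the alternative route through the energy identity \eqref{eq:enid} is also accurate: that argument would give monotonicity (so a constant $1$), but it requires justifying the boundary traces at $y=\pm1$, which is exactly the reason the direct computation is the cleaner way to make the lemma rigorous for data in $C^\infty(-1,1)$ with merely $\|(f,g)\|_{\mc H}<\infty$.
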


  \begin{proof}
    This is a simple exercise.
  \end{proof}

\subsection{Solution for odd data and Strichartz estimates}
The existence of the constant finite-energy solution $u(s,y)=1$ precludes the
possibility of Strichartz estimates. Consequently, 
we restrict ourselves to odd data $f,g\in
C^\infty(-1,1)$.
Then the solution $u_{f,g}$ is given by
  \[ u_{f,g}(s,y)=\frac12\int_{1-e^{-s}(1+y)}^{1-e^{-s}(1-y)}\left
      [(1+x)f'(x)+g(x)\right ]dx. \]
The following simple Sobolev embedding shows that the energy is strong
enough to control $L^q$, provided $q<\infty$.

\begin{lemma}
  \label{lem:LpH1}
  Let $q\in [1,\infty)$. Then we have the bound
  \[ \|f\|_{L^q(-1,1)}\lesssim \left \|(1-|\cdot|^2)^\frac12 f'\right
    \|_{L^2(-1,1)} \]
  for all odd $f\in C^1(-1,1)$ such that the right-hand side is finite.
\end{lemma}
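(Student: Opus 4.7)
The key observation is that $f$ being odd implies $f(0)=0$, so by the fundamental theorem of calculus we have $f(y)=\int_0^y f'(t)\,dt$ for every $y\in(-1,1)$, and in particular $|f(y)|=|f(|y|)|\le \int_0^{|y|}|f'(t)|\,dt$. The lemma then reduces to a one-dimensional weighted Hardy-type inequality with the degenerate weight $(1-t^2)^{1/2}$ acting on $f'$, with all nontrivial behaviour concentrated near the endpoints $\pm 1$.

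To execute this, I would insert the weight and apply Cauchy--Schwarz:
\begin{align*}
|f(y)|^2 &\le \Bigl(\int_0^{|y|}(1-t^2)^{-1/2}(1-t^2)^{1/2}|f'(t)|\,dt\Bigr)^{2} \\
&\le \Bigl(\int_0^{|y|}\frac{dt}{1-t^2}\Bigr)\bigl\|(1-|\cdot|^2)^{1/2}f'\bigr\|_{L^2(-1,1)}^2 \\
&= \artanh|y|\cdot\bigl\|(1-|\cdot|^2)^{1/2}f'\bigr\|_{L^2(-1,1)}^2.
\end{align*}
Raising this pointwise bound to the power $q/2$ and integrating in $y$ then yields
\[
\|f\|_{L^q(-1,1)}^q\le \bigl\|(1-|\cdot|^2)^{1/2}f'\bigr\|_{L^2(-1,1)}^q\int_{-1}^1(\artanh|y|)^{q/2}\,dy.
\]

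The only remaining point is to verify that $\int_{-1}^1(\artanh|y|)^{q/2}\,dy<\infty$ for every $q\in[1,\infty)$, which is immediate from the asymptotic $\artanh|y|=\tfrac12\log\tfrac{1+|y|}{1-|y|}\sim-\tfrac12\log(1-|y|)$ as $|y|\to 1$ and the integrability of any power of a logarithm at the endpoint. There is no substantive obstacle; the only step where the argument genuinely breaks is at $q=\infty$, because the pointwise bound carries an unbounded factor $(\artanh|y|)^{1/2}$, which matches the explicit exclusion of $q=\infty$ in the statement.
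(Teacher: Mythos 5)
Your proof is correct and is essentially the same as the paper's: both exploit oddness to get $f(y)=\int_0^y f'$, insert the weight $(1-t^2)^{\pm 1/2}$, apply Cauchy--Schwarz, and conclude by the $L^q$-integrability of a power of $\log(1-|y|)$ near the endpoints (you evaluate the inner integral exactly as $\artanh|y|$, the paper just bounds it by $|\log(1-y^2)|+1$).
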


\begin{proof}
By the fundamental theorem of calculus and the oddness of $f$, we
infer
\[ f(y)=\int_0^y f'(x)dx \]
for all $y\in (-1,1)$ 
and Cauchy-Schwarz yields
\begin{align*}
|f(y)|&\leq \int_0^{|y|} |f'(x)|dx=\int_0^{|y|}
        (1-x^2)^{-\frac12}(1-x^2)^\frac12 |f'(x)|dx \\
  &\leq \left( \int_0^{|y|} (1-x^2)^{-1}dx\right )^\frac12 \left
    \|(1-|\cdot|^2)^\frac12 f'\right\|_{L^2(-1,1)}. 
\end{align*}
Since
\[ \int_0^{|y|} (1-x^2)^{-1}dx\lesssim |\log(1-y^2)|+1 \]
and the square root of the latter function belongs to $L^q(-1,1)$ for
any $q\in [1,\infty)$,
the stated bound follows.
\end{proof}
  
  \begin{proposition}[Strichartz estimates for the free equation]
\label{prop:Strich0}
    Let $p\in [2,\infty]$ and $q\in [1,\infty)$. Then we have the
  Strichartz estimates
  \[ \|u_{f,g}\|_{L^p(0,\infty)L^q(-1,1)}\lesssim \|(f,g)\|_{\mc H} \]
  for all odd $f,g\in C^\infty(-1,1)$ with
  $\|(f,g)\|_{\mc H}<\infty$.
  \end{proposition}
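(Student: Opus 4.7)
The plan is to reduce the two-dimensional mixed-norm estimate to a weighted bound on a single one-dimensional function. Starting from the explicit formula preceding the proposition, for odd data the substitution $x=1-e^{-t}$ allows one to rewrite
\[
  u_{f,g}(s,y) = F(\tau(s,y)) - F(\rho(s,y)) = \int_{\rho(s,y)}^{\tau(s,y)} F'(t)\,dt,
\]
where $\tau(s,y) := s-\log(1-y)$, $\rho(s,y) := s-\log(1+y)$, and
\[
  F(r) := \tfrac{1}{2} \int_0^{1-e^{-r}} [(1+x)f'(x)+g(x)]\,dx \quad\text{for } r\in[-\log 2,\infty).
\]
A direct change of variables $y=1-e^{-r}$ in the energy integral yields the key one-dimensional bound $\|F'\|_{L^2([-\log 2,\infty))} \lesssim \|(f,g)\|_{\mc H}$, so the task is to control $\|u\|_{L^p_sL^q_y}$ in terms of $\|F'\|_{L^2}$.

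From $|u(s,y)|\leq\int|F'(t)|\,\mathbf{1}_{t\in I(s,y)}\,dt$, where $I(s,y)$ is the interval between $\rho$ and $\tau$, Minkowski's integral inequality in $y$ gives
\[
  \|u(s,\cdot)\|_{L^q(-1,1)} \leq \int_{-\log 2}^\infty |F'(t)|\,\phi_s(t)^{1/q}\,dt, \qquad \phi_s(t):=|\{y\in(-1,1):t\in I(s,y)\}|.
\]
An elementary calculation gives $\phi_s(t)=2(1-|1-e^{s-t}|)^+$: it is bounded by $2$, supported in $|s-t|<\log 2$ on one side and exponentially decaying on the other, and satisfies $\sup_s\int_{-\log 2}^\infty \phi_s(t)^\gamma\,dt<\infty$ together with $\sup_t\int_0^\infty \phi_s(t)^\gamma\,ds<\infty$ for every $\gamma>0$.

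The Strichartz bound now follows from a weighted Cauchy--Schwarz in $t$ with a free parameter $\beta\in(0,2/q)$:
\[
  \|u(s,\cdot)\|_{L^q}^2 \leq \Big(\int \phi_s(t)^{2/q-\beta}\,dt\Big)\Big(\int |F'(t)|^2 \phi_s(t)^\beta\,dt\Big) \lesssim \int |F'(t)|^2 \phi_s(t)^\beta\,dt.
\]
For $p=\infty$ one bounds the remaining factor by $2^\beta\|F'\|_{L^2}^2$. For $p\in[2,\infty)$, one raises to the power $p/2\geq 1$ and applies Minkowski's integral inequality in $s$:
\[
  \Big\|\int |F'|^2 \phi_s^\beta\,dt\Big\|_{L^{p/2}_s} \leq \int |F'(t)|^2 \Big(\int_0^\infty \phi_s(t)^{\beta p/2}\,ds\Big)^{2/p}\,dt \lesssim \|F'\|_{L^2}^2.
\]
Combining everything gives $\|u\|_{L^p_sL^q_y} \lesssim \|F'\|_{L^2} \lesssim \|(f,g)\|_{\mc H}$.

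The main obstacle is the mixed-norm regime $p<q$, in particular $(p,q)=(2,q)$ with $q>2$, which is not accessible by real interpolation between $L^p_{s,y}$ bounds and the endpoint $L^\infty_sL^q_y$ (which is itself immediate from Lemmas \ref{lem:en0} and \ref{lem:LpH1}). The weighted Cauchy--Schwarz with free parameter $\beta$ is precisely what makes the two different integrability properties of $\phi_s$ (uniform in $t$ for each $s$, and uniform in $s$ for each $t$) usable simultaneously, closing this gap.
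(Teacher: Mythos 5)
Your proposal is correct, and it takes a genuinely different route from the paper's. The paper splits the estimate into the $f$-part and the $g$-part and treats them separately: for $f$ it performs Minkowski in $y$, bounds the resulting weight by $x^{1/q}$, performs a second Minkowski in $s$, and computes the inner $L^2_s$ norm explicitly; for $g$ it uses a pointwise bound with exponential decay $e^{-s/2}$; and the general $p\in[2,\infty]$ case is obtained from $p\in\{2,\infty\}$ by interpolation. You instead unify $f$ and $g$ into a single one-dimensional function $F$ via the substitution $x=1-e^{-t}$, so that $u(s,y)=F(\tau(s,y))-F(\rho(s,y))$ and $\|F'\|_{L^2}\lesssim\|(f,g)\|_{\mc H}$ captures the whole energy, and then reduce the problem to controlling the translation-invariant kernel $\phi_s(t)=\Psi(s-t)$ with $\Psi$ bounded and exponentially decaying. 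The weighted Cauchy--Schwarz with free parameter $\beta$ followed by Minkowski in $s$ handles all $p\in[2,\infty]$ at once. Both proofs use Minkowski in $y$ as the key step and are of comparable difficulty, but yours is more transparent structurally: since $\Psi^{1/q}\in L^1\cap L^\infty\subset L^b$ for every $b\in[1,\infty]$, the final step could even be replaced by a single application of Young's convolution inequality ($\|u\|_{L^p_sL^q_y}\leq \|F'\|_{L^2}\|\Psi^{1/q}\|_{L^b}$ with $1/b=1/p+1/2$), making the ``averaged dispersion'' mechanism behind the estimate explicit. One minor imprecision: the $L^\infty_s L^q_y$ endpoint, which you note also follows from Lemmas \ref{lem:en0} and \ref{lem:LpH1}, is what the paper actually uses for $p=\infty$; in your scheme the $\beta$-argument covers it as well, so this is merely a remark, not a gap.
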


  \begin{proof}
  The case $p=\infty$ is a consequence of Lemmas \ref{lem:en0} and
  \ref{lem:LpH1}. Thus, it suffices to prove the bound
  \[ \|u_{f,g}\|_{L^2(1,\infty)L^q(-1,1)}\lesssim \|(f,g)\|_{\mc H}. \]
 We first consider the case
    $g=0$.
   Then we have
   \begin{align*}
     |u_{f,0}(s,y)|&\lesssim
      \int_{1-e^{-s}(1+y)}^{1-e^{-s}(1-y)}|f'(x)|dx
                     =e^{-s}\int_{1-y}^{1+y}|f'(1-e^{-s}x)|dx \\
     &=\int_{0}^2 1_{[1-y,1+y]}(x)|e^{-s}f'(1-e^{-s}x)|dx
   \end{align*}
   for all $y\in [0,1)$ and thus, by Minkowski's inequality and the oddness of
   $u_{f,0}(s,\cdot)$,
   \begin{align*}
     \|u_{f,0}(s,\cdot)\|_{L^q(-1,1)}
     &\lesssim \left \|\int_{0}^2
       1_{[1-y,1+y]}(x)|e^{-s}
     f'(1-e^{-s}x)|dx\right \|_{L^q_y(0,1)} \\
     &\lesssim \int_{0}^2 \|1_{[1-y,1+y]}(x)\|_{L_y^q(0,1)}|e^{-s}f'(1-e^{-s}x)|dx.
   \end{align*}
Now note that $1_{[1-y,1+y]}(x)\leq 1_{[1-x,1]}(y)$ 
   for all $x\in [0,2]$ and $y\in [0,1]$.
This yields 
$\|1_{[1-y,1+y]}(x)\|_{L^q_y(0,1)}\lesssim x^{\frac{1}{q}}$
and thus,
\begin{equation*}
  \|u_{f,0}(s,\cdot)\|_{L^q(-1,1)}\lesssim
  \int_0^2 x^\frac{1}{q}|e^{-s}f'(1-e^{-s}x)|dx.
  \end{equation*}
Consequently, 
   \begin{align*}
     \|u_{f,0}\|_{L^2(1,\infty)L^q(-1,1)}
     &\lesssim \left \|\int_0^2
       x^\frac{1}{q}|e^{-s}f'(1-e^{-s}x)|dx\right\|_{L^2_s(1,\infty)}\\
       &\lesssim \int_0^2 x^{\frac{1}{q}}\|e^{-s}f'(1-e^{-s}x)\|_{L^2_s(1,\infty)}dx
   \end{align*}
   again by Minkowski's inequality.
   Now we have
   \begin{align*}
     \|e^{-s}f'(1-e^{-s}x)\|_{L^2_s(1,\infty)}^2
     &=\int_1^\infty |f'(1-e^{-s}x)|^2 e^{-2s}ds \\
     &=x^{-2}\int_{1-e^{-1}x}^1(1-\eta)|f'(\eta)|^2d\eta \\
     &\lesssim x^{-2}\|(f,0)\|_{\mc H}^2
   \end{align*}
for all $x\in (0,2]$ and in summary, we obtain
   \[ \|u_{f,0}\|_{L^2(1,\infty)L^q(-1,1)}\lesssim \|(f,0)\|_{\mc
       H}\int_{0}^2 x^{\frac{1}{q}-1}dx\lesssim \|(f,0)\|_{\mc
       H}. \]

   The case $f=0$ is much simpler and it suffices to note that
   \[ |u_{0,g}(s,y)|\leq \int_{1-e^{-s}(1+y)}^{1-e^{-s}(1-y)}
     |g(x)|dx\lesssim e^{-\frac12 s}\|g\|_{L^2(-1,1)}\lesssim
     e^{-\frac12 s}\|(0,g)\|_{\mc H} \]
   for all $s\geq 0$ and $y\in [0,1]$.
  \end{proof}

In particular, Proposition \ref{prop:Strich0} shows that the zero
solution is asymptotically stable under odd perturbations in the
energy space.

\subsection{Semigroup formulation}
For later purposes it is desirable to translate the results obtained
so far into semigroup language. First, we need to define proper
function spaces and operators.

\begin{definition}
  We set
  \[ \widetilde{\mc H}:=\{\mb f=(f_1,f_2)\in C^\infty([-1,1])\times C^\infty([-1,1]): \mb f
    \mbox{ is odd}\}. \]
  The vector space $\widetilde{\mc H}$ equipped with the inner
  product
  \[ (\mb f | \mb g)_{\mc H}:=\int_{-1}^1 (1-y^2)f_1'(y)
    \overline{g_1'(y)}dy
    +\int_{-1}^1 f_2(y)\overline{g_2(y)}dy \]
  is a pre-Hilbert space and we denote by $\mc H$ its
  completion.
  Furthermore, we consider the \emph{formal} differential expression
  \[ \mathfrak L_0\mb f(y):=
    \begin{pmatrix}
      f_2(y) \\
      (1-y^2)f_1''(y)-2yf_1'(y)-2yf_2'(y)-f_2(y)
    \end{pmatrix}
  \]
  and define the operator $\widetilde{\mb L}_0: \mc D(\widetilde{\mb
    L}_0)\subset \mc H\to\mc H$ by 
  $\mc D(\widetilde{\mb L}_0):=\widetilde{\mc H}$ and $\widetilde{\mb
    L}_0\mb f:=\mathfrak L_0 \mb f$.
\end{definition}
By construction, $\widetilde{\mb L}_0$ is a densely-defined
operator on $\mc H$.
With these definitions at hand, the initial value problem
\eqref{eq:HIV} can be written as
\[ \begin{cases}
    \partial_s\Phi(s)=\widetilde{\mb L}_0\Phi(s) & \mbox{for all }s>0 \\
    \Phi(0)=\mb f
  \end{cases}
\]
for $\Phi(s)=(u(s,\cdot),\partial_s u(s,\cdot))$ and $\mb
f=(f,g)$. The well-posedness of this initial value problem now means
that (the closure of) $\widetilde{\mb L}_0$ generates a semigroup.

\begin{lemma}
  \label{lem:S0}
  The operator $\widetilde{\mb L}_0: \mc D(\widetilde{\mb L}_0)\subset
  \mc H\to\mc H$ is closable and its closure $\mb L_0$ generates a
  strongly continuous one-parameter semigroup $\{\mb S_0(s)\in \mc
  B(\mc H): s\geq 0\}$. Furthermore, we have the estimate $\|\mb S_0(s)\mb f\|_{\mc
    H}\leq \|\mb f\|_{\mc H}$ for all $s\geq 0$ and all $\mb f\in \mc H$.
\end{lemma}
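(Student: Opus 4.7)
The plan is to construct $\mb S_0(s)$ by hand from the d'Alembert-type formula of Lemma \ref{lem:dAlembert} on the dense subspace $\widetilde{\mc H}$, verify the contraction semigroup axioms there, extend by density, and only at the end identify the generator with $\overline{\widetilde{\mb L}_0}$. For $\mb f=(f,g)\in\widetilde{\mc H}$ I would set $\mb T(s)\mb f:=(u_{f,g}(s,\cdot),\partial_s u_{f,g}(s,\cdot))$. The integration limits $1-e^{-s}(1\mp y)$ in the formula lie in $(-1,1)$ and depend smoothly on $(s,y)\in[0,\infty)\times[-1,1]$, so both components are $C^\infty([-1,1])$; oddness in $y$ is preserved because $u(s,y)\mapsto -u(s,-y)$ is a symmetry of \eqref{eq:HIV} and Lemma \ref{lem:dAlembert} supplies uniqueness. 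Hence $\mb T(s):\widetilde{\mc H}\to\widetilde{\mc H}$. Multiplying \eqref{eq:HIV} by $\overline{\partial_s u_{f,g}}$ and integrating, now rigorously, produces the energy identity \eqref{eq:enid},
\[
\tfrac12\tfrac{d}{ds}\|\mb T(s)\mb f\|_{\mc H}^2=-|\partial_s u_{f,g}(s,-1)|^2-|\partial_s u_{f,g}(s,1)|^2\le 0,
\]
from which $\|\mb T(s)\mb f\|_{\mc H}\le\|\mb f\|_{\mc H}$. By density $\mb T(s)$ extends to a contraction $\mb S_0(s)\in\mc B(\mc H)$ with $\|\mb S_0(s)\|\le 1$.

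The semigroup law $\mb S_0(s+t)=\mb S_0(s)\mb S_0(t)$ follows on $\widetilde{\mc H}$ from the uniqueness statement of Lemma \ref{lem:dAlembert} together with the invariance $\mb T(t)\widetilde{\mc H}\subset\widetilde{\mc H}$ (both sides solve \eqref{eq:HIV} with data $\mb T(t)\mb f$ at time $0$), and propagates to $\mc H$ by density. Strong continuity of $s\mapsto\mb S_0(s)\mb f$ at $s=0$ is immediate on $\widetilde{\mc H}$ from the joint smoothness of $u_{f,g}$, and reaches $\mc H$ by the standard $\varepsilon/3$ argument using the uniform bound $\|\mb S_0(s)\|\le 1$.

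To identify the generator $\mb L_0$, I would rewrite the PDE \eqref{eq:HIV} as the pointwise identity $\partial_s\Phi(s,y)=\mathfrak L_0\Phi(s,y)$ with $\Phi(s)=\mb T(s)\mb f$; a direct parity check shows $\mathfrak L_0$ sends $\widetilde{\mc H}$ into itself. Uniqueness then forces the commutation $\mathfrak L_0\mb T(s)\mb f=\mb T(s)\mathfrak L_0\mb f$, so the fundamental theorem of calculus in $\mc H$ yields
\[
\frac{\mb S_0(s)\mb f-\mb f}{s}=\frac{1}{s}\int_0^s \mb S_0(\sigma)\mathfrak L_0\mb f\,d\sigma\longrightarrow \mathfrak L_0\mb f=\widetilde{\mb L}_0\mb f
\]
as $s\to 0^+$, by strong continuity of $\sigma\mapsto\mb S_0(\sigma)\mathfrak L_0\mb f$. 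Hence $\widetilde{\mb L}_0\subset\mb L_0$, and since $\mb L_0$ is closed this already makes $\widetilde{\mb L}_0$ closable. Finally, because $\widetilde{\mc H}$ is dense in $\mc H$ and invariant under $\mb S_0(s)$, it is a core for $\mb L_0$ by the standard semigroup-theoretic core criterion, giving $\mb L_0=\overline{\widetilde{\mb L}_0}$.

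The hard step will be the upgrade of the pointwise identity $\partial_s\Phi=\mathfrak L_0\Phi$ to an $\mc H$-valued statement, i.e.\ the verification that $s\mapsto\mb T(s)\mb f$ is genuinely $C^1$ into $\mc H$ with derivative $\mb T(s)\mathfrak L_0\mb f$. This rests on the smoothness afforded by the explicit formula combined with the contraction bound applied to $\mathfrak L_0\mb f$; once it is in place, closability, the contraction estimate, and the identification $\mb L_0=\overline{\widetilde{\mb L}_0}$ all fall out of the classical Hille--Yosida framework.
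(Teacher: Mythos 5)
Your proposal is correct, but it takes a genuinely different route from the paper. The paper proves generation via the Lumer--Phillips theorem: it verifies dissipativity, $\Re\big(\widetilde{\mb L}_0\mb f\,\big|\,\mb f\big)_{\mc H}=-|f_2(-1)|^2-|f_2(1)|^2\leq 0$ (the infinitesimal form of the energy identity you invoke), and then shows $\rg(1-\widetilde{\mb L}_0)$ is dense by exhibiting an explicit preimage for any $\mb g\in\widetilde{\mc H}$; closability, generation, and the contraction bound then drop out of Lumer--Phillips in one stroke. You instead \emph{build} the semigroup from the d'Alembert formula of Lemma \ref{lem:dAlembert}, derive the contraction estimate from the integrated energy identity, extend by density, verify the semigroup law and strong continuity by hand, and finally pin down the generator through invariance of $\widetilde{\mc H}$ and the standard core criterion. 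Both are legitimate. Lumer--Phillips is more economical to write because it dispenses with the semigroup axioms and core identification entirely; the price is solving the resolvent equation explicitly. Your route is more constructive and makes the dynamics transparent, but it inherits the technical burden you yourself flag: upgrading the pointwise identity $\partial_s\Phi=\mathfrak L_0\Phi$ to strong $\mc H$-differentiability of $s\mapsto\mb T(s)\mb f$ \emph{before} you are entitled to use Hille--Yosida machinery (there is a mild chicken-and-egg in the FTC step as stated, since the identity $\mb S_0(s)\mb f-\mb f=\int_0^s\mb S_0(\sigma)\mathfrak L_0\mb f\,d\sigma$ presupposes exactly the $C^1$ regularity you are trying to establish; it is fixable by a direct estimate from the explicit formula, but needs to be carried out rather than deferred). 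The Lumer--Phillips route sidesteps that entirely, which is presumably why the authors chose it.
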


\begin{proof}
  A straightforward computation shows that
  \[ \Re \big (\widetilde{\mb L}_0\mb f\big |\mb f\big )_{\mc
      H}=-|f_2(-1)|^2-|f_2(1)|^2\leq 0 \]
  for all $\mb f=(f_1,f_2)\in \mc D(\widetilde{\mb L}_0)$ (this is
  just an instance of the energy identity Eq.~\eqref{eq:enid}). Furthermore, 
  for
  $\mb g=(g_1,g_2)\in \widetilde{\mc H}$ we set
  $g(y):=yg_1'(y)+g_1(y)+\frac12 g_2(y)$ and
  \begin{align*}
    f_1(y):=
    \frac{1}{1+y}\int_{-1}^y (1+x)g(x)dx
 +\frac{1}{1-y}\int_y^1 (1-x)g(x)dx.
  \end{align*}
  Note that $f_1$ is odd and belongs to $C^\infty([-1,1])$.
We set $\mb f:=(f_1,f_1-g_1)$.
Then we have $\mb f\in \mc D(\widetilde{\mb L}_0)$ and a
straightforward computation shows that $(1-\widetilde{\mb L}_0)\mb
f=\mb g$. Since $\mb g\in \widetilde{\mc H}$ was arbitrary, we see
that the range of $1-\widetilde{\mb L}_0$ is dense in $\mc H$ and an
application of the Lumer-Phillips theorem (see e.g.~\cite{EngNag00},
p.~83, Theorem 3.15) completes the proof.
\end{proof}

\begin{corollary}
  \label{cor:specL0}
We have $\sigma(\mb L_0)\subset \{z\in \C: \Re z\leq 0\}$.
\end{corollary}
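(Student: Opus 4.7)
The plan is to deduce this directly from Lemma \ref{lem:S0} via the standard relationship between the spectrum of a semigroup generator and the growth bound of the semigroup. Since Lemma \ref{lem:S0} establishes that $\mb L_0$ generates a strongly continuous semigroup $\{\mb S_0(s):s\geq 0\}$ satisfying $\|\mb S_0(s)\|_{\mc B(\mc H)}\leq 1=e^{0\cdot s}$, the growth bound of the semigroup is at most zero.

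To convert this into a spectral statement, I would explicitly construct the resolvent as a Laplace transform. For $\lambda\in\C$ with $\Re\lambda>0$ and $\mb f\in\mc H$, set
\[ R_\lambda \mb f:=\int_0^\infty e^{-\lambda s}\mb S_0(s)\mb f\,ds. \]
The integral converges as a Bochner integral in $\mc H$, because the contraction estimate yields $\|e^{-\lambda s}\mb S_0(s)\mb f\|_{\mc H}\leq e^{-\Re\lambda\,s}\|\mb f\|_{\mc H}$, which is integrable on $[0,\infty)$. Hence $R_\lambda\in\mc B(\mc H)$ with $\|R_\lambda\|\leq 1/\Re\lambda$.

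A standard computation using the strong continuity and the semigroup property shows that $R_\lambda$ maps $\mc H$ into $\mc D(\mb L_0)$ and satisfies $(\lambda-\mb L_0)R_\lambda=I$ as well as $R_\lambda(\lambda-\mb L_0)=I$ on $\mc D(\mb L_0)$; this is precisely the content of the Hille--Yosida theorem applied to the generator $\mb L_0$. Therefore every $\lambda$ with $\Re\lambda>0$ lies in the resolvent set $\rho(\mb L_0)$, which is equivalent to the claim $\sigma(\mb L_0)\subset\{z\in\C:\Re z\leq 0\}$. There is no genuine obstacle here: the whole corollary is really just a citation of the fact that the spectral bound of a contraction semigroup's generator is nonpositive, and at most one needs to remark that the resolvent formula above is justified by the contraction estimate from Lemma \ref{lem:S0}.
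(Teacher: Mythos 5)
Your proof is correct and follows exactly the same route as the paper: both deduce the spectral inclusion from the contraction estimate in Lemma \ref{lem:S0} together with the standard fact that the resolvent of a semigroup generator is given by the Laplace transform of the semigroup for $\Re\lambda$ larger than the growth bound. The paper simply cites this fact (\cite{EngNag00}, p.~55, Theorem 1.10), whereas you spell it out, but the underlying argument is identical.
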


\begin{proof}
  The statement is a
  consequence of the growth bound in Lemma \ref{lem:S0} and
  \cite{EngNag00}, p.~55, Theorem 1.10.
\end{proof}

\begin{remark}
  In fact, we have $\sigma_p(\mb L_0)=\{z\in \C: \Re z<0\}$ (and hence
  $\sigma(\mb L_0)=\{z\in \C: \Re z\leq 0\}$). This follows easily by noting
  that for any $\lambda\in \C$, the function $\mb
  f=(f_1,\lambda f_1)$ with
  \[ f_1(y):=(1+y)^{-\lambda}-(1-y)^{-\lambda} \]
  satisfies $(\lambda-\mathfrak L_0)\mb f=0$.
  However, we omit a formal proof of this result since it is not
  needed in the following.
\end{remark}

\section{The wave equation with a potential}

\noindent Now we move on to the main problem and add a potential $V\in C^\infty([-1,1])$.
In order to retain the parity symmetry, we require $V$ to be even.
That is to say, we study the initial value problem
\begin{equation}
  \label{eq:HIVV}
  \begin{cases}
    [\partial_s^2+2y\partial_s\partial_y+\partial_s-(1-y^2)\partial_y^2+2y\partial_y+V(y)]u(s,y)=0
  & (s,y)\in (0,\infty)\times (-1,1)
  \\
  u(s,y)=f(y),\qquad \partial_s u(s,y)=g(y) & (s,y)\in \{0\}\times (-1,1)
  \end{cases}.
\end{equation}
\subsection{Semigroup formulation}
We immediately switch to the semigroup picture. Note that by Lemma
\ref{lem:LpH1}, the operator $(f_1,f_2)\mapsto (0,-Vf_1)$ is bounded
on $\mc H$.

\begin{definition}
  Let $V\in C^\infty([-1,1])$ be even. Then we define the bounded
  operator $\mb L'_V: \mc H\to\mc H$ by
  \[ \mb L'_V \mb f:=
    \begin{pmatrix}
      0 \\ -Vf_1
    \end{pmatrix}.
  \]
  Furthermore, we set $\mb L_V:=\mb L_0+\mb L'_V$, where $\mb L_0$ is
  the closure of $\widetilde{\mb L}_0$, see Lemma \ref{lem:S0}.
\end{definition}
Eq.~\eqref{eq:HIVV} can be written as
\[
  \begin{cases}
    \partial_s\Phi(s)=\mb L_V\Phi(s) & \mbox{for all }s>0 \\
    \Phi(0)=(f,g)
  \end{cases}
\]
and the abstract theory immediately tells us that this initial value
problem is well-posed.

\begin{lemma}
  \label{lem:LV}
  The operator $\mb L_V: \mc D(\mb L_0)\subset\mc H\to\mc H$ generates
  a strongly continuous one-parameter semigroup $\{\mb S_V(s)\in \mc
  B(\mc H): s\geq 0\}$ and we have the bound
  \[ \|\mb S_V(s)\mb f\|_{\mc
    H}\leq e^{\|\mb L_V'\|s}\|\mb f\|_{\mc H}\] for all
  $s\geq 0$ and all $\mb f\in \mc H$.
\end{lemma}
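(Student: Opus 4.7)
The plan is to invoke the standard bounded perturbation theorem for strongly continuous semigroups, which is tailor-made for this situation: $\mb L_0$ already generates the contraction semigroup $\mb S_0$ by Lemma \ref{lem:S0}, and $\mb L_V'$ is a bounded operator on $\mc H$ (this was noted just before the definition of $\mb L_V'$; the key point is that $\|Vf_1\|_{L^2(-1,1)}\lesssim \|f_1\|_{L^2(-1,1)}\lesssim \|(f_1,f_2)\|_{\mc H}$ by Lemma \ref{lem:LpH1} together with the boundedness of $V$). Hence the sum $\mb L_V=\mb L_0+\mb L_V'$ with domain $\mc D(\mb L_0)$ generates a strongly continuous semigroup $\{\mb S_V(s)\}_{s\ge 0}$ by the bounded perturbation theorem (Engel-Nagel, Thm.~III.1.3).

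For the explicit growth bound, I would derive and exploit the Duhamel identity
\[
\mb S_V(s)\mb f=\mb S_0(s)\mb f+\int_0^s \mb S_0(s-t)\mb L_V'\mb S_V(t)\mb f\,dt,
\]
valid for all $\mb f\in\mc H$ and $s\ge 0$. Taking norms and using the contraction bound $\|\mb S_0(s)\|\le 1$ from Lemma \ref{lem:S0} together with $\|\mb L_V'\|<\infty$, I obtain
\[
\|\mb S_V(s)\mb f\|_{\mc H}\le \|\mb f\|_{\mc H}+\|\mb L_V'\|\int_0^s\|\mb S_V(t)\mb f\|_{\mc H}\,dt.
\]
An application of Gronwall's inequality then yields $\|\mb S_V(s)\mb f\|_{\mc H}\le e^{\|\mb L_V'\|s}\|\mb f\|_{\mc H}$, as claimed. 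Alternatively, one can write down the Dyson series
\[
\mb S_V(s)=\sum_{n=0}^\infty \mb S_n(s),\qquad \mb S_{n+1}(s)=\int_0^s\mb S_0(s-t)\mb L_V'\mb S_n(t)\,dt,
\]
and verify by induction that $\|\mb S_n(s)\|\le \|\mb L_V'\|^n s^n/n!$, so that summing gives the same exponential bound directly.

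There is essentially no obstacle here: once the boundedness of $\mb L_V'$ is in place, the result is a textbook consequence of semigroup theory. The only mild point worth being careful about is that $\mb L_0$ is defined as the closure of $\widetilde{\mb L}_0$, so its domain is not explicit, but this is irrelevant because the perturbation theorem and the Duhamel identity only use that $\mb L_V'\in\mc B(\mc H)$ and that $\mb L_0$ is a generator on all of $\mc H$.
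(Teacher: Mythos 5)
Your proposal is correct and matches the paper's approach: both invoke the bounded perturbation theorem (Engel-Nagel, Thm.~III.1.3), which, given that $\mb S_0$ is a contraction semigroup and $\mb L_V'$ is bounded, already yields the generation statement together with the growth bound $\|\mb S_V(s)\|\le e^{\|\mb L_V'\|s}$. The Duhamel/Gronwall and Dyson-series arguments you add merely unfold the proof of that theorem, so there is no substantive difference.
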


\begin{proof}
The statement is a consequence of the bounded
perturbation theorem, see e.g.~\cite{EngNag00}, p.~158, Theorem 1.3.
\end{proof}

\subsection{Analysis of the generator}
In order to relate the semigroup formulation to the classical picture,
we need some technical results on the generator $\mb L_V$. The point is
that the latter 
is only abstractly defined as the closure of $\widetilde
{\mb L}_V:=\widetilde{\mb L}_0+\mb L_V'$.

\begin{lemma}
  \label{lem:Winf}
  Let $n\in \N$, $\delta\in (0,1)$, and
  $I_\delta:=(-1+\delta,1-\delta)$. Then we have the bound
  \[
    \|f_1\|_{W^{n,\infty}(I_\delta)}+\|f_2\|_{W^{n-1,\infty}(I_\delta)}\lesssim
    \|\mb f\|_{\mc H}+\|\widetilde{\mb L}_V^n\mb f \|_{\mc H} \]
  for all $\mb f=(f_1,f_2)\in \widetilde{\mc H}$.
\end{lemma}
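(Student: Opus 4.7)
The plan is an induction on $n\ge 1$, exploiting the fact that on $I_\delta$ the weight $1-y^2$ is bounded below by a positive constant $c_\delta$; this makes the weights appearing in $\|\cdot\|_{\mc H}$ inessential on $I_\delta$, and turns the second component of $\widetilde{\mb L}_V\mb f$ into a uniformly elliptic second-order ODE for $f_1$.

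For the base case $n=1$, the norm $\|\mb f\|_{\mc H}$ directly controls $\|f_1'\|_{L^2(I_\delta)}$ and $\|f_2\|_{L^2(I_\delta)}$. Writing $(g_1,g_2):=\widetilde{\mb L}_V\mb f$, the identity $g_1=f_2$ together with the contribution of the first component of $\widetilde{\mb L}_V\mb f$ to $\|\widetilde{\mb L}_V\mb f\|_{\mc H}$ gives $\|f_2'\|_{L^2(I_\delta)}\lesssim \|\widetilde{\mb L}_V\mb f\|_{\mc H}$, and rearranging the second component as
\[ f_1'' = \frac{1}{1-y^2}\bigl[g_2 + 2yf_1' + 2yf_2' + f_2 + Vf_1\bigr] \]
and dividing by $1-y^2\ge c_\delta$ produces $\|f_1''\|_{L^2(I_\delta)}\lesssim \|\mb f\|_{\mc H}+\|\widetilde{\mb L}_V\mb f\|_{\mc H}$. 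The one-dimensional Sobolev embeddings $H^1\hookrightarrow L^\infty$ and $H^2\hookrightarrow W^{1,\infty}$ then close the base case.

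For the inductive step, set $\mb g := \widetilde{\mb L}_V\mb f$, which again lies in $\widetilde{\mc H}$ since $V$ is even and $\widetilde{\mb L}_V$ preserves smoothness and oddness; note that $\widetilde{\mb L}_V^{n-1}\mb g=\widetilde{\mb L}_V^n\mb f$. Apply the inductive hypothesis at level $n-1$ to $\mb g$ to obtain
\[ \|g_1\|_{W^{n-1,\infty}(I_\delta)}+\|g_2\|_{W^{n-2,\infty}(I_\delta)}\lesssim \|\widetilde{\mb L}_V\mb f\|_{\mc H}+\|\widetilde{\mb L}_V^n\mb f\|_{\mc H}; \]
since $g_1=f_2$, this supplies the required bound on $\|f_2\|_{W^{n-1,\infty}(I_\delta)}$. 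For $f_1$, I would differentiate the displayed ODE $n-2$ times on $I_\delta$, which expresses $f_1^{(n)}$ as a finite sum, with smooth coefficients, of derivatives of $g_2$ up to order $n-2$, derivatives of $g_1$ up to order $n-1$, and derivatives of $f_1$ up to order $n-1$. The first two families are controlled by the inductive hypothesis applied to $\mb g$, while the third is controlled by the inductive hypothesis applied to $\mb f$ itself at level $n-1$.

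The main obstacle is that this procedure introduces the intermediate norms $\|\widetilde{\mb L}_V\mb f\|_{\mc H}$ and $\|\widetilde{\mb L}_V^{n-1}\mb f\|_{\mc H}$ on the right-hand side, whereas the lemma only permits $\|\mb f\|_{\mc H}+\|\widetilde{\mb L}_V^n\mb f\|_{\mc H}$. I would remove them via a moment inequality: by Lemma \ref{lem:LV} the shifted operator $\mb L_V-\omega$ with $\omega:=\|\mb L'_V\|$ generates a contraction semigroup on $\mc H$, so the Kallman--Rota inequality, iterated, gives
\[ \|(\mb L_V-\omega)^k\mb f\|_{\mc H}\lesssim \|\mb f\|_{\mc H}^{1-k/n}\,\|(\mb L_V-\omega)^n\mb f\|_{\mc H}^{k/n} \qquad (0\le k\le n); \]
expanding the binomials on both sides and using $\mb L_V^k\mb f=\widetilde{\mb L}_V^k\mb f$ on the smooth subspace $\widetilde{\mc H}$ converts this into $\|\widetilde{\mb L}_V^k\mb f\|_{\mc H}\lesssim \|\mb f\|_{\mc H}+\|\widetilde{\mb L}_V^n\mb f\|_{\mc H}$ for all $0\le k\le n$, and absorbs the intermediate terms.
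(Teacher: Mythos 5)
Your proposal reproduces the paper's base case $n=1$ essentially verbatim: oddness plus Cauchy--Schwarz for the $L^\infty$ bounds on $f_1, f_2$, solving the second component of $\widetilde{\mb L}_V\mb f$ for $f_1''$, and the one-dimensional Sobolev embedding. The paper then simply states ``This settles the case $n=1$ and from here we proceed inductively'' without detailing the induction. You correctly identify the subtlety that this glosses over: applying the inductive hypothesis at level $n-1$ to $\mb g:=\widetilde{\mb L}_V\mb f$ and to $\mb f$ itself, and differentiating the ODE $n-2$ times, yields the stated $W^{n,\infty}\times W^{n-1,\infty}$ control but with the full ladder of graph norms $\sum_{k=0}^n\|\widetilde{\mb L}_V^k\mb f\|_{\mc H}$ on the right, whereas the lemma permits only the two endpoint terms. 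Your proposal to collapse the ladder via the Kallman--Rota moment inequality for the contraction semigroup generated by $\mb L_V-\|\mb L_V'\|$ (which is justified by Lemma \ref{lem:LV}) is a clean and correct way to close this gap, and it is also consistent with how Lemma \ref{lem:Winf} is later used in Corollary \ref{cor:regL0}, where only $\|\mb f_k\|_{\mc H}$ and $\|\widetilde{\mb L}_V^n\mb f_k\|_{\mc H}$ are assumed Cauchy. One minor point worth flagging: the step ``expanding the binomials on both sides'' is not a purely algebraic conversion. Writing $A:=\mb L_V-\omega$, one first uses the Young form of the moment inequality to get $\|A^j\mb f\|_{\mc H}\le\varepsilon\|A^n\mb f\|_{\mc H}+C_\varepsilon\|\mb f\|_{\mc H}$ for $j<n$, substitutes this into $A^n=\sum_j\binom{n}{j}(-\omega)^{n-j}\mb L_V^j$ and $\mb L_V^n=\sum_j\binom{n}{j}\omega^{n-j}A^j$, and then absorbs the small multiple of $\|A^n\mb f\|_{\mc H}$ to obtain the equivalence $\|\mb f\|_{\mc H}+\|A^n\mb f\|_{\mc H}\simeq\|\mb f\|_{\mc H}+\|\mb L_V^n\mb f\|_{\mc H}$. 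With that absorption spelled out, your argument is complete and matches the intent of the paper's omitted inductive step.
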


\begin{proof}
   Since $f_1$ and $f_2$ are odd, we have
  $f_j(y)=\int_0^y f_j'(x)dx$, $j\in \{1,2\}$, and Cauchy-Schwarz yields
  \begin{align*}
    \|f_1\|_{L^\infty(I_\delta)}
    &\lesssim
    \|f_1'\|_{L^2(I_\delta)}\lesssim \|\mb f\|_{\mc H} \\
    \|f_2\|_{L^\infty(I_\delta)}
    &\lesssim
    \|f_2'\|_{L^2(I_\delta)}=\|[\widetilde{\mb L}_V\mb
      f]_1'\|_{L^2(I_\delta)}\lesssim
      \|\widetilde{\mb L}_V\mb f\|_{\mc
      H}.
      \end{align*}
  Furthermore,
  \[ f_1''(y)=\frac{1}{1-y^2}\left [[\widetilde{\mb L}_V\mb
      f]_2(y)+2yf_1'(y)+2yf_2'(y)+f_2(y)+V(y)f_1(y)\right] \]
  and thus, by the one-dimensional Sobolev embedding,
  \[ \|f_1'\|_{L^\infty(I_\delta)}\lesssim \|f_1'\|_{L^2(I_\delta)}+\|f_1''\|_{L^2(I_\delta)}
    \lesssim \|\mb f\|_{\mc H}+\|\widetilde{\mb L}_V\mb f\|_{\mc H}.
  \]
  This settles the case $n=1$ and from here we proceed inductively.
\end{proof}

\begin{corollary}
  \label{cor:regL0}
  Let $n\in \N$. If $\mb f\in \mc D(\mb L_V^n)$ then $\mb f$ can be
  identified with
  an odd function in 
  $C^n(-1,1)\times C^{n-1}(-1,1)$.
\end{corollary}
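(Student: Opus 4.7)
My plan is to establish the corollary by induction on $n$, using Lemma \ref{lem:Winf} as the base case and then exploiting the ODE structure of $\mathfrak L_V$ to bootstrap regularity in the inductive step.

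For the base case $n=1$, let $\mb f\in\mc D(\mb L_V)$. By the very definition of the closure, there exists a sequence $(\mb f_k)\subset\widetilde{\mc H}$ with $\mb f_k\to\mb f$ and $\widetilde{\mb L}_V\mb f_k\to\mb L_V\mb f$ in $\mc H$. Applying the inequality of Lemma \ref{lem:Winf} with $n=1$ to $\mb f_k-\mb f_j$, the right-hand side tends to zero, so $((\mb f_k)_1)$ is Cauchy in $W^{1,\infty}(I_\delta)$ and $((\mb f_k)_2)$ in $L^\infty(I_\delta)$ for every $\delta\in(0,1)$. Comparing the uniform limits with the $\mc H$-limit, $\mb f$ is represented on each $I_\delta$ by an odd pair in $C^1(\overline{I_\delta})\times C(\overline{I_\delta})$; since $\delta$ was arbitrary, $\mb f\in C^1(-1,1)\times C(-1,1)$.

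For the inductive step, assume the result for $n-1$ and let $\mb f\in\mc D(\mb L_V^n)$. Then both $\mb f$ and $\mb L_V\mb f$ lie in $\mc D(\mb L_V^{n-1})$, so by the inductive hypothesis they are represented by odd pairs in $C^{n-1}(-1,1)\times C^{n-2}(-1,1)$. The key step is to identify the abstractly defined $\mb L_V\mb f$ with the formal expression $\mathfrak L_V\mb f$ in the sense of distributions on $(-1,1)$: one picks $(\mb f_k)\subset\widetilde{\mc H}$ approximating $\mb f$ in the graph norm of $\mb L_V$, tests the identity $\widetilde{\mb L}_V\mb f_k=\mathfrak L_V\mb f_k$ against $\phi\in C_c^\infty(-1,1)$, integrates by parts to move the derivatives of $\mathfrak L_V$ onto $\phi$, and passes to the limit using the $\mc H$-convergences $\mb f_k\to\mb f$ and $\widetilde{\mb L}_V\mb f_k\to\mb L_V\mb f$. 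This yields $\mathfrak L_V\mb f=\mb L_V\mb f$ in $\mc D'(-1,1)$. In particular, the first components give $f_2=[\mb L_V\mb f]_1$ as functions, upgrading $f_2$ from $C^{n-2}$ to $C^{n-1}$.

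With this identification in hand, the second component of the identity reads
\[
(1-y^2)f_1''=[\mb L_V\mb f]_2+2yf_1'+2yf_2'+f_2+Vf_1
\]
distributionally. On each $I_\delta$ the coefficient $1-y^2$ is bounded away from zero, so one may divide; the right-hand side lies in $C^{n-2}(I_\delta)$ since $f_1'\in C^{n-2}$, $f_2'\in C^{n-2}$ (exploiting the upgraded regularity of $f_2$), $f_2\in C^{n-1}$, $Vf_1\in C^{n-1}$, and $[\mb L_V\mb f]_2\in C^{n-2}$. Hence $f_1''\in C^{n-2}(I_\delta)$, so $f_1\in C^n(I_\delta)$, and since $\delta>0$ was arbitrary, $f_1\in C^n(-1,1)$ and $f_2\in C^{n-1}(-1,1)$. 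The main obstacle I expect is the distributional identification $\mb L_V\mb f=\mathfrak L_V\mb f$, which requires carefully justifying the limit passage from the abstract closure; once this is handled, the rest is a routine ODE bootstrap based on the already-established estimate of Lemma \ref{lem:Winf}.
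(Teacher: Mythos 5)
Your proof is correct, but it takes a different route from the paper's. The paper's own proof is a one-step application of Lemma~\ref{lem:Winf}: it takes a sequence $(\mb f_k)\subset\widetilde{\mc H}$ with $\mb f_k\to\mb f$ and $\widetilde{\mb L}_V^n\mb f_k\to\mb L_V^n\mb f$ in $\mc H$, applies the estimate of Lemma~\ref{lem:Winf} (which is already stated for general $n$) to differences $\mb f_k-\mb f_j$, and concludes that $(\mb f_k)$ is Cauchy in $W^{n,\infty}(I_\delta)\times W^{n-1,\infty}(I_\delta)$, hence converges locally to a $C^n\times C^{n-1}$ representative of $\mb f$. You instead use Lemma~\ref{lem:Winf} only for $n=1$ as a base case and build the general case inductively via an ODE bootstrap: identify $\mb L_V\mb f=\mathfrak L_V\mb f$ distributionally, read off the improved regularity of $f_2$ from the first component, and divide by $1-y^2$ in the second component to upgrade $f_1$. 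This essentially re-runs inline the induction that the paper has already placed inside the proof of Lemma~\ref{lem:Winf}, so it is a bit redundant relative to the paper's organization, but it is self-contained, and it yields the distributional identity $\mb L_V\mb f=\mathfrak L_V\mb f$ on $\mc D(\mb L_V)$ as a byproduct (the paper establishes the analogous fact separately in Corollary~\ref{cor:actL0}). One modest advantage of your route is that it only invokes $\widetilde{\mc H}$ being a core for $\mb L_V$ itself (which is immediate from the definition of $\mb L_V$ as the closure of $\widetilde{\mb L}_V$), whereas the paper's argument implicitly relies on $\widetilde{\mc H}$ also being a core for the higher powers $\mb L_V^n$ in order to produce the approximating sequence with $\widetilde{\mb L}_V^n\mb f_k\to\mb L_V^n\mb f$; your induction reduces that requirement to the base case.
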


\begin{proof}
  Fix $\delta\in (0,1)$, set $I_\delta:=(-1+\delta,1-\delta)$,
  and let $\mb f\in \mc D(\mb L_V^n)$. Then there exists a sequence $(\mb
f_k)_{k\in \N}\subset \widetilde{\mc H}$ such that $\mb
f_k\to \mb f$ and $\widetilde{\mb L}_V^n\mb f_k\to \mb L_V^n\mb f$. In
particular, $(\mb f_k)_{k\in\N}$ and $(\widetilde{\mb L}_V^n\mb
f_k)_{k\in\N}$ are Cauchy sequences with respect to $\|\cdot\|_{\mc
  H}$. By Lemma \ref{lem:Winf} we see that $\mb f_k$ converges
to an odd
function in $C^n(I_\delta)\times C^{n-1}(I_\delta)$. 
Since $\delta\in (0,1)$ was arbitrary, $(\mb f_k)_{k\in \N}$ converges
pointwise on $(-1,1)$ to an odd function in $C^1(-1,1)\times C(-1,1)$,
which may be identified with $\mb f$.
\end{proof}

\begin{remark}
  \label{rem:identify}
From now on we will implicitly make the identification suggested in Corollary
\ref{cor:regL0}. Consequently, any $\mb f\in \mc D(\mb L_V^n)$ \emph{is}
an odd function in $C^n(-1,1)\times C^{n-1}(-1,1)$ and we have the
inclusion $\mc D(\mb
L_V^n)\subset C^n(-1,1)\times C^{n-1}(-1,1)$.
\end{remark}

\begin{corollary}
  \label{cor:actL0}
  On $\mc D(\mb L_0^2)$, $\mb L_0$ acts as a classical differential
  operator, i.e., if $\mb f\in \mc D(\mb L_0^2)\subset C^2(-1,1)\times
  C^1(-1,1)$, we have $\mb L_0\mb f=\mathfrak L_0\mb f$
  on $(-1,1)$.
\end{corollary}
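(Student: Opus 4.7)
The plan is to realize $\mb L_0 \mb f$ as the pointwise limit of $\mathfrak L_0 \mb f_k$ along an approximating sequence $(\mb f_k) \subset \widetilde{\mc H}$, which turns the statement into the trivial observation that $\widetilde{\mb L}_0 = \mathfrak L_0$ pointwise on $\widetilde{\mc H}$.

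Concretely, given $\mb f \in \mc D(\mb L_0^2)$, I would first produce (as in the proof of Corollary \ref{cor:regL0}) a sequence $(\mb f_k) \subset \widetilde{\mc H}$ with $\mb f_k \to \mb f$, $\widetilde{\mb L}_0 \mb f_k \to \mb L_0 \mb f$ and $\widetilde{\mb L}_0^2 \mb f_k \to \mb L_0^2 \mb f$, all in $\mc H$. Then, for every $\delta \in (0,1)$ with $I_\delta := (-1+\delta,1-\delta)$, I would apply Lemma \ref{lem:Winf} with $V = 0$ and $n = 2$ to the differences $\mb f_k - \mb f_j$, obtaining
\begin{equation*}
  \|f_{1,k} - f_{1,j}\|_{W^{2,\infty}(I_\delta)} + \|f_{2,k} - f_{2,j}\|_{W^{1,\infty}(I_\delta)} \lesssim \|\mb f_k - \mb f_j\|_{\mc H} + \|\widetilde{\mb L}_0^2(\mb f_k - \mb f_j)\|_{\mc H},
\end{equation*}
whose right-hand side tends to zero. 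Hence $(f_{1,k})$ is Cauchy in $C^2(\overline{I_\delta})$ and $(f_{2,k})$ in $C^1(\overline{I_\delta})$, and by uniqueness of limits the limits agree with the continuous representatives of $\mb f$ furnished by Corollary \ref{cor:regL0} and Remark \ref{rem:identify}.

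From this locally uniform $C^2 \times C^1$ convergence one gets $\mathfrak L_0 \mb f_k \to \mathfrak L_0 \mb f$ uniformly on $\overline{I_\delta}$. Since $\widetilde{\mb L}_0 \mb f_k = \mathfrak L_0 \mb f_k$ pointwise for each $k$, and since $\widetilde{\mb L}_0 \mb f_k \to \mb L_0 \mb f$ in $\mc H$ (so a subsequence converges a.e.\ on $(-1,1)$), the two limits must coincide on $I_\delta$. Letting $\delta \to 0$ yields $\mb L_0 \mb f = \mathfrak L_0 \mb f$ on all of $(-1,1)$. The only delicate point is the existence of the common approximating sequence, which is not immediate from the bare definition of the closure of $\widetilde{\mb L}_0$ but can be arranged by a standard core/density argument (for instance via semigroup regularization $\mb f \mapsto \frac{1}{\epsilon}\int_0^\epsilon \mb S_0(s)\mb f\,ds$ combined with the density of $\widetilde{\mc H}$ in the graph norm); this is the same implicit input that underlies Corollary \ref{cor:regL0}, and granted it everything else is a soft limiting argument powered by the interior a priori estimate of Lemma \ref{lem:Winf}.
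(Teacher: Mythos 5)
Your argument is essentially the same as the paper's: take an approximating sequence in $\widetilde{\mc H}$ converging to $\mb f$ in the graph norm of $\mb L_0^2$, invoke Lemma~\ref{lem:Winf} (with $V=0$, $n=2$) to upgrade this to locally uniform $C^2\times C^1$ convergence, and identify the pointwise limit of $\mathfrak L_0\mb f_k=\widetilde{\mb L}_0\mb f_k$ with $\mb L_0\mb f$. You are right that the existence of a common approximating sequence for $\mb f$, $\mb L_0\mb f$, $\mb L_0^2\mb f$ is the one point the paper leaves implicit (it amounts to $\widetilde{\mc H}$ being a core for $\mb L_0^2$, which follows from the $\mb S_0(s)$-invariance of $\widetilde{\mc H}$ visible from the explicit solution formula), and flagging it is a welcome addition rather than a deviation.
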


\begin{proof}
  Let $\mb f\in \mc D(\mb L_0^2)$. Then there exists a sequence $(\mb
  f_k)_{k\in \N}$ with $\widetilde{\mb L}_0^n\mb f_k\to\mb
  L_0^n \mb f$ for $n\in \{0,1,2\}$. By the definition of $\widetilde{\mb L}_0$ and Lemma
  \ref{lem:Winf}, we see that $(\mathfrak L_0\mb f_k)_{k\in\N}$ converges
  pointwise on $(-1,1)$ to $\mathfrak L_0 \mb f\in C^1(-1,1)\times
  C(-1,1)$, and the latter function is identified
  with $\mb L_0\mb f$.
\end{proof}

\begin{corollary}
  Let $\mb f=(f,g)\in \widetilde{\mc H}$ and set $u(s,\cdot):=[\mb
  S_0(s)\mb f]_1$. Then we have $u=u_{f,g}$.
\end{corollary}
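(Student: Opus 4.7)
The plan is to exploit the uniqueness of classical solutions to the abstract Cauchy problem generated by $\mb L_0$. I would define $\Psi:[0,\infty)\to\mc H$ by
\[ \Psi(s):=(u_{f,g}(s,\cdot),\partial_s u_{f,g}(s,\cdot)) \]
and show that $\Psi$ is a classical solution of $\Phi'(s)=\mb L_0\Phi(s)$, $\Phi(0)=\mb f$. Since $\mb f\in\widetilde{\mc H}\subset\mc D(\mb L_0)$, the semigroup orbit $s\mapsto\mb S_0(s)\mb f$ is the unique such classical solution (a standard property of $C_0$-semigroups, see e.g.~\cite{EngNag00}), and comparing first components will then yield $u(s,\cdot)=u_{f,g}(s,\cdot)$.

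First I would verify that $\Psi(s)\in\widetilde{\mc H}$ for every $s\geq 0$. Smoothness of $u_{f,g}(s,\cdot)$ and of $\partial_s u_{f,g}(s,\cdot)$ up to $y=\pm 1$ is immediate from the explicit formula, because the integrand $(1+x)f'(x)+g(x)$ lies in $C^\infty([-1,1])$ and the integration bounds are smooth functions of $(s,y)$. Oddness in $y$ follows either from the reflection symmetry $y\mapsto -y$ of the interval $[1-e^{-s}(1+y),1-e^{-s}(1-y)]$ combined with the parity of the integrand, or more cleanly by observing that $(s,y)\mapsto -u_{f,g}(s,-y)$ solves the same initial value problem \eqref{eq:HIV} and appealing to the uniqueness part of Lemma \ref{lem:dAlembert}. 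Next, because $u_{f,g}$ is a classical solution of \eqref{eq:HIV}, solving the PDE for $\partial_s^2 u_{f,g}$ and comparing with the definition of $\mathfrak L_0$ gives the pointwise identity $\partial_s\Psi(s)=\widetilde{\mb L}_0\Psi(s)$ on $(-1,1)$.

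The main technical step is to upgrade this pointwise statement to the assertion that $\Psi\in C^1([0,\infty),\mc H)$ with $\mc H$-derivative $\widetilde{\mb L}_0\Psi(s)$. Here the $\mc H$-norm is very convenient because it involves only integrals over the bounded interval $(-1,1)$, and $u_{f,g}$ together with all its $(s,y)$-derivatives appearing in these integrals is smooth on the compact set $[0,S]\times[-1,1]$ for any $S>0$. The integrands in the relevant difference quotients are therefore uniformly bounded, and dominated convergence together with the fundamental theorem of calculus delivers both continuity and $\mc H$-differentiability. This is the step that requires some care, but no genuine obstacle is hidden since compactness of $[-1,1]$ and smoothness up to the boundary do all the work.

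Having established that $\Psi$ is a classical solution of the abstract Cauchy problem with initial datum $\mb f$, uniqueness forces $\Psi(s)=\mb S_0(s)\mb f$ for all $s\geq 0$, and reading off the first component completes the argument.
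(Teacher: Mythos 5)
Your argument is correct, but it runs in the \emph{opposite direction} from the paper's. The paper starts on the semigroup side: it notes that $\mb f\in\mc D(\mb L_0^n)$ for all $n$, invokes the invariance result from \cite{EngNag00} to conclude $\mb S_0(s)\mb f\in\mc D(\mb L_0^n)$ for all $s$, then uses the regularity theory it has already built (Lemma \ref{lem:Winf}, Corollaries \ref{cor:regL0} and \ref{cor:actL0}) to identify $[\mb S_0(s)\mb f]_1$ as a \emph{smooth classical} finite-energy solution of \eqref{eq:HIV}, and finally appeals to the uniqueness of smooth classical solutions established in Lemma \ref{lem:dAlembert}. You instead start from the explicit classical solution $u_{f,g}$, promote its trajectory $\Psi(s)=(u_{f,g}(s,\cdot),\partial_s u_{f,g}(s,\cdot))$ to a $\mc D(\mb L_0)$-valued $C^1$ solution of the abstract Cauchy problem $\Phi'=\mb L_0\Phi$, $\Phi(0)=\mb f$, and then invoke uniqueness of classical solutions to the abstract problem. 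The uniqueness input is thus on the PDE side in the paper and on the semigroup side in your version. Your route has the advantage of not needing the $\mb L_0$-regularity results at all -- it is self-contained modulo the standard abstract-Cauchy-problem uniqueness theorem -- whereas the paper's route leans on machinery it has already set up for other purposes, so reuses it at no cost. Both are equally rigorous once the $C^1$-in-$\mc H$ verification for $\Psi$ is carried out, which, as you correctly observe, is routine thanks to the boundedness of the weight $(1-y^2)^{1/2}$ and the smoothness of $u_{f,g}$ on compact sets $[0,S]\times[-1,1]$. One small inaccuracy: the oddness of $u_{f,g}(s,\cdot)$ does not involve any parity of the integrand $(1+x)f'(x)+g(x)$; it comes solely from the fact that $y\mapsto -y$ swaps the two integration endpoints $1-e^{-s}(1\pm y)$, which reverses the orientation of the integral. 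Your alternative uniqueness-based argument for oddness is cleaner and entirely correct.
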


\begin{proof}
  We have $\mb f\in \mc D(\mb L_0^n)$ for any $n\in \N$. 
Thus, by \cite{EngNag00}, p.~124, Proposition 5.2, we obtain $\mb
S_0(s)\mb f\in \mc D(\mb L_0^n)$ for all $s\geq 0$ and any $n\in \N$.
Furthermore, since $\partial_s^n \mb S_0(s)\mb f=\mb S_0(s)\mb
L_0^n\mb f$, it follows that $u\in C^\infty([0,\infty)\times
(-1,1))$. Thus, by Corollary \ref{cor:actL0}, $u$ is a smooth
finite-energy solution of Eq.~\eqref{eq:HIV} and by Lemma
\ref{lem:dAlembert}, we must have $u=u_{f,g}$.
\end{proof}

\subsection{Spectral properties}
The special structure of the operator $\mb L_V'$
allows us to obtain important spectral information, even at this level
of generality.
First, we need a simple compactness result.

\begin{lemma}
  \label{lem:compact}
  Let $(f_n)_{n\in\N}\subset C^1(-1,1)$ be a sequence of odd functions
  that satisfy
  \[ \left \|(1-|\cdot|^2)^\frac12 f_n' \right \|_{L^2(-1,1)}\lesssim 1 \]
  for all $n\in \N$. Then there exists a subsequence of $(f_{n})_{n\in
    \N}$ that is Cauchy in $L^2(-1,1)$.
\end{lemma}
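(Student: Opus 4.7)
The plan is to adapt the standard Rellich--Kondrachov argument to the weighted setting. By Lemma \ref{lem:LpH1} with $q=2$, the sequence $(f_n)$ is bounded in $L^2(-1,1)$, so it suffices to extract a subsequence that is Cauchy in $L^2(-1,1)$.

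First I would work on compact subintervals. Fix $\delta\in(0,1)$ and set $I_\delta=(-1+\delta,1-\delta)$. On $I_\delta$ the weight $(1-y^2)$ is bounded below by $\delta(2-\delta)$, so the bound on $\|(1-|\cdot|^2)^{1/2}f_n'\|_{L^2(-1,1)}$ gives a uniform $H^1(I_\delta)$ bound on $(f_n)$ (the $L^2(I_\delta)$ part comes from step one above). By the classical one-dimensional Rellich--Kondrachov theorem, $(f_n)$ has a subsequence that converges in $L^2(I_\delta)$. Taking $\delta_k=1/k$ and extracting iteratively, a standard diagonal argument produces a single subsequence, which I still denote by $(f_n)$, that is Cauchy in $L^2(I_\delta)$ for every $\delta\in (0,1)$.

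The main difficulty is to upgrade Cauchy-ness on each $I_\delta$ to Cauchy-ness on the full interval $(-1,1)$, i.e.\ to control the tails near $y=\pm 1$ uniformly in $n$. For this I would reuse the pointwise estimate established inside the proof of Lemma \ref{lem:LpH1}: since each $f_n$ is odd,
\[
|f_n(y)|^2 \leq \left(\int_0^{|y|}(1-x^2)^{-1}\,dx\right)\left\|(1-|\cdot|^2)^{1/2} f_n'\right\|_{L^2(-1,1)}^2 \lesssim |\log(1-y^2)|+1,
\]
uniformly in $n$. The function $y\mapsto |\log(1-y^2)|+1$ is integrable on $(-1,1)$, hence for any $\varepsilon>0$ one can choose $\delta>0$ small enough so that $\int_{(-1,1)\setminus I_\delta}(|\log(1-y^2)|+1)\,dy<\varepsilon$. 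Applied to $f_n-f_m$ (whose weighted derivative norm is still uniformly bounded), this gives $\|f_n-f_m\|_{L^2((-1,1)\setminus I_\delta)}^2<C\varepsilon$ independently of $n,m$.

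Finally I would combine the two pieces: given $\varepsilon>0$, pick $\delta$ so the tail contribution is below $\varepsilon$, then use Cauchy-ness on $I_\delta$ to make $\|f_n-f_m\|_{L^2(I_\delta)}<\varepsilon$ for all sufficiently large $n,m$. This shows $(f_n)$ is Cauchy in $L^2(-1,1)$, completing the proof. The only real subtlety is that the weight degenerates at the endpoints, which is precisely what the Hardy-type bound from Lemma \ref{lem:LpH1} is designed to handle.
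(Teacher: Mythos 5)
Your argument is correct and proves the lemma, but it follows a genuinely different route from the paper's. The paper reproves the relevant compactness by hand in the Arzel\`a--Ascoli style: it uses the oddness-plus-Cauchy--Schwarz pointwise bound to get boundedness at a countable dense set of points, a H\"older-type estimate to get equicontinuity on each $I_\delta$, a single diagonalization over the dense points, and then the logarithmic tail estimate to conclude $L^2$-Cauchyness; the intermediate output is $L^\infty(I_\delta)$-Cauchyness for every $\delta$ from one subsequence. You instead invoke Rellich--Kondrachov as a black box on each $I_\delta$ (observing that the weight is bounded below away from $\pm1$, so the weighted bound yields a genuine $H^1(I_\delta)$ bound), diagonalize over the exhaustion $\delta_k=1/k$, and then control the tails by the same logarithmic bound coming from Lemma~\ref{lem:LpH1}. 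Both arguments are sound; yours is shorter and leans on a standard compactness theorem, while the paper's is self-contained and produces the stronger $L^\infty(I_\delta)$ convergence as a byproduct. One small remark: when you apply the pointwise bound to $f_n-f_m$ you could equally just observe $|f_n-f_m|^2\lesssim |f_n|^2+|f_m|^2\lesssim |\log(1-y^2)|+1$ directly, which avoids re-invoking the weighted derivative bound for differences; either phrasing is fine.
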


\begin{proof}
  We mimic the classical proof of the Arzel\`a-Ascoli theorem.
The set $(-1,1)\cap \Q$ is countable and dense in
  $(-1,1)$ and we write $(-1,1)\cap \Q=\{y_j: j\in \N\}$. By the fundamental theorem
  of calculus and the oddness of $f_n$, we have
  \[ f_n(y)=\int_0^y f_n'(x)dx \]
  and thus, by Cauchy-Schwarz,
  \begin{align*}
    |f_n(y)|
    &\leq\int_0^{y}|f_n'(x)|dx=\int_0^{y}(1-x^2)^{-\frac12}(1-x^2)^\frac12
      |f_n'(x)|dx \\
    &\leq \left (
      \int_0^{y}(1-x^2)^{-1}dx\right)^{1/2}
      \left \|(1-|\cdot|^2)^\frac12 f_n'\right\|_{L^2(-1,1)} \\
    &\lesssim \left |\log(1-y^2)\right |^\frac12+1
  \end{align*}
  for all $y\in (-1,1)$ and all $n\in \N$. Since $y_j\in (-1,1)$, this
  estimate shows that for each $j\in \N$, the sequence
  $(f_n(y_j))_{n\in\N}\subset \C$ is bounded. By Cantor's classical
  diagonal argument we extract a subsequence
  $(f_{n_k})_{k\in\N}$ of $(f_n)_{n\in\N}$ such that for each $j\in
  \N$, $(f_{n_k}(y_j))_{k\in\N}$ is Cauchy in $\C$.

Now note that for any $\delta\in (0,1)$, we have the bound
  \[ |f_n(x)-f_n(y)|\lesssim \delta^{-\frac12}|x-y|^\frac12 \]
  for all $x,y\in [-1+\delta,1-\delta]$ and $n\in \N$. Indeed,
  \[ f_n(x)-f_n(y)= \int_y^x f_n'(t)dt \]
  and thus, by Cauchy-Schwarz,
  \begin{align*}
    |f_n(x)-f_n(y)|
    &\lesssim |x-y|^\frac12 \left (\int_{-1+\delta}^{1-\delta}|f'_n(t)|^2
      dt\right )^{1/2}\lesssim \delta^{-\frac12}|x-y|^\frac12 \left
      \|(1-|\cdot|^2)^\frac12 f_n'\right\|_{L^2(-1,1)} \\
    &\lesssim \delta^{-\frac12}|x-y|^\frac12,
  \end{align*}
  as claimed. As a consequence of this estimate, $(f_n)_{n\in\N}$ is
  equicontinuous on $[-1+\delta,1-\delta]$ and the density of $\{y_j:
  j\in \N\}$ implies that $(f_{n_k})_{k\in\N}$ is Cauchy in
  $L^\infty(-1+\delta,1-\delta)$. 

  Now let $\epsilon\in (0,1)$. Then there exists an $N_\epsilon\in \N$ such that
  \[ \|f_{n_k}-f_{n_\ell}\|_{L^\infty(-1+\epsilon,1-\epsilon)}\leq
      \epsilon \]
    for all $k,\ell\geq N_\epsilon$. Consequently,
    \begin{align*}
      \|f_{n_k}-f_{n_\ell}\|_{L^2(-1,1)}^2
      &=\|f_{n_k}-f_{n_\ell}\|_{L^2(-1+\epsilon,1-\epsilon)}^2 \\
      &\quad
        +\|f_{n_k}-f_{n_\ell}\|_{L^2(-1,-1+\epsilon)}^2+\|f_{n_k}-f_{n_\ell}\|_{L^2(1-\epsilon,1)}^2
      \\
      &\lesssim \epsilon^\frac12
    \end{align*}
for all $k,\ell\geq N_\epsilon$ since
   \[ \|f_n\|_{L^2(-1,-1+\epsilon)}^2\lesssim
     \int_{-1}^{-1+\epsilon}|\log(1-y^2)|dy\lesssim
     \epsilon^\frac12 \]
   for all $n\in \N$ and analogously for $\|f_n\|_{L^2(1-\epsilon,1)}^2$.
\end{proof}

We continue with a simple resolvent bound. Note that this bound is just a consequence
of the fact that the operator $\mb L_V'$ maps the first component to
the second component.

\begin{lemma}
  \label{lem:LVRes}
  Let $\epsilon>0$. Then we have the bound
  \[ \|\mb L_V'\mb R_{\mb L_0}(\lambda)\mb f\|_{\mc H}\lesssim
    \frac{1}{|\lambda|}\|\mb f\|_{\mc H} \]
  for all $\lambda\in \C$ with $\Re\lambda\geq \epsilon$ and all $\mb
  f\in \mc H$.
\end{lemma}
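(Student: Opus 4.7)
The plan is to exploit the very special structure of $\mb L_V'$, which only depends on the first component of its argument: for any $\mb g = (g_1,g_2) \in \mc H$,
\[
\|\mb L_V'\mb g\|_{\mc H}^2 = \int_{-1}^1 |V(y)|^2 |g_1(y)|^2 \, dy \lesssim \|g_1\|_{L^2(-1,1)}^2,
\]
since $V$ is bounded on $[-1,1]$. Thus the entire problem reduces to showing that the first component $u_1$ of $\mb u := \mb R_{\mb L_0}(\lambda)\mb f$ satisfies $\|u_1\|_{L^2(-1,1)} \lesssim |\lambda|^{-1}\|\mb f\|_{\mc H}$.

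To extract this, I would read off the first component of the resolvent equation $(\lambda - \mb L_0)\mb u = \mb f$. For any $\mb v \in \widetilde{\mc H}$ one has $[\widetilde{\mb L}_0 \mb v]_1 = v_2$ by definition, and since $\mb L_0$ is the closure of $\widetilde{\mb L}_0$, a simple approximation argument (using a defining sequence $\mb u_k \in \widetilde{\mc H}$ with $\mb u_k \to \mb u$ and $\widetilde{\mb L}_0 \mb u_k \to \mb L_0 \mb u$ in $\mc H$) yields $[\mb L_0 \mb u]_1 = u_2$. Hence the first component of the resolvent equation reads
\[
\lambda u_1 - u_2 = f_1, \qquad\text{i.e.,}\qquad u_1 = \lambda^{-1}(u_2 + f_1).
\]

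From here the estimate is assembled from three elementary ingredients. First, the contraction estimate $\|\mb S_0(s)\|\leq 1$ from Lemma \ref{lem:S0} combined with the Hille--Yosida resolvent bound for contraction semigroups gives $\|\mb R_{\mb L_0}(\lambda)\mb f\|_{\mc H} \leq (\Re\lambda)^{-1}\|\mb f\|_{\mc H} \leq \epsilon^{-1}\|\mb f\|_{\mc H}$, and in particular $\|u_2\|_{L^2(-1,1)} \leq \|\mb u\|_{\mc H} \leq \epsilon^{-1}\|\mb f\|_{\mc H}$. Second, since the first component $f_1$ is odd and satisfies $\|(1-|\cdot|^2)^{1/2}f_1'\|_{L^2} \leq \|\mb f\|_{\mc H}$, Lemma \ref{lem:LpH1} with $q=2$ gives $\|f_1\|_{L^2(-1,1)} \lesssim \|\mb f\|_{\mc H}$. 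Combining these in the identity $u_1 = \lambda^{-1}(u_2 + f_1)$ yields $\|u_1\|_{L^2(-1,1)} \lesssim |\lambda|^{-1}\|\mb f\|_{\mc H}$, with implicit constant depending on $\epsilon$.

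Plugging this back into the opening reduction gives the claim. The only point requiring any care is the identification $[\mb L_0 \mb u]_1 = u_2$ on the abstract domain $\mc D(\mb L_0)$, but this is essentially immediate from the definition of the closure, since the first-component projection is continuous on $\mc H$. No detailed analysis of the generator beyond Lemma \ref{lem:S0} and Lemma \ref{lem:LpH1} is needed.
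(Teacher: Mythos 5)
Your proof is correct, and it takes a mildly but genuinely different route from the paper's. Both arguments hinge on the same identity $u_1=\lambda^{-1}(u_2+f_1)$ for $\mb u=\mb R_{\mb L_0}(\lambda)\mb f$, and both finish by combining the Hille--Yosida resolvent bound $\|\mb R_{\mb L_0}(\lambda)\|\leq (\Re\lambda)^{-1}$ with Lemma \ref{lem:LpH1} applied to $f_1$. The difference lies in how the first-component identity is justified. The paper restricts first to $\mb f\in\mc D(\mb L_0)$, so that $\mb u\in\mc D(\mb L_0^2)$, and then invokes the regularity theory (Corollaries \ref{cor:regL0} and \ref{cor:actL0}) to identify $\mb L_0$ with the classical differential expression $\mathfrak L_0$ and read off $\lambda u_1-u_2=f_1$ pointwise, before extending the estimate by density. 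You instead argue abstractly: since $\mb L_0$ is by definition the closure of $\widetilde{\mb L}_0$, any $\mb u\in\mc D(\mb L_0)$ has an approximating sequence $\mb u_k\in\widetilde{\mc H}$ with $\mb u_k\to\mb u$ and $\widetilde{\mb L}_0\mb u_k\to\mb L_0\mb u$ in $\mc H$; because the first-component map into $L^2$ is continuous (Lemma \ref{lem:LpH1}) and the second-component map is trivially continuous, one gets $[\mb L_0\mb u]_1=u_2$ in $L^2$ by passing to the limit in $[\widetilde{\mb L}_0\mb u_k]_1=u_{k,2}$. This bypasses the regularity corollaries and the final density step, working directly with arbitrary $\mb f\in\mc H$; the paper's route is a bit heavier but sits naturally alongside the pointwise identifications it needs elsewhere. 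Either way the estimate and its constants are identical.
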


\begin{proof}
  To begin with, let $\mb f=(f_1,f_2)\in \mc D(\mb L_0)$ and set $\mb u:=\mb
  R_{\mb L_0}(\lambda)\mb f$. Then $\mb u=(u_1,u_2)\in \mc D(\mb L_0^2)$ and
  $(\lambda-\mb L_0)\mb u=\mb f$. By Corollary \ref{cor:regL0}, $\mb u\in
  C^2(-1,1)\times C^1(-1,1)$ and Corollary \ref{cor:actL0} yields
  $(\lambda-\mathfrak L_0) \mb u=\mb f$.
  The first component of this equation reads
  $\lambda u_1-u_2=f_1$ or, equivalently,
  \[ [\mb R_{\mb L_0}(\lambda)\mb f]_1=\frac{1}{\lambda}\left (\mb
      [\mb R_{\mb L_0}(\lambda)\mb f]_2+f_1\right ). \]
  Consequently,
  \begin{align*}
    \|\mb L_V'\mb R_{\mb L_0}(\lambda)\mb f\|_{\mc H}
    &=\|V[\mb R_{\mb L_0}(\lambda)\mb f]_1\|_{L^2(-1,1)}
    \lesssim \frac{1}{|\lambda|}\left (\|[\mb R_{\mb L_0}(\lambda)\mb
      f]_2\|_{L^2(-1,1)}+\|f_1\|_{L^2(-1,1)}\right ) \\
    &\lesssim \frac{1}{|\lambda|}\left (\|\mb R_{\mb L_0}(\lambda)\mb
      f\|_{\mc H}+\|\mb f\|_{\mc H}\right )
      \lesssim \frac{1}{|\lambda|}\left (\frac{1}{\Re\lambda}\|\mb
      f\|_{\mc H}+\|\mb f\|_{\mc H}\right ) \\
    &\lesssim \frac{1}{|\lambda|}\|\mb f\|_{\mc H}
  \end{align*}
  by Lemma \ref{lem:LpH1} and \cite{EngNag00}, p.~55, Theorem
  1.10. Thus, the claim follows by density.
\end{proof}

\begin{lemma}
  \label{lem:LVcompact}
  The operator $\mb L_V': \mc H\to\mc H$ is compact. As a consequence,
  the set
  \[ \sigma(\mb L_V)\cap \{z\in\C: \Re z> 0\} \] consists of
  finitely many eigenvalues of finite algebraic multiplicity.
\end{lemma}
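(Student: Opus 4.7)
The plan is to prove the two assertions in sequence, with the compactness claim being the actual content and the spectral claim following from standard Fredholm machinery.

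For the compactness of $\mb L_V'$, I would reduce to Lemma \ref{lem:compact}. Let $(\mb f_n)$ be a bounded sequence in $\mc H$. Since $\widetilde{\mc H}$ is dense in $\mc H$ and $\mb L_V'$ is bounded on $\mc H$, a standard approximation and diagonal argument reduces the task to the case $\mb f_n = (f_{n,1}, f_{n,2}) \in \widetilde{\mc H}$. Then the first components $f_{n,1}$ are smooth and odd with
\[
\bigl\|(1-|\cdot|^2)^{1/2} f_{n,1}'\bigr\|_{L^2(-1,1)} \leq \|\mb f_n\|_{\mc H}
\]
uniformly bounded, so Lemma \ref{lem:compact} extracts a subsequence $(f_{n_k,1})$ Cauchy in $L^2(-1,1)$. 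Since $V \in C^\infty([-1,1])$ is in particular bounded, $(V f_{n_k,1})$ is Cauchy in $L^2$, whence $\mb L_V' \mb f_{n_k} = (0,-V f_{n_k,1})$ is Cauchy in $\mc H$.

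For the spectral consequence, I would exploit the factorization
\[
\lambda - \mb L_V = (\lambda - \mb L_0)\bigl(I - \mb R_{\mb L_0}(\lambda)\mb L_V'\bigr),
\]
valid for $\lambda \in \rho(\mb L_0)$, which by Corollary \ref{cor:specL0} contains $\{\Re z > 0\}$. Thus $\lambda \in \sigma(\mb L_V)$ precisely when $1$ is an eigenvalue of $K(\lambda) := \mb R_{\mb L_0}(\lambda)\mb L_V'$. Because $\mb L_V'$ is compact (just established) and $\mb R_{\mb L_0}(\lambda)$ is bounded, $K(\lambda)$ is compact, and $\lambda \mapsto K(\lambda)$ is an analytic operator-valued family on the right half-plane. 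By Lemma \ref{lem:LVRes}, for any $\epsilon > 0$ there is $R(\epsilon) > 0$ with $\|K(\lambda)\| < 1$ whenever $\Re \lambda \geq \epsilon$ and $|\lambda| \geq R(\epsilon)$, so the trivial alternative of the analytic Fredholm theorem is ruled out. The theorem then yields a discrete subset $S \subset \{\Re z > 0\}$ where $I - K(\lambda)$ fails to be invertible; at each such $\lambda$, Riesz--Schauder theory applied to the compact operator $K(\lambda)$ provides a finite-dimensional generalized eigenspace which, under the factorization, corresponds to an eigenvalue of $\mb L_V$ of finite algebraic multiplicity.

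Finiteness in the open half-plane follows by combining the $|\lambda|^{-1}$-decay of $\|K(\lambda)\|$ from Lemma \ref{lem:LVRes} with the discreteness just obtained: in every strip $\{\Re z \geq \epsilon\}$, eigenvalues are confined to a compact subset of the open half-plane, hence finite in number. The main technical obstacle, and the part I expect to require the most care, is ruling out an accumulation of $S$ at the imaginary axis itself. I anticipate this will be handled either by a quantitative refinement of Lemma \ref{lem:LVRes} that remains useful uniformly as $\Re \lambda \downarrow 0$, or by a contradiction argument showing that a boundary accumulation point would yield, in the limit, a nontrivial generalized eigenfunction of $\mb L_V$ inconsistent with Corollary \ref{cor:specL0}.
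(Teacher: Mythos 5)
Your argument follows the paper's route essentially step by step. The compactness part is the same: reduce to Lemma~\ref{lem:compact} via the uniform bound on $(1-|\cdot|^2)^{1/2}f_{n,1}'$, extract an $L^2$-Cauchy subsequence of first components, and use boundedness of $V$ to conclude that $\mb L_V'\mb f_{n_k}$ is Cauchy in $\mc H$; your detour through $\widetilde{\mc H}$ by density is harmless extra care (the paper works directly with $(f_{n,1})$, implicitly using the identification of $\mc H$ with a function space). For the spectral consequence you use the mirror factorization $\lambda-\mb L_V=(\lambda-\mb L_0)(I-\mb R_{\mb L_0}(\lambda)\mb L_V')$ whereas the paper uses $\lambda-\mb L_V=[I-\mb L_V'\mb R_{\mb L_0}(\lambda)](\lambda-\mb L_0)$; these are equivalent, and both lead to the same application of the analytic Fredholm theorem on $H^+$ combined with the decay from Lemma~\ref{lem:LVRes} and the growth bound from Lemma~\ref{lem:LV}.

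The one place where you deviate is that you explicitly flag the possibility of accumulation of the exceptional set at $i\R$ and leave it open, offering two possible fixes. This caution is actually warranted: the paper asserts ``By Lemmas~\ref{lem:LV} and~\ref{lem:LVRes} we immediately see that $\sigma(\mb L_V)\cap H^+$ is bounded'' and then passes to ``finitely many poles'' via analytic Fredholm, but the constant in Lemma~\ref{lem:LVRes} blows up like $1/\Re\lambda$ as $\Re\lambda\downarrow 0$, so that estimate alone only excludes eigenvalues with $|\Im\lambda|\gtrsim 1/\Re\lambda$, and boundedness plus discreteness in the \emph{open} half-plane does not by itself preclude accumulation on the imaginary axis. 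In other words, the paper's proof is terse at exactly the point you identify as delicate. Your anticipation that a quantitative refinement near $\Re\lambda=0$ is needed points in the right direction; in the paper's own logic this is ultimately supplied by the ODE analysis in the following section (Proposition~\ref{prop:v1} and Lemma~\ref{lem:W} give $u_1(0,\lambda)=1+O(\langle\lambda\rangle^{-1})$, hence no eigenvalues with $\Re\lambda\geq 0$ and $|\Im\lambda|$ large), but that machinery is not invoked in the proof of this lemma. So: same approach, with your hedge being an honest acknowledgement of a step the paper compresses.
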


\begin{proof}
  Let $(\mb f_n)_{n\in \N}\subset \mc H$ be a bounded sequence and
  write $\mb f_n=(f_{n,1},f_{n,2})$. Then we have
  \[ \left \|(1-|\cdot|^2)^\frac12 f_{n,1}'\right \|_{L^2(-1,1)}\lesssim
  1 \]
for all $n\in \N$ and Lemma \ref{lem:compact} implies that
$(f_{n,1})_{n\in\N}$ has a subsequence, again denoted by $(
f_{n,1})_{n\in \N}$, that is Cauchy in $L^2(-1,1)$.
We have
\[ \|\mb L_V'\mb f_m-\mb L_V'\mb f_n\|_{\mc
    H}=\|V(f_{m,1}-f_{n,1})\|_{L^2(-1,1)}\lesssim
  \|f_{m,1}-f_{n,1}\|_{L^2(-1,1)} \]
and thus, $(\mb L_V'\mb f_n)_{n\in\N}$ has a convergent
subsequence. This shows that $\mb L_V'$ is compact.

  By Corollary \ref{cor:specL0}, $\mb R_{\mb L_0}$ is holomorphic on
  the open right half-plane $H^+:=\{z\in \C: \Re z>0\}$ and 
  the obvious identity $\lambda-\mb L_V=[\mb I-\mb L_V'\mb R_{\mb
    L_0}(\lambda)](\lambda-\mb L_0)$ shows that $\lambda\in H^+$
  belongs to $\rho(\mb L_V)$ if and only if $\mb I-\mb
  L_V'\mb R_{\mb L_0}(\lambda)$ is bounded invertible.
  By Lemmas \ref{lem:LV} and \ref{lem:LVRes} we immediately see that $\sigma(\mb
  L_V)\cap H^+$ is bounded.
  Furthermore, the map
  $\lambda\mapsto \mb L_V'\mb R_{\mb L_0}(\lambda)$ is holomorphic on
  $H^+$ and has values in the set of compact operators on the Hilbert space
  $\mc H$. Consequently, the
  analytic Fredholm theorem (see e.g.~\cite{Sim15-4}, p.~194, Theorem
  3.14.3) implies that the inverse $\lambda\mapsto [\mb I-\mb L_V'\mb
  R_{\mb L_0}(\lambda)]^{-1}$ has finitely many poles of finite order with
  finite rank residues. For every $\lambda\in \sigma(\mb L_V)\cap H^+$
  we therefore have $1\in \sigma(\mb L_V'\mb R_{\mb L_0}(\lambda))$
  and thus, $1\in \sigma_p(\mb L_V'\mb R_{\mb L_0}(\lambda))$.
  Let $\mb f_\lambda\in \mc H$ be an associated eigenfunction. Then
  $\mb R_{\mb L_0}(\lambda)\mb f_\lambda\in \mc D(\mb L_V)$ is an eigenfunction of
  $\mb L_V$ and we see that every $\lambda\in \sigma(\mb
  L_V)\cap H^+$ is an eigenvalue of $\mb L_V$ and the corresponding
  spectral projection has finite rank.
\end{proof}

Lemma \ref{lem:LVcompact} allows us to remove the unstable part of the
spectrum by a finite-rank projection.

\begin{definition}
Let $\gamma: [0,2\pi]\to\rho(\mb L_V)$ be a positively oriented,
regular, smooth,
simple closed curve that
encircles the set
$\sigma(\mb L_V)\cap \{z\in \C: \Re z> 0\}$ (the existence of such a
curve is guaranteed by Lemma \ref{lem:LVcompact}). Then we define
  \[ \mb P_V:=\frac{1}{2\pi i}\int_{\gamma} \mb R_{\mb
    L_V}(\lambda)d\lambda. \]
\end{definition}

Our goal now is to prove a set of Strichartz estimates for the
\emph{reduced semigroup} $\mb S_V(s)(\mb I-\mb P_V)$ under a suitable \emph{spectral
assumption} on $\mb L_V$. To this end, we first need to clarify the
relation between the abstract Hilbert space $\mc H$ and the standard
Lebesgue spaces.

\begin{definition}
  For $q\in [1,\infty)$, we define the Banach space $L^q_\mathrm{odd}(-1,1)$ as the
  completion of $\{f\in C^\infty([-1,1]): f \mbox{ odd}\}$ with
  respect
  to $\|\cdot\|_{L^q(-1,1)}$.
\end{definition}

\begin{lemma}
  \label{lem:HinLq}
  Let $q\in [1,\infty)$.
  Then there exists a linear, bounded, and injective map $i: \mc H\to
  L_\mathrm{odd}^q(-1,1)\times L_\mathrm{odd}^2(-1,1)$. 
\end{lemma}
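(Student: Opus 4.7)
The plan is to define $i$ on the dense subspace $\widetilde{\mathcal H}\subset\mc H$ as the tautological inclusion $(f_1,f_2)\mapsto (f_1,f_2)$, observe that it is bounded into $L^q_\mathrm{odd}(-1,1)\times L^2_\mathrm{odd}(-1,1)$, extend it by continuity, and then verify injectivity by a distributional argument.

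First, for $(f_1,f_2)\in\widetilde{\mc H}$, oddness of $f_1$ together with Lemma \ref{lem:LpH1} yields
\[ \|f_1\|_{L^q(-1,1)}\lesssim \left\|(1-|\cdot|^2)^\frac12 f_1'\right\|_{L^2(-1,1)}\leq \|(f_1,f_2)\|_{\mc H}, \]
while trivially $\|f_2\|_{L^2(-1,1)}\leq \|(f_1,f_2)\|_{\mc H}$. Hence the inclusion map from $\widetilde{\mc H}$, viewed as a subspace of $L^q_\mathrm{odd}(-1,1)\times L^2_\mathrm{odd}(-1,1)$, into that product space is bounded with respect to $\|\cdot\|_{\mc H}$. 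Since $\widetilde{\mc H}$ is dense in $\mc H$ by construction, the bounded linear transformation theorem provides a unique bounded linear extension $i:\mc H\to L^q_\mathrm{odd}(-1,1)\times L^2_\mathrm{odd}(-1,1)$.

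For injectivity, suppose $\mb f\in\mc H$ with $i(\mb f)=0$ and pick a representing Cauchy sequence $(\mb f_n)_{n\in\N}\subset\widetilde{\mc H}$, $\mb f_n=(f_{n,1},f_{n,2})$, with $\mb f_n\to\mb f$ in $\mc H$. By definition of $i$, $f_{n,1}\to 0$ in $L^q(-1,1)$ and $f_{n,2}\to 0$ in $L^2(-1,1)$. The second component already gives $\|f_{n,2}\|_{L^2(-1,1)}\to 0$, which is precisely the second component of $\|\mb f_n\|_{\mc H}$. For the first component, the Cauchy property in $\mc H$ implies that $h_n:=(1-|\cdot|^2)^{1/2}f_{n,1}'$ is Cauchy in $L^2(-1,1)$ and hence converges to some $h\in L^2(-1,1)$. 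On the other hand, $f_{n,1}\to 0$ in $L^q(-1,1)\subset L^1_\mathrm{loc}(-1,1)$, hence in the sense of distributions on $(-1,1)$, so $f_{n,1}'\to 0$ as distributions and consequently $h_n\to 0$ as distributions (multiplication by the smooth function $(1-y^2)^{1/2}$ being continuous on $\mc D'(-1,1)$). Uniqueness of distributional limits forces $h=0$ in $L^2(-1,1)$, so $\|\mb f_n\|_{\mc H}\to 0$ and $\mb f=0$.

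The one step that requires a bit of care is the last one: reconciling the $L^2$-limit of the weighted derivatives with the vanishing of $f_{n,1}$ in $L^q$. The key point is that both convergences descend to $\mc D'(-1,1)$, where limits are unique; nothing more delicate than this is needed, and once this observation is made the entire argument is routine.
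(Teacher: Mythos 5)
Your proof is correct. The construction of $i$ on $\widetilde{\mc H}$ via Lemma~\ref{lem:LpH1} and the extension by density are exactly what the paper does. The injectivity step, however, takes a genuinely different route. The paper first notes that $\mb f_n\to \mb f$ strongly (hence weakly) in $\mc H$, and then uses the integration-by-parts identity
\[ \int_{-1}^1 (1-y^2)f'g'\,dy=-\int_{-1}^1 (1-y^2)fg''\,dy+2\int_{-1}^1 y fg'\,dy \]
together with density to transfer the $L^q\times L^2$ convergence $\mb f_n\to 0$ to weak convergence $\mb f_n\rightharpoonup 0$ in $\mc H$; uniqueness of \emph{weak limits in $\mc H$} then forces $\mb f=0$. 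You instead work directly with the Cauchy property: $(1-|\cdot|^2)^{1/2}f_{n,1}'$ converges in $L^2$ to some $h$, while its distributional limit is $0$ (via $f_{n,1}\to 0$ in $L^1_{\mathrm{loc}}$, continuity of $\partial_y$ and of multiplication by the smooth weight on $\mc D'(-1,1)$), so $h=0$ by uniqueness of \emph{distributional limits}, and hence $\|\mb f_n\|_{\mc H}\to 0$. The two approaches are closely related but use different uniqueness principles; yours avoids the weak-topology machinery in $\mc H$ and is marginally more elementary and more explicit in showing that the representing sequence is actually null, whereas the paper's integration-by-parts device is a compact way to dualize the $\mc H$ inner product against smooth test pairs. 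Either is fine.
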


\begin{proof}
  For $\mb f\in \widetilde{\mc H}$ we set $i(\mb f):=\mb f$. By Lemma
  \ref{lem:LpH1} we obtain the bound
  \[ \|i(\mb f)\|_{L^q(-1,1)\times
    L^2(-1,1)}\lesssim \|\mb f\|_{\mc H} \] for all $\mb f\in
  \widetilde{\mc H}$ and by density, $i$ extends to a linear and
  bounded map $i: \mc H\to L^q_\mathrm{odd}(-1,1)\times L_\mathrm{odd}^2(-1,1)$. To show the
  injectivity, suppose that $i(\mb f)=0$ for $\mb f\in \mc H$. Then
  there exists a sequence $(\mb f_n)_{n\in \N}\in \widetilde{\mc H}$
  such that $\mb f_n\to \mb f$ in $\mc H$ and $i(\mb f_n)\to i(\mb
  f)=0$ in $L_\mathrm{odd}^q(-1,1)\times L_\mathrm{odd}^2(-1,1)$, as
  $n\to\infty$. In particular,
  $\mb f_n \rightharpoonup \mb f$ in $\mc H$. Furthermore, since
  \[ \int_{-1}^1 (1-y^2)f'(y)g'(y)dy=-\int_{-1}^1
    (1-y^2)f(y)g''(y)dy+2\int_{-1}^1 y f(y)g'(y)dy \]
for all $f,g\in C^\infty([-1,1])$,
  we see that $\mb f_n=i(\mb f_n)\to 0$ in $L_\mathrm{odd}^q(-1,1)\times L_\mathrm{odd}^2(-1,1)$
  implies $\mb f_n\rightharpoonup 0$ in $\mc H$ and the uniqueness of
  weak limits yields $\mb f=0$.
\end{proof}

\begin{remark}
  \label{rem:HinLq}
  By Lemma \ref{lem:HinLq}, we may identify $\mb f\in \mc H$ with
  $i(\mb f)\in L_\mathrm{odd}^q(-1,1)\times L_\mathrm{odd}^2(-1,1)$ and this
  yields the continuous embedding $\mc H\hookrightarrow
  L_\mathrm{odd}^q(-1,1)\times L_\mathrm{odd}^2(-1,1)$.
\end{remark}

Now we aim for proving the following Strichartz estimates for the
reduced semigroup.

\begin{theorem}
  \label{thm:StrichS}
  Let $V\in C^\infty([-1,1])$ be even, $p\in [2,\infty]$, and $q\in
  [1,\infty)$. Furthermore,
suppose that the operator $\mb L_V$ has no eigenvalues on the imaginary
axis. Then we have the Strichartz estimates
\[ \|[\mb S_V(s)(\mb I-\mb P_V)\mb
  f]_1\|_{L^p_s(0,\infty)L^q(-1,1)}\lesssim \|(\mb I-\mb P_V)\mb f\|_{\mc H} \]
for all $\mb f\in \mc H$.
\end{theorem}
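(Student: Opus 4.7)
The plan is to combine the free Strichartz estimate of Proposition \ref{prop:Strich0} with a Duhamel formula and a resolvent-based Kato-type smoothing estimate for the perturbative correction.

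\textbf{Step 1: Duhamel reduction.} Setting $u(s,\cdot) := [\mb S_V(s)(\mb I - \mb P_V)\mb f]_1$ and using that $\mb L_V'\mb g = (0, -Vg_1)$ together with the commutation $\mb P_V\mb S_V(s) = \mb S_V(s)\mb P_V$ (which follows from the definition of $\mb P_V$ as a Dunford integral), Duhamel yields
\[ u(s,\cdot) = [\mb S_0(s)(\mb I - \mb P_V)\mb f]_1 - \int_0^s [\mb S_0(s-s')(0, Vu(s',\cdot))]_1\,ds'. \]
The first term is controlled by Proposition \ref{prop:Strich0} applied to $(\mb I - \mb P_V)\mb f \in \mc H$. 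For the second, a $TT^*$-dualization of Proposition \ref{prop:Strich0} applied to the sine propagator $g \mapsto [\mb S_0(\cdot)(0,g)]_1$, combined with the Christ-Kiselev lemma (the endpoint $p=2$ handled via Minkowski), gives the inhomogeneous bound
\[ \left\|\int_0^s [\mb S_0(s-s')(0,h(s',\cdot))]_1\,ds'\right\|_{L^p_s L^q_y} \lesssim \|h\|_{L^2_{s'} L^2_y}. \]
Consequently, the whole estimate reduces to the Kato-type smoothing bound $\|Vu\|_{L^2_s L^2_y} \lesssim \|\mb f\|_{\mc H}$.

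\textbf{Step 2: Smoothing via resolvent estimate.} Extending $u$ by zero for $s<0$ and writing the Laplace transform on the line $\Re\lambda = \epsilon > 0$, Parseval identifies
\[ \int_0^\infty e^{-2\epsilon s}\|Vu(s,\cdot)\|_{L^2}^2\,ds = \frac{1}{2\pi}\int_\R \left\|V[\mb R_{\mb L_V}(\epsilon + i\tau)(\mb I - \mb P_V)\mb f]_1\right\|_{L^2}^2\,d\tau. \]
Applying the resolvent identity $\mb R_{\mb L_V}(\lambda) = \mb R_{\mb L_0}(\lambda) + \mb R_{\mb L_V}(\lambda)\mb L_V'\mb R_{\mb L_0}(\lambda)$ splits the right-hand side into a free contribution --- which via Parseval equals $\|V[\mb S_0(\cdot)(\mb I - \mb P_V)\mb f]_1\|_{L^2_s L^2_y}^2$ and is bounded by Proposition \ref{prop:Strich0} with $q=2$ together with $V \in L^\infty$ --- and a perturbative contribution. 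For the latter, factor $\mb R_{\mb L_V}(\lambda) = [\mb I - \mb R_{\mb L_0}(\lambda)\mb L_V']^{-1}\mb R_{\mb L_0}(\lambda)$ and use Lemma \ref{lem:LVRes} to handle $|\tau|\to\infty$. Sending $\epsilon\to 0^+$ by monotone convergence yields the smoothing estimate.

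\textbf{Main obstacle.} The technical crux is a uniform-in-$\epsilon$ bound on $[\mb I - \mb R_{\mb L_0}(\lambda)\mb L_V']^{-1}(\mb I - \mb P_V)$ along $\Re\lambda = \epsilon$. By Lemma \ref{lem:LVcompact}, $\mb L_V'$ is compact, so the analytic Fredholm theorem gives a meromorphic inverse on a neighborhood of $\{\Re\lambda \geq 0\}$ whose only poles in $\{\Re\lambda > 0\}$ are removed by $(\mb I - \mb P_V)$; the hypothesis that $\mb L_V$ has no imaginary eigenvalues rules out any new pole accumulating on $i\R$. Upgrading this pointwise invertibility to a uniform operator-norm bound as $\epsilon\to 0^+$ is a limiting-absorption-principle statement --- the delicate point is continuity of the Fredholm inverse as the contour slides onto the boundary of $\rho(\mb L_0)$. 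Once this is in place, all remaining ingredients (Parseval, Christ-Kiselev, and Minkowski) are routine.
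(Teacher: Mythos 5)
Your high-level plan --- Duhamel plus a Kato-type smoothing estimate, obtained by Parseval from resolvent bounds --- is a legitimate template, and Step~1 is sound (though simpler than you make it: the sine propagator satisfies $\lvert u_{0,g}(s,y)\rvert\lesssim e^{-s/2}\lVert g\rVert_{L^2}$ pointwise by the last display in the proof of Proposition~\ref{prop:Strich0}, so the inhomogeneous bound follows from Young's inequality without any appeal to $TT^*$ or Christ--Kiselev). The decisive gap is in Step~2, in precisely the place you flag as the ``main obstacle'' but then dismiss as a routine continuity question.

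The difficulty is not that the Fredholm inverse might fail to be continuous up to the boundary; it is that there is no boundary to approach. By the remark after Corollary~\ref{cor:specL0}, $\sigma(\mb L_0)=\{z\in\C:\Re z\le 0\}$, so the \emph{entire imaginary axis} lies in the spectrum of the free generator. Consequently $\mb R_{\mb L_0}(\lambda)$ has no boundary values on $i\R$ and $\lVert\mb R_{\mb L_0}(\epsilon+i\tau)\rVert$ is not uniformly bounded as $\epsilon\to 0^+$. Lemma~\ref{lem:LVRes}, which you invoke, inherits this: its proof uses $\lVert\mb R_{\mb L_0}(\lambda)\mb f\rVert_{\mc H}\lesssim(\Re\lambda)^{-1}\lVert\mb f\rVert_{\mc H}$, so the constant in that lemma blows up like $1/\epsilon$. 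Any bound you try to extract for the perturbative piece $\mb R_{\mb L_V}(\lambda)\mb L_V'\mb R_{\mb L_0}(\lambda)$ by chaining Lemma~\ref{lem:LVRes} with an abstract bound on $[\mb I-\mb L_V'\mb R_{\mb L_0}(\lambda)]^{-1}$ will therefore degenerate as you push the contour onto $i\R$, and the limiting-absorption step you need --- a bound uniform in $\epsilon\in(0,\epsilon_0]$ --- cannot be read off from the soft spectral information available at this level of generality. The analytic Fredholm theorem and the assumption $\sigma_p(\mb L_V)\cap i\R=\emptyset$ give pointwise invertibility for each fixed $\epsilon>0$, but not the uniform operator-norm bound, and ``sliding onto the boundary of $\rho(\mb L_0)$'' is exactly the wrong picture here since $i\R$ is interior to $\sigma(\mb L_0)$, not on its boundary.

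The paper resolves this by avoiding abstract resolvent bounds entirely: it works with the \emph{difference} of Green functions $G_V-G_0$, constructs $G_V$ from an explicitly controlled fundamental system (Proposition~\ref{prop:v1}, Lemma~\ref{lem:W}, Lemma~\ref{lem:Green}), and proves in Theorem~\ref{thm:T} a uniform-in-$\epsilon$ Strichartz bound for the kernel operators $T_\epsilon$, $\dot T_\epsilon$ by pointwise oscillatory-integral estimates (Proposition~\ref{prop:K} and its analogues). The needed ``limiting absorption'' information is thus encoded in the decay of the explicit kernel $K_\epsilon(s,y,x)$ rather than in an operator-norm bound for $[\mb I-\mb L_V'\mb R_{\mb L_0}(\lambda)]^{-1}$. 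In short: your framework would close if you had the smoothing estimate with a constant independent of $\epsilon$, but the route you propose to obtain it cannot deliver that uniformity, and supplying it amounts to reproving Sections~5 and~6 of the paper.
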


\subsection{Explicit representation of the semigroup}
First, we show that the reduced semigroup $\mb S_V(s)(\mb I-\mb P_V)$
inherits the decay from the free semigroup $\mb S_0$, up to an
$\epsilon$-loss. This follows from the celebrated Gearhart-Pr\"uss
theorem and the simple resolvent bound from Lemma \ref{lem:LVRes}.

\begin{lemma}
  \label{lem:SVeps}
  Let $\epsilon>0$. Then there exists a $C_\epsilon>0$ such that
  \[ \|\mb S_V(s)(\mb I-\mb P_V)\mb f\|_{\mc H}\leq C_\epsilon
    e^{\epsilon s}\|(\mb I-\mb P_V)\mb f\|_{\mc H} \]
  for all $s\geq 0$ and all $\mb f\in \mc H$.
\end{lemma}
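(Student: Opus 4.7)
The plan is to apply the Gearhart--Pr\"uss theorem on a Hilbert space to the semigroup obtained by restricting $\mb S_V(s)$ to $\mc H_-:=\mathrm{ran}(\mb I-\mb P_V)=\ker\mb P_V$. Since $\mb P_V$ is a bounded spectral projection built from the resolvent of $\mb L_V$, it commutes with $\mb L_V$ and with $\mb S_V(s)$, so $\mc H_-$ is invariant under both, and the restriction $\mb L_V^-:=\mb L_V|_{\mc H_-}$ generates the restricted semigroup. By the standard spectral decomposition one has $\sigma(\mb L_V^-)=\sigma(\mb L_V)\setminus(\sigma(\mb L_V)\cap H^+)$, where $H^+:=\{z\in\C:\Re z>0\}$; in particular $\sigma(\mb L_V^-)\subset\{z:\Re z\leq 0\}$, and the reduced resolvent $\lambda\mapsto \mb R_{\mb L_V^-}(\lambda)$ is holomorphic on all of $H^+$.

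To feed the Gearhart--Pr\"uss theorem, the task is to verify that for every $\epsilon>0$,
\[ \sup_{\Re\lambda\geq\epsilon}\|\mb R_{\mb L_V^-}(\lambda)\|_{\mc H_-\to\mc H_-}<\infty. \]
I would split $\{\Re\lambda\geq\epsilon\}$ into a compact piece $\{\epsilon\leq\Re\lambda,\ |\lambda|\leq R\}$ and a tail $\{|\lambda|>R\}$. For $R$ sufficiently large, the factorization $\lambda-\mb L_V=(\mb I-\mb L_V'\mb R_{\mb L_0}(\lambda))(\lambda-\mb L_0)$ together with the decay $\|\mb L_V'\mb R_{\mb L_0}(\lambda)\|\lesssim 1/|\lambda|$ from Lemma \ref{lem:LVRes} gives $\|\mb L_V'\mb R_{\mb L_0}(\lambda)\|\leq 1/2$ on the tail, so a Neumann series inverts $\mb I-\mb L_V'\mb R_{\mb L_0}(\lambda)$ with norm at most $2$. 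Combined with $\|\mb R_{\mb L_0}(\lambda)\|\leq 1/\Re\lambda\leq 1/\epsilon$, which follows from the contractivity of $\mb S_0$ in Lemma \ref{lem:S0} via the Laplace representation of the resolvent, this yields $\|\mb R_{\mb L_V}(\lambda)\|\leq 2/\epsilon$ on the tail, and the same bound persists after restriction to $\mc H_-$. On the compact piece, the reduced resolvent is holomorphic, hence continuous, hence uniformly bounded.

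With the uniform resolvent bound in hand, the Gearhart--Pr\"uss theorem applied to the semigroup generated by $\mb L_V^-$ on the Hilbert space $\mc H_-$ produces a constant $C_\epsilon>0$ with $\|\mb S_V(s)|_{\mc H_-}\|_{\mc H_-\to\mc H_-}\leq C_\epsilon e^{\epsilon s}$. Since $(\mb I-\mb P_V)$ has range $\mc H_-$, the identity $\mb S_V(s)(\mb I-\mb P_V)=\mb S_V(s)|_{\mc H_-}\,(\mb I-\mb P_V)$ delivers the stated bound.

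The main subtlety I expect is not any single calculation but the bookkeeping around the reduced operator: one must verify that $\mb R_{\mb L_V^-}(\lambda)$ genuinely extends holomorphically through the finitely many points of $\sigma(\mb L_V)\cap H^+$. This is exactly where it matters that the residues of $\mb R_{\mb L_V}(\lambda)$ at those eigenvalues are the finite-rank spectral projections onto $\mathrm{ran}(\mb P_V)$, which annihilate $\mc H_-$ and therefore do not contribute to the reduced resolvent. Beyond that, the Neumann series estimate on the tail and the compactness argument on the bounded region are routine.
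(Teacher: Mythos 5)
Your proposal follows the same route as the paper: restrict to $\ker\mb P_V$, establish uniform boundedness of the reduced resolvent on $\{\Re\lambda\ge\epsilon\}$ via the factorization $\lambda-\mb L_V=[\mb I-\mb L_V'\mb R_{\mb L_0}(\lambda)](\lambda-\mb L_0)$ and Lemma \ref{lem:LVRes}, and invoke Gearhart--Pr\"uss. You merely make explicit what the paper leaves implicit --- the Neumann-series bound on the tail and compactness on the remaining bounded region --- so the two arguments coincide in substance.
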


\begin{proof}
We denote by $\mb L^\mathrm{st}_V$ the
part of $\mb L_V$ in $\ker\mb P_V$. Then $\mb R_{\mb
  L_V^\mathrm{st}}(\lambda)$ is the part of $\mb R_{\mb
  L_V}(\lambda)$ in $\ker \mb P_V$. By construction, $\sigma(\mb
L_V^\mathrm{st})\cap \{z\in \C: \Re z>0\}=\emptyset$ and Lemma
\ref{lem:LVRes} together with the identity $\lambda-\mb L_V=[\mb I-\mb
L_V'\mb R_{\mb L_0}(\lambda)](\lambda-\mb L_0)$ shows that
\[ \sup\{\|\mb R_{\mb L_V^\mathrm{st}}(\lambda)\|_{\mc H}:
  \Re\lambda\geq \epsilon\}<\infty. \]
Consequently, the Gearhart-Pruess Theorem, see e.g.~\cite{EngNag00},
p.~302, Theorem 1.11, implies the claim.
\end{proof}

In the following, we denote by $\mb S_V^\mathrm{st}: [0,\infty)\to \mc
B(\ker\mb P_V)$ the reduced semigroup, i.e., $\mb S_V^\mathrm{st}(s)\mb f:=\mb
S_V(s)\mb f$ for all $\mb f\in \ker\mb P_V$.
The generator of the semigroup $\mb
S_V^\mathrm{st}$ is $\mb L_V^\mathrm{st}$, the part of $\mb L_V$ in $\ker\mb P_V$.
From Lemma \ref{lem:SVeps} and
\cite{EngNag00}, p.~234, Corollary 5.15,
we obtain the representation
\[ \mb S_V^\mathrm{st}(s)\mb f=\frac{1}{2\pi
    i}\lim_{N\to\infty}\int_{\epsilon-iN}^{\epsilon+iN}
  e^{\lambda s}\mb R_{\mb L_V^\mathrm{st}}(\lambda)\mb f d\lambda \]
for any $\epsilon>0$ and $\mb f\in \mc D(\mb L_V^\mathrm{st})$.
If we set $\mb u:=(\lambda-\mb L_V^\mathrm{st})^{-1}\mb f$, we
obtain $(\lambda-\mb L_V)\mb u=\mb f$. Formally at least, this
equation is equivalent to
\begin{equation*}
  \begin{split}
    \lambda u_1(y)-u_2(y)&=f_1(y) \\
    \lambda u_2(y)-(1-y^2)u_1''(y)+2yu_1'(y)+2yu_2'(y)+u_2(y)+V(y)u_1(y)&=f_2(y)
  \end{split}
\end{equation*}
and inserting the first equation into the second one yields
\begin{equation}
  \label{eq:specODE}
  -(1-y^2)u_1''(y)+2(\lambda+1)yu_1'(y)+\lambda(\lambda+1)u_1(y)+V(y)u_1(y)=F_\lambda(y)
\end{equation}
with $F_\lambda(y):=2y f_1'(y)+(\lambda+1)
f_1(y)+f_2(y)$.
Consequently, our next goal is to solve Eq.~\eqref{eq:specODE}.

\section{The Green function}
\noindent In order to solve Eq.~\eqref{eq:specODE}, we need to first construct a
suitable fundamental system for the homogeneous equation
\begin{equation}
\label{eq:specODEhom}
 -(1-y^2)u''(y)+2(\lambda+1)yu'(y)+\lambda(\lambda+1)u(y)+V(y)u(y)=0.
\end{equation}

\subsection{Construction of a fundamental system}

In terms of $v(y):=(1-y^2)^{\frac12(\lambda+1)}u(y)$,
Eq.~\eqref{eq:specODEhom} reads
\begin{equation}
  \label{eq:specODEv}
  v''(y)+\frac{1-\lambda^2}{(1-y^2)^2}v(y)=\frac{V(y)}{1-y^2}v(y).
\end{equation}
\begin{definition}
  For $y\in (-1,1)$ and $\lambda\in \C$ we set
  \begin{align*}
    \psi_1(y,\lambda)&:=(1-y)^{\frac12(1+\lambda)}(1+y)^{\frac12(1-\lambda)}.
 \end{align*}
\end{definition}
 Note that
  \begin{equation}
    \label{eq:psij}
    \partial_y^2\psi_1(y,\lambda)+\frac{1-\lambda^2}{(1-y^2)^2}\psi_1(y,\lambda)=0
    \end{equation}
    for all $y\in (-1,1)$ and $\lambda\in \C$.
    We construct a perturbative solution to
    Eq.~\eqref{eq:specODEv} with good control of the error near the
    singularity at $y=1$.

  \begin{proposition}
    \label{prop:v1}
    There exists a solution $v(y)=v_1(y,\lambda)$ to Eq.~\eqref{eq:specODEv} of the
    form
    \[ v_1(y,\lambda)=\psi_1(y,\lambda)[1+
      a_1(y,\lambda)], \]
    where the function $a_1$ satisfies
    $|a_1(y,\lambda)|\lesssim (1-y)^\frac12 \langle\lambda\rangle^{-1}$
    for all $y\in [0,1)$ and $\lambda\in \C$ with $\Re\lambda\geq
    -\frac14$.
    Furthermore, for all $k,\ell,m\in \N_0$, there
    exists a constant $C_{k,\ell,m}>0$ such that
    \[ |\partial_\kappa^m
      \partial_\omega^\ell\partial_y^k a_1(y,\kappa+i\omega)|\leq
      C_{k,\ell,m}(1-y)^{\frac12-k}\langle\omega\rangle^{-1-\ell} \]
    for all $y\in [0,1)$, $\omega\in \R$, and $\kappa\in [-\frac14,\frac14]$.
  \end{proposition}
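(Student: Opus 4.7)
The plan is to substitute the ansatz $v_1(y,\lambda) = \psi_1(y,\lambda)[1 + a_1(y,\lambda)]$ into Eq.~\eqref{eq:specODEv}. Using Eq.~\eqref{eq:psij}, the equation for $a_1$ reduces to $(\psi_1^2 a_1')' = V\psi_1^2(1+a_1)/(1-y^2)$. Imposing $a_1(1,\lambda) = 0$ and $\psi_1(y,\lambda)^2 a_1'(y,\lambda) \to 0$ as $y \to 1$ (both admissible for $\Re\lambda > -1$), two successive integrations and Fubini convert the ODE into the Volterra integral equation
\[ a_1(y,\lambda) = \int_y^1 K(y,x,\lambda)\, V(x)\, \bigl[1 + a_1(x,\lambda)\bigr]\, dx. \]
The kernel admits a closed form: the substitution $t \mapsto (1-t)/(1+t)$ evaluates the inner $t$-integral explicitly, giving
\[ K(y,x,\lambda) = \frac{1}{2\lambda}\bigl[r(y,x)^\lambda - 1\bigr], \qquad r(y,x) := \frac{(1-x)(1+y)}{(1+x)(1-y)}. \]
Since $r(y,x) \in [0,1]$ for $0 \leq y \leq x < 1$, the representation $r^\lambda - 1 = \lambda \log r \int_0^1 r^{s\lambda}\,ds$ yields the uniform bound $|K(y,x,\lambda)| \lesssim \langle\lambda\rangle^{-1}$ for $\Re\lambda \geq -\tfrac14$, with $\lambda=0$ a removable singularity at which $K = \tfrac12 \log r$.

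With the kernel estimate in hand, I would solve the integral equation by the Neumann series $a_1 = \sum_{n\geq 1} T^n \mathbf{1}$, where $T$ is the Volterra operator with kernel $K(y,x,\lambda) V(x)$. The standard nested-integral argument gives $|T^n \mathbf{1}(y)| \lesssim C^n (1-y)^n/(n!\, \langle\lambda\rangle^n)$, so the series converges absolutely and produces $|a_1(y,\lambda)| \lesssim (1-y)/\langle\lambda\rangle$, which is even stronger than the claimed bound. Smoothness of $a_1$ in all its variables on $[0,1) \times \{\Re\lambda > -\tfrac14\}$ follows from uniform convergence of the termwise differentiated series.

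For the derivative estimates I would differentiate the integral equation. The clean identities $\partial_y \log r = 2/(1-y^2)$, $\partial_x \log r = -2/(1-x^2)$, and $K(y,y,\lambda) = 0$ (which kills the boundary term at $x=y$ when $\partial_y$ is applied) give $\partial_y K(y,x,\lambda) = r(y,x)^\lambda/(1-y^2)$, so each $y$-derivative contributes a factor $(1-y)^{-1}$ and iteration yields the $(1-y)^{1/2-k}$ behavior. The $\langle\omega\rangle^{-1-\ell}$ improvement is obtained by exploiting that $r^\lambda = r^\kappa e^{i\omega\log r}$ is oscillatory in $x$: the identity
\[ e^{i\omega\log r} = -\frac{1-x^2}{2i\omega}\,\partial_x e^{i\omega\log r} \]
allows one to integrate by parts in $x$, each step gaining a factor $\omega^{-1}$, with the weight $(1-x^2)$ neutralizing the singularity of $r^\kappa$ at $x=1$ so that the right boundary vanishes. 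At the left endpoint $r(y,y) = 1$, whence every surviving boundary contribution is $\omega$-independent and contains the expected $(1-y)^{1/2-k}$ powers. The $\kappa$-derivatives bring down factors of $\log r$ having only integrable logarithmic singularities at $x=1$, and are absorbed into the estimate after IBP.

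The main obstacle will be the combinatorial bookkeeping in the induction on $(k,\ell,m)$: differentiating the integral equation $k$ times in $y$, $\ell$ times in $\omega$ and $m$ times in $\kappa$ produces polynomially many terms, each a nested integral containing $(\log r)^j$ factors along with various powers of $(1-x^2)$ and $(1-y^2)$, and one must verify that after the appropriate number of integrations by parts every such term is dominated by a constant multiple of $(1-y)^{1/2-k}\langle\omega\rangle^{-1-\ell}$, paying particular attention to the boundary contributions at $x=y$ and to ensuring that the IBPs never produce a non-integrable singularity at $x=1$.
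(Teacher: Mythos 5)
Your Volterra reduction is the same as the paper's: write $v_1 = \psi_1 h_1$ with $h_1 = 1+a_1$, integrate the first-order ODE $(\psi_1^2 a_1')' = V\psi_1^2(1+a_1)/(1-y^2)$ twice with vanishing boundary data at $y=1$, and swap the order of integration. Modulo a sign slip (the kernel comes out as $\tfrac{1}{2\lambda}[1-r^\lambda]V(x)$, not $\tfrac{1}{2\lambda}[r^\lambda-1]$) and the fact that you separated the factor $V(x)$, your closed form for the kernel is exactly Eq.~\eqref{eq:kernelK} in the paper. So up to this point the two arguments coincide.

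There is, however, a genuine gap in your kernel estimate. The claimed uniform bound $|K(y,x,\lambda)|\lesssim \langle\lambda\rangle^{-1}$ is false. You yourself observe that $K\to\tfrac12\log r$ as $\lambda\to 0$, and since $r(y,x)\to 0$ as $x\to 1$, this is unbounded in $x$; for small $|\lambda|$ the kernel carries a $\log(1-x)$ singularity. Moreover, for $\Re\lambda\in[-\tfrac14,0)$ one has $|r^\lambda| = r^{\Re\lambda}\sim (1-x)^{\Re\lambda}$, which likewise blows up as $x\to 1$. The correct statement, and the one the paper uses, is $|K(y,x,\lambda)|\lesssim (1-x)^{-1/4}|\lambda|^{-1}$ for $|\lambda|\geq 1$ and $|K(y,x,\lambda)|\lesssim (1-x)^{-1/2}$ for $|\lambda|\leq 1$. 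Both singularities are integrable, so the Volterra existence theorem still applies, but your asserted improvement $|a_1(y,\lambda)|\lesssim (1-y)\langle\lambda\rangle^{-1}$ does not follow; the honest output of the iteration is $(1-y)^{3/4}\langle\lambda\rangle^{-1}$ (which is still more than enough for the stated $(1-y)^{1/2}$).

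For the derivative bounds you take a genuinely different route: repeated integration by parts in $x$ against the oscillatory phase $e^{i\omega\log r}$. The paper instead performs the change of variables $y=\varphi(\eta)$, $x=\varphi(\xi)$ with $\varphi(\xi)=\tfrac{1-e^{-\xi}}{1+e^{-\xi}}$, so that $r(y,x)^\lambda = e^{-\lambda(\xi-\eta)}$, and then rescales $\xi=\omega^{-1}\xi'+\eta$; after this substitution $\omega$ enters only through the explicit prefactor $\omega^{-1}$ and through the combination $\kappa\omega^{-1}$ in the phase, so every $\partial_\omega$ visibly produces an extra $\langle\omega\rangle^{-1}$ without any IBP. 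Your IBP plan is plausible and the key structural observations ($K(y,y,\lambda)=0$, the weight $1-x^2$ killing the boundary term at $x=1$) are correct, but it is considerably harder to push through: each $\partial_\omega$ of the Volterra equation produces $\partial_\omega a_1$ under the integral, so you actually need a coupled Volterra system for all the partial derivatives and must verify that the IBP does not corrupt the Volterra (triangular) structure, and you must track the $\lambda^{-1}$ prefactors, the $\log r$ factors from $\partial_\kappa$ and $\partial_\omega$, and the $(1-x^2)^{-1}$ factors from $\partial_x$ through arbitrarily many iterations. The paper's substitution sidesteps all of this by moving the $\omega$-dependence out of the exponent at the start. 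Your proposal acknowledges the combinatorics as the main obstacle; in practice this is where the real work is, and it is precisely what the paper's change of variables is designed to avoid.
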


  \begin{proof}
    To begin with, we assume $\lambda\not= 0$ and define
    \[ \psi_0(y,\lambda):=\psi_1(y,\lambda)-\psi_1(y,-\lambda). \]
    Note that
    $W(\psi_0(\cdot,\lambda),\psi_1(\cdot,\lambda))=2\lambda$.
    Consequently,
by the variation of parameters formula and
Eq.~\eqref{eq:psij}, $v_1$ has to satisfy the integral equation
\begin{equation}
  \label{eq:Volterrav1}
  v_1(y,\lambda)=\psi_1(y,\lambda)+\int_y^1
                  \frac{\psi_0(y,\lambda)\psi_1(x,\lambda)
                  -\psi_0(x,\lambda)\psi_1(y,\lambda)}{2\lambda}
                  \frac{V(x)}{1-x^2}v_1(x,\lambda)dx
                \end{equation}
                for all $y\in [0,1)$.
Conversely, any continuous solution to Eq.~\eqref{eq:Volterrav1}
belongs to $C^2([0,1))$ and solves Eq.~\eqref{eq:specODEv}.                
We write $v_1=\psi_1h_1$. Then, Eq.~\eqref{eq:Volterrav1} is equivalent
to the Volterra equation
\begin{equation}
  \label{eq:Volterrah1}
  h_1(y,\lambda)=1+\int_y^1 K(y,x,\lambda)h_1(x,\lambda)dx
  \end{equation}
with the kernel
\begin{equation}
  \label{eq:kernelK}
  \begin{split}
  K(y,x,\lambda)&=\frac{1}{2\lambda}\left
    [\frac{\psi_0(y,\lambda)}{\psi_1(y,\lambda)}\psi_1(x,\lambda)^2
    -\psi_0(x,\lambda)\psi_1(x,\lambda)\right ]\frac{V(x)}{1-x^2} \\
  &=\frac{1}{2\lambda}\left [1-\left
      (\frac{1-y}{1+y}\right)^{-\lambda}\left
      (\frac{1-x}{1+x}\right)^\lambda\right ]V(x).
  \end{split}
  \end{equation}
We have the bound
$|K(y,x,\lambda)|\lesssim (1-x)^{-\frac14}|\lambda|^{-1}$
for all $0\leq y\leq x< 1$ and all $\lambda\in \C\setminus\{0\}$ with
$\Re\lambda\geq -\frac14$. If, in addition, $|\lambda|\geq 1$, this yields
\[ \int_0^1 \sup_{y\in (0,x)}|K(y,x,\lambda)|dx\lesssim
  |\lambda|^{-1}\lesssim 1 \]
and
the Volterra existence theorem (see e.g.~\cite{SchSofSta10}, Lemma
2.4) shows that
Eq.~\eqref{eq:Volterrah1} has a solution $h_1(\cdot,\lambda)\in L^\infty(0,1)$
satisfying
$\|h_1(\cdot,\lambda)\|_{L^\infty(0,1)}\lesssim 1$ for all $\lambda\in
\C$ with $\Re\lambda\geq -\frac14$ and $|\lambda|\geq 1$.
 It follows that $h_1(\cdot,\lambda)\in C([0,1])$ and
 \begin{align*}
   |h_1(y,\lambda)-1|
   &\lesssim \int_y^1 |K(x,y,\lambda)||h_1(x,\lambda)|dx 
     \lesssim
       |\lambda|^{-1}\|h_1(\cdot,\lambda)\|_{L^\infty(0,1)}\int_y^1
       (1-x)^{-\frac14}dx \\
   &\lesssim
   (1-y)^\frac34|\lambda|^{-1}\lesssim (1-y)^\frac34\langle\lambda\rangle^{-1},
   \end{align*}
   which implies the claimed estimate on $a_1$.

The difficulty in proving the derivative bounds in the regime
$|\lambda|\geq 1$ lies with the fact
that $\lambda=\kappa+i\omega$ appears in the exponent in
Eq.~\eqref{eq:kernelK}. Thus, it seems that differentiating
with respect to $\omega$ does not improve the decay in
$\omega$. This problem can be dealt with by a suitable change of variables.
More precisely, we consider the diffeomorphism $\varphi: (0,\infty)\to
(0,1)$, $\varphi(\xi):=\frac{1-e^{-\xi}}{1+e^{-\xi}}$, with inverse
$\varphi^{-1}(x)=-\log\frac{1-x}{1+x}$.
We write $\lambda=\kappa+i\omega$ and
it suffices to consider the case $\omega\geq 1$.
Then we may rewrite Eq.~\eqref{eq:Volterrah1} as
\begin{align*}
  h_1(\varphi(\eta),\lambda)
  &=\int_{\eta}^\infty
    K(\varphi(\eta),\varphi(\xi),\lambda)h_1(\varphi(\xi),\lambda)\varphi'(\xi)d\xi
  \\
  &=\frac{1}{\omega}\int_0^\infty
    K(\varphi(\eta),\varphi(\omega^{-1}\xi+\eta),\lambda)
\varphi'(\omega^{-1}\xi+\eta)
    h_1\big (\varphi(\omega^{-1}\xi+\eta),\lambda\big )
    d\xi \\
  &=\frac{1}{2\lambda\omega}\int_0^\infty
    \left (1-e^{-(\kappa\omega^{-1}+i)\xi}\right )V\big
    (\varphi(\omega^{-1}\xi+\eta)\big )\varphi'
    (\omega^{-1}\xi+\eta) \\
  &\qquad\qquad \times 
     h_1\big (\varphi(\omega^{-1}\xi+\eta),\lambda\big )d\xi
\end{align*}
and from this representation the derivative bounds follow inductively.

In the case $|\lambda|\leq 1$ we need to argue differently due to the
apparent singularity of $K(y,x,\lambda)$ at $\lambda=0$. In fact, this
singularity is removable because $\psi_0(y,0)=0$. In order to exploit
this, we first note that
\[
  \partial_t\psi_1(y,t\lambda)=\frac{\lambda}{2}
  \log\left(\frac{1-y}{1+y}\right)\psi_1(y,t\lambda)
  \] for $t\in \R$ and then we use the fundamental theorem of calculus to write
\[ \psi_0(y,\lambda)=\int_0^1 \partial_t
  \psi_0(y,t\lambda)dt=\lambda\widetilde\psi_0(y,\lambda) \]
with
\[ \widetilde\psi_0(y,\lambda):=\frac12\log\left(\frac{1-y}{1+y}\right )
  \int_0^1
  \left [\psi_1(y,t\lambda)+\psi_1(y,-t\lambda)\right ]dt. \]
We have the bound
\begin{align*}
  |\widetilde\psi_0(y,\lambda)|
  &\lesssim
  |\log(1-y)|\int_0^1
    \left [(1-y)^{\frac12(1+t\Re\lambda)}+(1-y)^{\frac12(1-t\Re\lambda)}\right
    ]dt \\
  &\lesssim |\log(1-y)|\left [(1-y)^{\frac12(1+\Re\lambda)}+(1-y)^{\frac12(1-\Re\lambda)}\right]
  \end{align*}
and thus,
\[ |K(y,x,\lambda)|\lesssim |\log(1-x)|(1-x)^{-\frac14}\lesssim (1-x)^{-\frac12} \]
for all $0\leq y\leq x<1$ and $\lambda\in \C$ with $|\lambda|\leq 1$, $\Re\lambda\geq
-\frac14$.
Consequently, a Volterra iteration yields the stated estimate for
$a_1$. For the derivative bounds it suffices to note that
each derivative with respect to $\omega$ or $\kappa$ produces a
singular term $\log(1-x)$ which, however, is
harmless since $|\log(1-x)|^n (1-x)^{-\frac14}\leq C_n
(1-x)^{-\frac12}$ for any $n\in \N_0$. Consequently, the derivative
bounds follow inductively.
\end{proof}

\begin{definition}
  In the following, $v_1$ always refers to the solution constructed in Proposition \ref{prop:v1}.
\end{definition}

Proposition \ref{prop:v1} shows that the error $a_1$ \emph{improves}
upon differentiation with respect to $\omega$.
On the other hand, when differentiating with respect to $y$, the
bounds get worse. Both operations have in common that taking a
derivative results in the loss of one power of the respective variable
in the estimate.
This is a crucial
property and we introduce a more economical notation to keep track of
this behavior.

    \begin{definition}
      For $\alpha,\beta\in \R$, we write $f(y,\omega)=\mc
      O((1-y)^\alpha\langle\omega\rangle^\beta)$ if for all
      $k,\ell\in \N_0$ there exist constants $C_{k,\ell}>0$ such that
      \[ |\partial_\omega^\ell\partial_y^k f(y,\omega)|\leq
        C_{k,\ell}(1-y)^{\alpha-k}\langle\omega\rangle^{\beta-\ell}
      \]
      in a range of the variables $y$ and $\omega$ that is specified
      explicitly or follows from the context. In other words, the $\mc
      O$-terms may be formally differentiated. Such functions are
      said to be of \emph{symbol type}. We also use self-explanatory
      variants of this notation.
    \end{definition}

\begin{remark}
  \label{rem:Wv1}
  By Proposition \ref{prop:v1} we have, with $\omega=\Im\lambda$,
  \begin{align*}
    W(v_1(\cdot,\lambda),v_1(\cdot,-\lambda))
    &=W(\psi_1(\cdot,\lambda),\psi_1(\cdot,-\lambda))[1+\mc
      O((1-y)^\frac12\langle\omega\rangle^{-1})] \\
    &\quad +\psi_1(y,\lambda)\psi_1(y,-\lambda)\mc
      O((1-y)^{-\frac12}\langle\omega\rangle^{-1}) \\
    &=2\lambda[1+\mc O((1-y)^\frac12\langle\omega\rangle^{-1})]+\mc
      O((1-y)^\frac12\langle\omega\rangle^{-1})
  \end{align*}
  for all $y\in [0,1)$ and $\lambda\in \C$ with $|\Re\lambda|\leq \frac14$.
  This expression is in fact independent of $y$ and thus, we may
  evaluate it at $y=1$ which yields
  \[ W(v_1(\cdot,\lambda),v_1(\cdot,-\lambda))=2\lambda. \]
\end{remark}
    
The bounds on the derivatives of $v_1$ are sufficient for our purposes
and easy to work with
but
certainly not optimal, as the following result shows.

\begin{lemma}
  \label{lem:v1smooth}
  Let $\Re\lambda\geq -\frac14$. Then we have
  \[ \frac{v_1(\cdot,\lambda)}{\psi_1(\cdot,\lambda)}\in
    C^\infty([0,1]). \]
\end{lemma}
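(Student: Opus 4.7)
The plan is to use an integrating factor to extract a clean integral representation for $\partial_y[v_1(\cdot,\lambda)/\psi_1(\cdot,\lambda)]$, and then bootstrap smoothness near $y = 1$ from the continuity and decay already furnished by Proposition~\ref{prop:v1}. Substituting $v_1 = \psi_1 h_1$ into Eq.~\eqref{eq:specODEv} and using Eq.~\eqref{eq:psij} to cancel the terms involving $\psi_1''$ leaves
\[
(1-y^2)\, h_1''(y,\lambda) - 2(y+\lambda)\, h_1'(y,\lambda) - V(y)\, h_1(y,\lambda) = 0
\]
for $h_1 := v_1/\psi_1$ on $(-1,1)$. A short calculation gives $(\psi_1^2)'/\psi_1^2 = -2(y+\lambda)/(1-y^2)$, so $\psi_1^2 = (1-y)^{1+\lambda}(1+y)^{1-\lambda}$ serves as an integrating factor, and the ODE rewrites as
\[
\bigl(\psi_1(y,\lambda)^2\, h_1'(y,\lambda)\bigr)' = \frac{\psi_1(y,\lambda)^2\, V(y)}{1-y^2}\, h_1(y,\lambda).
\]

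Integrating from $y$ to $1$ — where the boundary term at $y = 1$ vanishes since $|\psi_1(y,\lambda)^2 h_1'(y,\lambda)| \lesssim (1-y)^{1+\Re\lambda}(1-y)^{-1/2} \to 0$ in view of the derivative bound in Proposition~\ref{prop:v1} — and then switching to $w := 1-y$, $u := 1-x$, and the rescaled variable $u = ws$, with $\tilde h(w) := h_1(1-w,\lambda)$, I obtain
\[
\tilde h'(w) = (2-w)^{\lambda-1}\,\rho(w), \qquad \rho(w) := \int_0^1 s^\lambda\, (2-ws)^{-\lambda}\, V(1-ws)\, \tilde h(ws)\, ds.
\]
The integrand of $\rho$ is dominated by $s^{\Re\lambda}$, which is integrable on $(0,1)$ because $\Re\lambda \geq -\tfrac14 > -1$, while the factors $(2-w)^{\lambda-1}$ and $(2-ws)^{-\lambda}$ are $C^\infty$ in $w$ near $w = 0$, uniformly in $s \in [0,1]$.

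The bootstrap is now immediate. Proposition~\ref{prop:v1} furnishes $\tilde h \in C([0,\delta])$ with $\tilde h(0) = 1$. Assuming inductively that $\tilde h \in C^k([0,\delta])$, I differentiate under the integral sign using $\partial_w^j[\tilde h(ws)] = s^j\, \tilde h^{(j)}(ws)$ together with the still-integrable dominant $s^{\Re\lambda + j}$, obtaining $\rho \in C^k([0,\delta])$, whence $\tilde h' = (2-w)^{\lambda-1}\rho \in C^k([0,\delta])$ and $\tilde h \in C^{k+1}([0,\delta])$. Iterating yields $\tilde h \in C^\infty([0,\delta])$, and together with the classical smoothness of $\tilde h$ on $[\delta,1]$ (the ODE being regular there), this gives $h_1 \in C^\infty([0,1])$, as claimed. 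The one step requiring genuine care is the vanishing of the boundary term $\psi_1^2 h_1'$ at $y = 1$; once that is settled via the a priori bounds from Proposition~\ref{prop:v1}, the explicit formula for $\tilde h'$ reduces the lemma to a routine differentiation-under-the-integral bootstrap.
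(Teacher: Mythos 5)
Your proof tracks the paper's argument closely and arrives at exactly the same explicit formula for $h_1'=\partial_y(v_1/\psi_1)$ and the same rescaled integral representation (your substitution $u=ws$ is the paper's $x=y+t(1-y)$ with $s=1-t$). The substantive difference is only in how the first-order formula is obtained: the paper differentiates the Volterra equation \eqref{eq:Volterrah1} with kernel \eqref{eq:kernelK} directly --- the diagonal value $K(y,y,\lambda)$ vanishes, so no boundary term ever appears --- while you go back to the second-order ODE for $h_1$, use $\psi_1^2$ as integrating factor, and integrate once from $y$ to $1$. This is legitimate, but it makes you responsible for showing that $\psi_1^2 h_1'$ vanishes at $y=1$.

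That step, as written, has a gap. You cite the $y$-derivative bound on $a_1$ from Proposition~\ref{prop:v1} to get $|h_1'|\lesssim(1-y)^{-1/2}$, but Proposition~\ref{prop:v1} establishes that bound only for $\kappa=\Re\lambda\in[-\tfrac14,\tfrac14]$, whereas the lemma is asserted for all $\Re\lambda\geq-\tfrac14$. For $\Re\lambda>\tfrac14$ the estimate you invoke is not available. The gap is easy to close: the limit $c:=(\psi_1^2 h_1')(1^-)$ exists because $(\psi_1^2 h_1')'=\frac{\psi_1^2 V}{1-y^2}h_1$ is integrable near $1$, and $c\neq 0$ is incompatible with the bound $|h_1-1|=|a_1|\lesssim(1-y)^{1/2}$ of Proposition~\ref{prop:v1} (which \emph{does} hold for all $\Re\lambda\geq-\tfrac14$), since $c\neq 0$ would force $|h_1(y)-1|\gtrsim(1-y)^{-\Re\lambda}$ for $\Re\lambda\neq 0$, or a non-decaying oscillation when $\Re\lambda=0$. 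Alternatively, you can sidestep the issue entirely by differentiating the Volterra equation as the paper does. Once the first-order formula is in hand, your change of variables and the inductive differentiation under the integral sign are correct and match the paper's.
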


\begin{proof}
As in the proof of Proposition \ref{prop:v1} we write $v_1=\psi_1 h_1$ and
from Eqs.~\eqref{eq:Volterrah1} and \eqref{eq:kernelK}, we obtain
\[ \partial_y h_1(y,\lambda)=-\frac{1}{1-y^2}\left
    (\frac{1-y}{1+y}\right)^{-\lambda}
\int_y^1 \left (\frac{1-x}{1+x}\right )^\lambda
V(x)h_1(x,\lambda)dx. \]
The change of variables $x=y+t(1-y)$ yields
\[ \partial_y h_1(y,\lambda)=-(1+y)^{\lambda-1}\int_0^1 \left
    (\frac{1-t}{1+y+t(1-y)}\right)^\lambda
  V\big (y+t(1-y)\big )h_1\big (y+t(1-y),\lambda\big )dt \]
and from this expression the statement follows inductively.
\end{proof}

 The solution $v_1$ is sufficient to
  construct the Green function for Eq.~\eqref{eq:specODE}.

  \begin{definition}
   For $y\in [0,1)$ and $\lambda\in\C$ with $\Re\lambda\geq -\frac14$
   we set
   \[
     u_1(y,\lambda):=(1-y^2)^{-\frac12(1+\lambda)}v_1(y,\lambda). \]
   Furthermore, for $|\Re\lambda|\leq \frac14$, we define
    \[  u_0(y,\lambda):=(1-y^2)^{-\frac12(1+\lambda)}\left [v_1(0,-\lambda)
                v_1(y,\lambda)-v_1(0,\lambda) v_1(y,-\lambda) \right ]. \]
  \end{definition}

\subsection{Regularity theory}
  
We take up the opportunity to establish the link between
$\Sigma_V$, see Definition \ref{def:SigmaV}, and the spectrum of $\mb
L_V$.
The key observation in this respect is a regularity result for
the operator $\mb L_V$.

\begin{lemma}
  \label{lem:regef}
For any $\lambda\in \C$ with $\Re\lambda\geq 0$, we have
$\ker(\lambda-\mb L_V)\subset \widetilde{\mc H}$.
\end{lemma}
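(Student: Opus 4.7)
The plan is to propagate membership in $\mc D(\mb L_V)$ up to arbitrarily high powers, thereby obtaining interior smoothness, and then to rule out endpoint singularities by analyzing the second-order ODE for the first component and showing that any solution with an irregular singularity at $\pm 1$ fails the energy condition whenever $\Re\lambda\geq 0$.

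\emph{Step 1 (Interior regularity and reduction to an ODE).} If $\mb f\in\ker(\lambda-\mb L_V)$, then $\mb L_V\mb f=\lambda\mb f\in\mc D(\mb L_V)$, and inductively $\mb f\in\mc D(\mb L_V^n)$ for every $n\in\N$. Corollary \ref{cor:regL0} and Remark \ref{rem:identify} then identify $\mb f$ with an odd pair in $C^\infty(-1,1)\times C^\infty(-1,1)$. The proof of Corollary \ref{cor:actL0} applies verbatim to $\mb L_V=\mb L_0+\mb L_V'$ since $\mb L_V'$ is bounded, so the eigenvalue equation holds pointwise: $f_2=\lambda f_1$ and
\[ -(1-y^2)f_1''(y)+2(\lambda+1)yf_1'(y)+[\lambda(\lambda+1)+V(y)]f_1(y)=0\quad\text{on }(-1,1). \]

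\emph{Step 2 (A smooth solution near $y=1$, and a singular partner).} By Proposition \ref{prop:v1} and Lemma \ref{lem:v1smooth}, the function $u_1(y,\lambda)=(1-y^2)^{-(1+\lambda)/2}v_1(y,\lambda)$ equals $(1+y)^{-\lambda}\cdot (v_1/\psi_1)(y,\lambda)$, hence lies in $C^\infty([0,1])$; moreover $a_1(1,\lambda)=0$ forces $u_1(1,\lambda)=2^{-\lambda}\neq 0$. Abel's formula gives Wronskian $W(y)\propto (1-y^2)^{-(\lambda+1)}$, so reduction of order from $u_1$ produces an independent solution
\[ w(y,\lambda):=u_1(y,\lambda)\int_{1/2}^y\frac{(1-t^2)^{-(\lambda+1)}}{u_1(t,\lambda)^2}\,dt. \]
Since $u_1(t,\lambda)\to 2^{-\lambda}\neq 0$ as $t\to 1^-$, a direct asymptotic computation shows $w'(y,\lambda)\sim c_\lambda(1-y)^{-1-\lambda}$ as $y\to 1^-$ (with a logarithmic correction in the exceptional case $\lambda=0$), where $c_\lambda\neq 0$. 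Consequently $(1-y^2)^{1/2}w'(\cdot,\lambda)\notin L^2(1/2,1)$ whenever $\Re\lambda\geq 0$.

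\emph{Step 3 (Conclusion).} On a left neighborhood of $1$, write $f_1=c_1 u_1(\cdot,\lambda)+c_2 w(\cdot,\lambda)$. The condition $\|\mb f\|_{\mc H}<\infty$ forces $(1-y^2)^{1/2}f_1'\in L^2(-1,1)$, so $c_2=0$ and $f_1$ extends smoothly to $y=1$. Because $V$ is even and $f_1$ is odd, the function $y\mapsto -f_1(-y)=f_1(y)$ satisfies the same ODE, so the identical argument applied at $y=-1$ gives smoothness there. Therefore $f_1\in C^\infty([-1,1])$, and $f_2=\lambda f_1\in C^\infty([-1,1])$; both components are odd, whence $\mb f\in\widetilde{\mc H}$.

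The main obstacle is the asymptotic analysis in Step 2: one must verify the singular behavior of $w'$ uniformly in $\lambda$ with $\Re\lambda\geq 0$, including the subcases $\lambda\in\N_0$ where the indicial roots $0$ and $-\lambda$ are integer-spaced and logarithmic terms enter. The explicit Abel-Wronskian weight $(1-y^2)^{-(\lambda+1)}$ together with the non-vanishing $u_1(1,\lambda)=2^{-\lambda}$ keeps this computation elementary and uniform across all cases.
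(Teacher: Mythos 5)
Your proof is correct and follows essentially the same route as the paper's: inductive membership in $\mc D(\mb L_V^n)$ gives interior smoothness, the eigenvalue equation reduces to the ODE \eqref{eq:specODEhom} for $f_1$, and then a reduction-of-order partner to the smooth solution $u_1(\cdot,\lambda)$ is shown to have a non-square-integrable energy density near $y=1$, forcing the singular coefficient to vanish. The only difference is cosmetic: you fix the base point of the reduction-of-order integral at $1/2$, whereas the paper chooses a $c\in(0,1)$ with $|u_1(y,\lambda)|>0$ on $[c,1]$ to guarantee the integral is well-defined; since you only assert that $u_1(1,\lambda)=2^{-\lambda}\neq 0$ (correct, by the bound on $a_1$), you should similarly restrict to a neighborhood of $1$ where $u_1$ is nonvanishing, as nonvanishing on all of $[1/2,1]$ is not established. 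Your added remarks on the asymptotics of $w'$ and the integer-gap cases $\lambda\in\N_0$ fill in details the paper leaves implicit; in fact no logarithmic correction is needed for $w'$ even at $\lambda=0$, because the dominant contribution to $w'$ comes from the algebraic term $(1-y^2)^{-(\lambda+1)}/u_1(y,\lambda)$, not from differentiating the (possibly logarithmic) integral.
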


\begin{proof}
  Let $\mb f\in \ker(\lambda-\mb L_V)$, i.e., $\mb f\in \mc D(\mb
  L_0)$ and $\mb L_V \mb f=\lambda \mb f$. 
Inductively, this implies that $\mb f\in \mc D(\mb L_V^n)$ 
for any $n\in \N_0$. By Corollary \ref{cor:regL0} and Remark
  \ref{rem:identify}, $\mb f\in C^\infty(-1,1)\times C^\infty(-1,1)$
  and Corollary \ref{cor:actL0} shows that $(\lambda-\mathfrak L_0-\mb
  L_V')\mb f=0$. As a consequence, $f_2=\lambda f_1$ and 
$f_1$ is an odd solution of
  Eq.~\eqref{eq:specODEhom} on $(-1,1)$. Hence, it remains to show
  that $f_1\in C^\infty([-1,1])$.
  By definition,
  \[
    u_1(y,\lambda)=(1-y^2)^{-\frac12(1+\lambda)}
    \psi_1(y,\lambda)\frac{v_1(y,\lambda)}{\psi_1(y,\lambda)}
    =(1+y)^{-\lambda}\frac{v_1(y,\lambda)}{\psi_1(y,\lambda)}
  \]
  and Lemma \ref{lem:v1smooth} shows that $u_1(\cdot,\lambda)\in C^\infty([0,1])$.
Furthermore, by Proposition \ref{prop:v1},
  $u_1(y,\lambda)=(1+y)^{-\lambda}[1+
  O((1-y)^\frac12\langle\lambda\rangle^{-1})]$. In particular, there
  exists a $c\in (0,1)$ such that $|u_1(y,\lambda)|>0$ for
  all $y\in [c,1]$
and we set
\[ \widetilde u_1(y,\lambda):=u_1(y,\lambda)\int_c^y
  \frac{(1-x^2)^{-1-\lambda}}{u_1(x,\lambda)^2}dx. \]
Then 
$\{u_1(\cdot,\lambda),\widetilde
u_1(\cdot,\lambda)\}$ is a fundamental system for
Eq.~\eqref{eq:specODEhom} on $[c,1)$. As a consequence, there exist
constants $a,b\in \C$ such that
\[ f_1(y)=a u_1(y,\lambda)+b\widetilde u_1(y,\lambda) \]
for $y\in [c,1)$.
We have
$|\partial_y \widetilde u_1(y,\lambda)|\gtrsim (1-y)^{-1-\Re\lambda}$
for $y\in [c,1)$
and thus,
\[ \int_c^1 (1-y^2)|\partial_y \widetilde u_1(y,\lambda)|^2dy
\gtrsim \int_c^1 (1-y)^{-1-2\Re\lambda}dy=\infty. \]
Consequently, since $\|\mb f\|_{\mc H}<\infty$, we must have $b=0$ and
therefore, $f_1\in C^\infty([-1,1])$.
\end{proof}

\begin{lemma}
  \label{lem:SigmaV}
  We have
  \[ \Sigma_V=\sigma_p(\mb L_V)\cap \{z\in \C: \Re z\geq 0\}. \]
\end{lemma}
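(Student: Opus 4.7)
The plan is to establish the equality by two inclusions, both of which rest almost entirely on Lemma~\ref{lem:regef} and a direct computation with the generator $\widetilde{\mb L}_V = \widetilde{\mb L}_0 + \mb L_V'$ on $\widetilde{\mc H}$.

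For the inclusion $\sigma_p(\mb L_V)\cap\{\Re z\geq 0\}\subset\Sigma_V$, I would take $\lambda$ in the left-hand set and pick a nontrivial eigenfunction $\mb f=(f_1,f_2)\in\mc D(\mb L_V)=\mc D(\mb L_0)$ with $\mb L_V\mb f=\lambda\mb f$. Lemma~\ref{lem:regef} puts $\mb f$ into $\widetilde{\mc H}$, so $f_1,f_2$ are odd and smooth on $[-1,1]$; its proof moreover shows that $f_2=\lambda f_1$ and that $f_1$ satisfies the ODE of Definition~\ref{def:SigmaV} on $(-1,1)$. If $f_1\equiv 0$ then $f_2=\lambda f_1\equiv 0$ contradicts nontriviality, so $f_1$ is a nontrivial odd smooth solution of the ODE and hence $\lambda\in\Sigma_V$.

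For the reverse inclusion $\Sigma_V\subset\sigma_p(\mb L_V)\cap\{\Re z\geq 0\}$, suppose $\lambda\in\Sigma_V$ with witness $f_\lambda\in C^\infty([-1,1])$ odd and nontrivial. Set $\mb f:=(f_\lambda,\lambda f_\lambda)$. Then $\mb f\in\widetilde{\mc H}\subset\mc D(\widetilde{\mb L}_0)\subset\mc D(\mb L_0)=\mc D(\mb L_V)$, and since $\mb f\in\widetilde{\mc H}$ the action of $\mb L_V$ on $\mb f$ coincides with the classical action $\mathfrak L_0\mb f+\mb L_V'\mb f$. The first component gives $\lambda f_\lambda$ as required, while the second component reads
\[
(1-y^2)f_\lambda''(y)-2(\lambda+1)y f_\lambda'(y)-\lambda f_\lambda(y)-V(y)f_\lambda(y),
\]
which equals $\lambda^2 f_\lambda(y)$ exactly by the ODE defining $\Sigma_V$. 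Hence $\mb L_V\mb f=\lambda\mb f$, and $\mb f\neq 0$ because $f_\lambda\not\equiv 0$, so $\lambda\in\sigma_p(\mb L_V)$; by construction $\Re\lambda\geq 0$.

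There is really no serious obstacle here: once Lemma~\ref{lem:regef} is in hand, the first direction is immediate, and the second direction is a routine verification, with the only mildly delicate point being to confirm that $(f_\lambda,\lambda f_\lambda)\in\widetilde{\mc H}$ (hence in $\mc D(\mb L_V)$) and to match the second component of $\widetilde{\mb L}_V\mb f$ with $\lambda^2 f_\lambda$ using the precise form of the ODE in Definition~\ref{def:SigmaV}.
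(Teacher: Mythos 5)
Your proof is correct and follows essentially the same route as the paper's: both directions reduce to Lemma~\ref{lem:regef} plus a direct computation with $\widetilde{\mb L}_V=\widetilde{\mb L}_0+\mb L_V'$ on $\widetilde{\mc H}$. The only difference is that you spell out the ODE verification in the $\Sigma_V\subset\sigma_p(\mb L_V)$ direction and the nontriviality of $f_1$ explicitly, which the paper leaves implicit.
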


\begin{proof}
  Let $\lambda\in \Sigma_V$. Then $\Re\lambda\geq 0$ and there exists
  a nontrivial, odd $f_\lambda\in C^\infty([-1,1])$ that satisfies
  Eq.~\eqref{eq:specODEhom} for all $y\in (-1,1)$. We set $\mb
  f:=(f_\lambda,\lambda f_\lambda)$. Then $\mb f\in \widetilde{\mc H}$
 and $(\lambda-\widetilde{\mb L}_0-\mb L_V')\mb f=0$,
  which implies that $\lambda\in \sigma_p(\mb L_V)$.

  Conversely, if $\Re\lambda\geq 0$ and $\lambda\in \sigma_p(\mb
  L_V)$, Lemma \ref{lem:regef} implies that there exists a nontrivial
  $\mb f=(f_1, f_2)\in \widetilde{\mc H}$ such that
  $(\lambda-\widetilde{\mb L}_0-\mb L_V')\mb f=0$. In other words,
  $f_2=\lambda f_1$ and $f_1$ is a nontrivial solution of
  Eq.~\eqref{eq:specODEhom}. Consequently, $\lambda\in \Sigma_V$.
\end{proof}

Next, we relate the point spectrum of $\mb L_V$ to the value of
$u_1(y,\lambda)$ at $y=0$.
  
\begin{lemma}
  \label{lem:specpLV}
  Let $\lambda\in \C$ with $\Re\lambda \geq 0$. If
  $u_1(0,\lambda)=0$ then $\lambda\in
  \sigma_p(\mb L_V)$. 
\end{lemma}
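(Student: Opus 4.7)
The plan is to promote $u_1(\cdot,\lambda)$ to an odd smooth eigenfunction on all of $[-1,1]$ by reflecting it across $y=0$, and then invoke Lemma~\ref{lem:SigmaV}.

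First, since $V$ is even, the coefficients of the ODE \eqref{eq:specODEhom} are even in $y$, so the ODE is invariant under the substitution $y\mapsto -y$: whenever $u$ solves \eqref{eq:specODEhom} on some interval $I\subset(-1,1)$, the reflected function $y\mapsto u(-y)$ solves it on $-I$. Next, I would extend $u_1(\cdot,\lambda)$ from $[0,1)$ to all of $(-1,1)$ by solving the ODE with initial data $u_1(0,\lambda)=0$ and $u_1'(0,\lambda)$ at $y=0$; because \eqref{eq:specODEhom} has smooth coefficients and non-vanishing leading coefficient on $(-1,1)$, standard ODE theory yields a solution $f_\lambda\in C^\infty(-1,1)$ which agrees with $u_1(\cdot,\lambda)$ on $[0,1)$ by uniqueness.

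To see $f_\lambda$ is odd, set $g(y):=-f_\lambda(-y)$. By the symmetry above, $g$ solves \eqref{eq:specODEhom}, and using the hypothesis $u_1(0,\lambda)=0$ we compute $g(0)=0=f_\lambda(0)$ and $g'(0)=f_\lambda'(0)$. Uniqueness forces $g=f_\lambda$, i.e., $f_\lambda$ is odd. Smoothness up to the endpoints now follows: by Lemma~\ref{lem:v1smooth} and the formula $u_1(y,\lambda)=(1+y)^{-\lambda}v_1(y,\lambda)/\psi_1(y,\lambda)$ (as observed in the proof of Lemma~\ref{lem:regef}), $f_\lambda\in C^\infty([0,1])$, and oddness then gives $f_\lambda\in C^\infty([-1,1])$. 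Nontriviality is immediate from the asymptotics $u_1(y,\lambda)=(1+y)^{-\lambda}[1+\mc O((1-y)^{1/2}\langle\lambda\rangle^{-1})]$ in Proposition~\ref{prop:v1}, which forces $u_1(\cdot,\lambda)\ne 0$ near $y=1$.

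Thus $f_\lambda$ witnesses $\lambda\in\Sigma_V$ per Definition~\ref{def:SigmaV}, and Lemma~\ref{lem:SigmaV} yields $\lambda\in\sigma_p(\mb L_V)$. The only subtle point is the smoothness of the odd extension through $y=0$, which is not a vanishing-of-derivatives computation but rather a clean consequence of the evenness of $V$ together with uniqueness for the initial value problem \eqref{eq:specODEhom}; everything else is bookkeeping.
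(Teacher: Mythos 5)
Your proposal is correct and reaches the conclusion by the same overall strategy as the paper: reflect $u_1(\cdot,\lambda)$ oddly across $y=0$ to produce a witness for $\lambda\in\Sigma_V$, then invoke Lemma~\ref{lem:SigmaV}. Where you genuinely diverge is in how you verify that the odd reflection is smooth through $y=0$. The paper computes inductively from the ODE that $\partial_y^{2k}u_1(y,\lambda)|_{y=0}=0$ for all $k\in\N_0$ (using $u_1(0,\lambda)=0$), so that the odd reflection has matching derivatives of all orders at $y=0$. You instead first continue $u_1(\cdot,\lambda)$ as a smooth ODE solution to all of $(-1,1)$ and then identify this continuation with its own odd reflection by uniqueness of the initial value problem at $y=0$, exploiting the $y\mapsto -y$ invariance of the equation. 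Your route avoids the inductive derivative computation and makes the role of the evenness of $V$ more transparent---a modest but real simplification. One small imprecision: you assert that ``the coefficients of the ODE \eqref{eq:specODEhom} are even in $y$,'' but the coefficient $2(\lambda+1)y$ of $u'$ is odd. What your argument actually needs is that the differential expression is invariant under $y\mapsto -y$, which holds because the odd coefficient multiplies an odd-order derivative so the two sign changes cancel; with that phrasing the argument is sound.
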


\begin{proof}
  The function $u_1(\cdot,\lambda)$ satisfies Eq.~\eqref{eq:specODEhom}
  for all $y \in [0,1)$ and by evaluation at $y=0$, we find
  inductively that
  $\partial_y^{2k}u_1(y,\lambda)|_{y=0}=0$ for all $k\in \N_0$ (here
  the assumption $u_1(0,\lambda)=0$ enters).
  We extend $u_1(\cdot,\lambda)$ to $[-1,1]$ by setting $u_1(-y,\lambda):=-u_1(y,\lambda)$ for $y\in
  [0,1]$. Then $u_1(\cdot,\lambda)\in C^\infty([-1,1])$,
  $u_1(\cdot,\lambda)$ is odd
  and satisfies Eq.~\eqref{eq:specODEhom} for all
  $y\in (-1,1)$. This means that $\lambda\in \Sigma_V$ and Lemma
  \ref{lem:SigmaV} finishes the proof.
\end{proof}

  \subsection{Construction of the Green function}

    In order to construct the Green function, we need a more explicit expression for the Wronskian of $u_0$
    and $u_1$.
    Note carefully that
    this is the place where the spectral assumption enters.

    \begin{lemma}
      \label{lem:W}
      We have
      \[  W(u_0(\cdot,\lambda),u_1(\cdot,\lambda))(y)=
        2\lambda u_1(0,\lambda)(1-y^2)^{-1-\lambda} \]
      for all $\lambda\in \C$ with $|\Re\lambda|\leq \frac14$.
      Furthermore, if $\mb L_V$ has no eigenvalues on the imaginary
      axis, there exists an $\epsilon_0>0$ such that
      $|u_1(0,\lambda)|\gtrsim 1$ for all
      $\lambda\in \C$ with $\Re\lambda\in [0,\epsilon_0]$. 
    \end{lemma}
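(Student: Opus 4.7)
\medskip

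\noindent\textbf{Proof plan.} The Wronskian formula is a computation: both $u_0(\cdot,\lambda)$ and $u_1(\cdot,\lambda)$ satisfy the homogeneous equation \eqref{eq:specODEhom}, which in standard form has first-order coefficient $-\frac{2(\lambda+1)y}{1-y^2}$. Abel's identity then gives
\[
W(u_0(\cdot,\lambda),u_1(\cdot,\lambda))(y)=C(\lambda)\,(1-y^2)^{-(1+\lambda)},
\]
so everything reduces to determining $C(\lambda)$ by evaluating the Wronskian at one point. The natural choice is $y=0$, because the definition gives $u_0(0,\lambda)=v_1(0,-\lambda)v_1(0,\lambda)-v_1(0,\lambda)v_1(0,-\lambda)=0$. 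Then $W(u_0,u_1)(0)=-u_0'(0,\lambda)\,u_1(0,\lambda)$, and differentiating the product $(1-y^2)^{-\frac12(1+\lambda)}[\cdots]$ at $y=0$ (where the bracket vanishes and the prefactor has vanishing derivative) yields $u_0'(0,\lambda)=v_1(0,-\lambda)v_1'(0,\lambda)-v_1(0,\lambda)v_1'(0,-\lambda)=-W(v_1(\cdot,\lambda),v_1(\cdot,-\lambda))(0)=-2\lambda$ by Remark \ref{rem:Wv1}. Therefore $C(\lambda)=2\lambda\,u_1(0,\lambda)$, which is the asserted formula.

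\medskip

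\noindent For the lower bound on $u_1(0,\lambda)$, I would split the strip $\Re\lambda\in[0,\epsilon_0]$ into a large-$|\Im\lambda|$ regime and a compact region. In the first regime, Proposition \ref{prop:v1} gives $u_1(0,\lambda)=v_1(0,\lambda)=\psi_1(0,\lambda)[1+a_1(0,\lambda)]=1+a_1(0,\lambda)$ with $|a_1(0,\lambda)|\lesssim \langle\Im\lambda\rangle^{-1}$ uniformly for $\Re\lambda\in[-\tfrac14,\tfrac14]$, so $|u_1(0,\lambda)|\geq \tfrac12$ whenever $|\Im\lambda|\geq\Omega_0$ for some large $\Omega_0$.

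\medskip

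\noindent For the complementary compact set, the spectral hypothesis does the work. Lemma \ref{lem:specpLV} says that $u_1(0,i\omega)=0$ for some $\omega\in\R$ would force $i\omega\in\sigma_p(\mb L_V)$, which is ruled out. The function $\lambda\mapsto u_1(0,\lambda)=v_1(0,\lambda)$ is continuous (in fact smooth) on the strip $|\Re\lambda|\leq\tfrac14$ by the $\partial_\kappa,\partial_\omega$ symbol bounds of Proposition \ref{prop:v1}, so compactness of $\{i\omega:|\omega|\leq\Omega_0\}$ yields a uniform lower bound $|u_1(0,i\omega)|\geq c>0$ there. A uniform-continuity argument, using the $\partial_\kappa$ bound from Proposition \ref{prop:v1} to control $|u_1(0,\kappa+i\omega)-u_1(0,i\omega)|$ on this compact interval, then gives $|u_1(0,\lambda)|\geq c/2$ on a small strip $\Re\lambda\in[0,\epsilon_0]$, $|\Im\lambda|\leq\Omega_0$. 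Combining with the large-frequency bound finishes the proof.

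\medskip

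\noindent The main obstacle is the second part: one needs to know that $u_1(0,\lambda)$ depends continuously on $\lambda$ with uniform bounds on its derivatives across the whole strip, which is exactly what the symbol-type estimates of Proposition \ref{prop:v1} were designed to supply. The spectral assumption enters only qualitatively, through Lemma \ref{lem:specpLV}, to rule out zeros on the imaginary axis.
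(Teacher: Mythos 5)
Your proof is correct. For the Wronskian formula you take a slightly different route than the paper: you invoke Abel's identity to conclude $W(u_0,u_1)(y)=C(\lambda)(1-y^2)^{-(1+\lambda)}$ and then determine $C(\lambda)$ by evaluating at $y=0$, using $u_0(0,\lambda)=0$ and $u_0'(0,\lambda)=-2\lambda$ (via Remark~\ref{rem:Wv1}). The paper instead factors the common prefactor out of the Wronskian directly — using $W(\phi f,\phi g)=\phi^2 W(f,g)$ with $\phi=(1-y^2)^{-\frac12(1+\lambda)}$ — and then applies bilinearity of $W$ together with $W(v_1(\cdot,\lambda),v_1(\cdot,-\lambda))=2\lambda$. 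Both reduce to the same identity from Remark~\ref{rem:Wv1}; Abel's identity is a natural alternative and arrives at the same constant, while the paper's computation is marginally shorter because it never needs to evaluate $u_0'(0,\lambda)$. For the lower bound on $|u_1(0,\lambda)|$ you first rule out zeros on $i\R$ (via Lemma~\ref{lem:specpLV}), get a bound on a compact interval by continuity, and then extend to a thin strip by uniform continuity in $\kappa$; the paper instead immediately rules out zeros on the whole closed strip $\Re\lambda\in[0,\epsilon_0]$ using Lemma~\ref{lem:LVcompact} to guarantee an eigenvalue-free strip, so only the compactness step on that strip is needed. Your detour through the imaginary axis is a small extra step but is logically sound; the large-$|\Im\lambda|$ asymptotics are handled identically in both arguments.
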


    \begin{proof}
      By definition and Remark \ref{rem:Wv1}, we have
     \begin{align*}
        W(u_0(\cdot,\lambda),u_1(\cdot,\lambda))(y)
        &=(1-y^2)^{-1-\lambda}W(
          v_1(0,-\lambda)v_1(\cdot,\lambda)-v_1(0,\lambda)
          v_1(\cdot,-\lambda), v_1(\cdot,\lambda)) \\
        &=-v_1(0,\lambda)W(v_1(\cdot,-\lambda),v_1(\cdot,\lambda))(1-y^2)^{-1-\lambda}
       \\
        &=2\lambda v_1(0,\lambda)(1-y^2)^{-1-\lambda} \\
       &=2\lambda u_1(0,\lambda)(1-y^2)^{-1-\lambda}
     \end{align*}
        for all $\lambda\in \C$ with
        $|\Re\lambda|\leq \frac14$.

        By assumption and Lemma \ref{lem:LVcompact}, there exists an
        $\epsilon_0>0$ such that there are no eigenvalues of $\mb L_V$
        in the strip $\{z\in \C: \Re z\in
        [0,\epsilon_0]\}$. Consequently, by Lemma \ref{lem:specpLV},
        $u_1(0,\lambda)\not= 0$ for all $\lambda\in \C$ with
        $\Re\lambda\in [0,\epsilon_0]$ and, since
        $u_1(0,\lambda)=1+O(\langle\lambda\rangle^{-1})$ by Proposition
        \ref{prop:v1}, the claim follows.
      \end{proof}

      \begin{definition}
       For any $\lambda\in \C$ with $\Re\lambda\in (0,\frac14]$ and
       $\lambda\not\in\sigma_p(\mb L_V)$, we set
    \[ G_V(y,x,\lambda):=\frac{-1}{(1-x^2)W(u_0(\cdot,\lambda), u_1(\cdot,\lambda))(x)}
      \begin{cases}
        u_0(y,\lambda)u_1(x,\lambda) & 0\leq y\leq x<1 \\
        u_1(y,\lambda)u_0(x,\lambda) & 0\leq x\leq y<1
      \end{cases}.
    \]
\end{definition}

  \begin{lemma}
    \label{lem:Green}
    There exists an $\epsilon_0>0$ such that any $\lambda\in \C$ with
    $\Re\lambda\in (0,\epsilon_0]$ belongs to $\rho(\mb L_V)$ and for any
   $\mb f=(f_1,f_2)\in \widetilde{\mc H}$ 
 we have 
 \[ \mb R_{\mb L_V}(\lambda)\mb f(y)=\begin{pmatrix}
     \int_0^1
      G_V(y,x,\lambda)
      [2xf_1'(x)+(\lambda+1)f_1(x)+f_2(x)]dx \\
    \lambda \int_0^1
      G_V(y,x,\lambda)
      [2xf_1'(x)+(\lambda+1)f_1(x)+f_2(x)]dx-f_1(y)
\end{pmatrix}
    \]
    for $y\in [0,1)$.
  \end{lemma}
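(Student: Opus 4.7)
The plan is to construct a candidate for $\mb R_{\mb L_V}(\lambda)\mb f$ directly via the Green function formula, verify that it satisfies the ODE classically, and then check membership in $\mc D(\mb L_V)$. First, we fix $\epsilon_0 > 0$ small enough that two conditions hold on the strip $\{\lambda \in \C : 0 < \Re\lambda \le \epsilon_0\}$: (i) $|u_1(0,\lambda)| \gtrsim 1$, furnished by Lemma \ref{lem:W}; and (ii) there are no eigenvalues of $\mb L_V$ in this strip, which is achievable because Lemma \ref{lem:LVcompact} shows that the open right half-plane contains only finitely many eigenvalues. Since the same lemma asserts that $\sigma(\mb L_V) \cap \{\Re z > 0\}$ consists purely of eigenvalues, (ii) already yields $\lambda \in \rho(\mb L_V)$, while (i) makes the Green function $G_V(\cdot,\cdot,\lambda)$ well-defined.

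Given $\mb f = (f_1, f_2) \in \widetilde{\mc H}$, let $\mb U = (U_1, U_2)$ denote the candidate defined by the formula in the statement (interpreted on $[0,1)$ and extended oddly to $(-1,1)$). A routine variation-of-parameters calculation---using that $\{u_0(\cdot,\lambda), u_1(\cdot,\lambda)\}$ is a fundamental system for \eqref{eq:specODEhom} and the explicit Wronskian from Lemma \ref{lem:W}---shows that $U_1$ solves \eqref{eq:specODE} classically on $(0,1)$, equivalently that $(\lambda - \mathfrak L_0 - \mb L_V')\mb U = \mb f$ pointwise there. The vanishing $u_0(0,\lambda) = 0$, immediate from the definition of $u_0$ as an antisymmetric combination of $v_1(\cdot,\pm\lambda)$, forces $U_1(0) = 0$; evenness of $V$ and oddness of $\mb f$ then ensure that the odd extension is smooth across $y = 0$ (differentiating \eqref{eq:specODEhom} recursively at the origin kills all even-order derivatives of $U_1$) and satisfies the equation throughout $(-1,1)$.

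The main obstacle is to show $\mb U \in \mc D(\mb L_V) = \mc D(\mb L_0)$. Using Proposition \ref{prop:v1} and Lemma \ref{lem:v1smooth} for asymptotic control near $y = 1$---explicitly, $u_1(\cdot,\lambda) \in C^\infty([0,1])$ while $u_0(y,\lambda) = \mc O((1-y)^{-\Re\lambda})$ with analogous derivative bounds---together with the boundedness of $F_\lambda$ on $[-1,1]$, one checks that $\mb U \in \mc H$; from the classical identity $\mathfrak L_0 \mb U = \lambda\mb U - \mb L_V'\mb U - \mb f$ and the boundedness of $\mb L_V'$, also $\mathfrak L_0 \mb U \in \mc H$. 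To place $\mb U$ in the graph closure of $\widetilde{\mb L}_0$, we approximate by elements of $\widetilde{\mc H}$ converging to $\mb U$ in the graph norm; the explicit asymptotics of $\mb U$ near $y = \pm 1$ guide the construction, e.g.\ via multiplicative cutoffs adapted to the weight $(1-y^2)$ so that commutator contributions vanish in $\mc H$ by dominated convergence. Once this is established, $\mb U \in \mc D(\mb L_0)$ with $\mb L_0 \mb U = \mathfrak L_0 \mb U$, hence $(\lambda - \mb L_V)\mb U = \mb f$; since $\lambda \in \rho(\mb L_V)$, we conclude $\mb U = \mb R_{\mb L_V}(\lambda)\mb f$, which is the asserted formula.
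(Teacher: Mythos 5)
The crucial step you gloss over is exactly where the paper does its real work. Setting $w_\lambda(y):=\int_0^1 G_V(y,x,\lambda)F_\lambda(x)dx$, the paper proves that $w_\lambda\in C^\infty([0,1])$ --- smooth \emph{up to the boundary} $y=1$ --- by splitting the integral into four pieces $I_1,\dots,I_4$, recombining $I_1+I_3$ into $u_1(y,\lambda)$ times a constant, and substituting $x=y+t(1-y)$ in $I_2$ so that the factor $(1-y)^{-\lambda}$ from $u_0$ cancels against $(1-x)^\lambda\,dx$. Together with the parity argument that kills all even Taylor coefficients at $y=0$, this yields $w_\lambda\in C^\infty([-1,1])$, so $\mb u=(w_\lambda,\lambda w_\lambda - f_1)$ lies \emph{directly} in $\widetilde{\mc H}=\mc D(\widetilde{\mb L}_0)\subset\mc D(\mb L_0)$ and the identity $(\lambda-\mb L_V)\mb u=\mb f$ is immediate. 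No approximation is needed.

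Your proposal instead records only the crude pointwise asymptotic $u_0(y,\lambda)=\mc O((1-y)^{-\Re\lambda})$, asserts $\mb U\in\mc H$ ``one checks'', and then defers to an unspecified cutoff construction to place $\mb U$ in the graph closure $\mc D(\mb L_0)$. Two problems: first, the membership $\mb U\in\mc H$ is not a consequence of those asymptotics alone --- if $w_\lambda$ really inherited the $(1-y)^{-\Re\lambda}$ blowup of $u_0$ it would \emph{not} be finite energy; the boundedness comes precisely from the cancellation against the decaying tail integral, which you never verify. Second, and more seriously, showing $\mb U\in\mc D(\mb L_0)$ is not automatic from $\mb U,\mathfrak L_0\mb U\in\mc H$, since $\mb L_0$ is defined as the graph closure of the operator on $\widetilde{\mc H}$ rather than a maximal operator; the cutoff argument you gesture at would have to suppress commutator terms like $(1-y^2)\chi_n''U_1$ that are not obviously small unless you already know boundary regularity of $U_1$. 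But once you know that regularity, the cutoffs are superfluous. In short, you have replaced the one nontrivial observation of the proof --- $w_\lambda\in C^\infty([-1,1])$ --- with a placeholder, and without it the argument does not close.
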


  \begin{proof}
    By Lemma \ref{lem:LVcompact}, it follows that there exists an
    $\epsilon_0>0$ such that $\Re\lambda\in (0,\epsilon_0]$ implies
    $\lambda\in \rho(\mb L_V)$.
    Recall from the proof of Lemma \ref{lem:regef} that
    $u_1(\cdot,\lambda)\in C^\infty([0,1])$. Furthermore, 
    \begin{align*}
      (1-y^2)^{-\frac12(1+\lambda)}v_1(y,-\lambda)
      =(1-y^2)^{-\frac12(1+\lambda)}\psi_1(y,-\lambda)
      \frac{v_1(y,-\lambda)}{\psi_1(y,-\lambda)}=(1-y)^{-\lambda}\frac{v_1(y,-\lambda)}{\psi_1(y,-\lambda)}
    \end{align*}
    and by Lemma \ref{lem:v1smooth} we see that
    \[ u_0(y,\lambda)=
      v_1(0,-\lambda)u_1(y,\lambda)+(1-y)^{-\lambda}h(y,\lambda), \]
    where $h(\cdot,\lambda)\in C^\infty([0,1])$.
    For brevity we set $F_\lambda(x):=2xf_1'(x)+(\lambda+1)f_1(x)+f_2(x)$.
    Then, by Lemma \ref{lem:W}, we have
    \[ \int_0^1 G_V(y,x,\lambda)F_\lambda (x)dx=-\frac{1}{2\lambda u_1(0,\lambda)}\sum_{k=1}^4 I_k(y) \]
    with
    \begin{align*}
      I_1(y)
      &:=v_1(0,-\lambda)u_1(y,\lambda)\int_y^1
        (1-x^2)^\lambda u_1(x,\lambda)F_\lambda(x)dx \\
      I_2(y)
      &:=h(y,\lambda)
       (1-y)^{-\lambda}\int_y^1 (1-x^2)^\lambda
        u_1(x,\lambda)F_\lambda(x)dx \\
      I_3(y)
      &:=v_1(0,-\lambda)u_1(y,\lambda)\int_0^y
        (1-x^2)^\lambda u_1(x,\lambda)F_\lambda(x)dx \\
      I_4(y)
      &:=u_1(y,\lambda)\int_0^y (1+x)^\lambda
        h(x,\lambda)F_\lambda(x)dx. 
    \end{align*}
   By assumption, $F_\lambda\in C^\infty([0,1])$ and therefore, $I_4\in
   C^\infty([0,1])$.
   Furthermore,
   \[ I_1(y)+I_3(y)=v_1(0,-\lambda)
     u_1(y,\lambda)\int_0^1
    (1-x^2)^\lambda u_1(x,\lambda)F_\lambda(x)dx \]
  and thus, $I_1+I_3\in C^\infty([0,1])$.
  Finally, the change of variables $x=y+t(1-y)$ yields
\[ I_2(y)=h(y,\lambda)(1-y)\int_0^1 (1-t)^\lambda[1+y+t(1-y)]^\lambda
  u_1(y+t(1-y),\lambda)
  F_\lambda(y+t(1-y))dt \]
and thus, $I_2\in C^\infty([0,1])$.
In summary, the function $w_\lambda(y):=\int_0^1 G_V(y,x,\lambda)F_\lambda(x)dx$
belongs to $C^\infty([0,1])$ and by construction, $w_\lambda$
satisfies Eq.~\eqref{eq:specODE} for $y\in [0,1]$.
Furthermore, $w_\lambda(0)=0$ and by the oddness of $F_\lambda$, we
find inductively from Eq.~\eqref{eq:specODE} that
$w_\lambda^{(2k)}(0)=0$ for all $k\in \N_0$. This means that
$w_\lambda$ extends to an odd function in $C^\infty([-1,1])$ and $w_\lambda$
satisfies Eq.~\eqref{eq:specODE} for all $y\in [-1,1]$.
As a consequence, $\mb u:=(w_\lambda,\lambda w_\lambda-f_1)$
belongs to $\widetilde{\mc H}$ and satisfies $(\lambda-\mb L_V)\mb
u=\mb f$, which means that $\mb u=\mb R_{\mb L_V}(\lambda)\mb f$.
\end{proof}

\subsection{Time evolution on the unstable subspace}

By now we have collected enough information so that we can prove the
first part of Theorem \ref{thm:main}, which is a consequence of the
following result combined with Lemmas \ref{lem:SVeps} and \ref{lem:SigmaV}.

\begin{lemma}
  \label{lem:evolunstable}
  We have $\rg \mb P_V\subset C^\infty(-1,1)\times C^\infty(-1,1)$ and
  for every $\lambda\in \sigma(\mb L_V)\cap \{z\in \C: \Re
  z>0\}=:\sigma_u(\mb L_V)$
  there exists a number $n(\lambda)\in \N_0$
such that
\[ \mb S_V(s)\mb f=\sum_{\lambda\in \sigma_u(\mb L_V)}e^{\lambda s}
  \sum_{k=0}^{n(\lambda)}\frac{s^k}{k!} (\mb L_V-\lambda)^k \mb f
\]
for all $\mb f\in \rg \mb P_V$ and all $s\geq 0$.
\end{lemma}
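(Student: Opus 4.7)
The plan is to exploit the finite-dimensionality established in Lemma \ref{lem:LVcompact} and reduce everything to linear algebra on the invariant subspace $\rg\mb P_V$. First I would observe that $\mb P_V$ commutes with both $\mb L_V$ and the semigroup $\mb S_V$: this is standard since $\mb P_V$ is defined by a contour integral of $\mb R_{\mb L_V}$, and resolvents commute with their operator. In particular, $\rg \mb P_V$ is invariant under $\mb L_V$ and under each $\mb S_V(s)$, and by Lemma \ref{lem:LVcompact} it is finite-dimensional. Hence the restriction $\mb L_V^u:=\mb L_V|_{\rg \mb P_V}$ is a bounded linear operator on a finite-dimensional vector space, and a standard application of the functional calculus for Riesz projections identifies its spectrum with $\sigma_u(\mb L_V)$.

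Second, I would argue that $\mb S_V(s)|_{\rg \mb P_V} = e^{s\mb L_V^u}$ in the sense of the matrix exponential. This is immediate because on a finite-dimensional invariant subspace of the generator, the $C_0$-semigroup is necessarily given by the exponential series of the (now bounded) restriction. Applying the Jordan decomposition to $\mb L_V^u$, I would write $\rg \mb P_V = \bigoplus_{\lambda \in \sigma_u(\mb L_V)} \ker(\mb L_V^u - \lambda)^{n(\lambda)+1}$, where $n(\lambda)$ is one less than the size of the largest Jordan block at $\lambda$. On each generalized eigenspace, $(\mb L_V - \lambda)$ is nilpotent of index $\le n(\lambda)+1$, so
\[ \mb S_V(s)\mb f = e^{\lambda s}e^{s(\mb L_V-\lambda)}\mb f = e^{\lambda s}\sum_{k=0}^{n(\lambda)}\frac{s^k}{k!}(\mb L_V-\lambda)^k\mb f, \]
and summing the contributions of the (mutually orthogonal up to projection) generalized eigenspaces yields the stated formula.

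For the regularity claim, I would use that $\mb L_V$ preserves $\rg \mb P_V$: any $\mb f\in \rg\mb P_V$ satisfies $\mb L_V\mb f\in \rg\mb P_V\subset \mc D(\mb L_V)$, so by induction $\mb f\in \mc D(\mb L_V^n)$ for every $n\in \N$. Corollary \ref{cor:regL0} (and the identification in Remark \ref{rem:identify}) then yields $\mb f\in C^n(-1,1)\times C^{n-1}(-1,1)$ for every $n$, hence $\rg \mb P_V\subset C^\infty(-1,1)\times C^\infty(-1,1)$.

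I don't expect a serious obstacle; the proof is essentially a bookkeeping exercise in semigroup theory and Jordan decomposition once Lemmas \ref{lem:LVcompact} and \ref{cor:regL0} are in hand. The only mild care is in verifying that the matrix exponential on the invariant subspace really coincides with the abstract $C_0$-semigroup restricted there; this follows from uniqueness of solutions to the abstract Cauchy problem, since both candidates satisfy $\partial_s \Phi = \mb L_V \Phi$ with $\Phi(0)=\mb f$ in the finite-dimensional setting.
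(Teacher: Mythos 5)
Your proposal is correct and essentially reproduces the paper's argument: both rely on the finite-dimensionality of $\rg \mb P_V$ from Lemma \ref{lem:LVcompact}, decompose it into generalized eigenspaces (the paper via the spectral projections $\mb P_{V,\lambda}$, you via Jordan blocks --- the same thing), exploit nilpotency of $\mb L_V-\lambda$ on each piece, and establish smoothness by showing $\rg\mb P_V\subset\mc D(\mb L_V^n)$ for all $n$ and then invoking Corollary \ref{cor:regL0}. The only cosmetic difference is how the polynomial form is extracted: the paper differentiates the rescaled semigroup $\widetilde{\mb S}_{V,\lambda}(s)=e^{-\lambda s}\mb S_{V,\lambda}(s)$ to degree $n(\lambda)$ and integrates, whereas you identify the restricted semigroup directly with the matrix exponential $e^{s\mb L_V^u}$; these are equivalent.
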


\begin{proof}
  By Lemma \ref{lem:LVcompact}, $\sigma_u(\mb L_V)$ is finite and
  consists of eigenvalues with finite algebraic multiplicities. For
  each $\lambda\in \sigma_u(\mb L_V)$, let $\mb P_{V,\lambda}$ be the
  corresponding spectral projection. Then
  \[ \rg \mb P_V=\bigoplus_{\lambda\in \sigma_u(\mb L_V)}\rg \mb
    P_{V,\lambda}. \]
  Denote by $\mb L_{V,\lambda}$ the part of $\mb L_V$ in the
  finite-dimensional subspace $\rg\mb P_{V,\lambda}$.
Clearly, $\rg \mb P_{V,\lambda}\subset \mc D(\mb L_V)$ and for any
$\mb f\in \rg\mb P_{V,\lambda}$, we have $\mb L_{V,\lambda}\mb f=\mb
L_V\mb f\in
\rg \mb P_{V,\lambda}\subset \mc D(\mb L_V)$. Inductively, this implies $\rg \mb
P_{V,\lambda}\subset \mc D(\mb L_V^n)$ for any $n\in \N$ and Corollary
\ref{cor:regL0} shows that $\rg \mb P_{V,\lambda}\subset
C^\infty(-1,1)\times C^\infty(-1,1)$.

Let $\mb S_{V,\lambda}(s)$ be the part of $\mb S_V(s)$ in $\rg\mb
P_{V,\lambda}$ and set $\widetilde{\mb S}_{V,\lambda}(s):=e^{-\lambda
  s}\mb S_{V,\lambda}(s)$. Then $\widetilde{\mb S}_{V,\lambda}(s)$ is a
semigroup on $\rg\mb P_{V,\lambda}$ with generator $\mb
L_{V,\lambda}-\lambda$.
Since $\sigma(\mb L_{V,\lambda}-\lambda)=\{0\}$ and $\dim \rg\mb
  P_{V,\lambda}<\infty$, it follows that $\mb L_{V,\lambda}-\lambda$
  is nilpotent and there exists an $n(\lambda)\in \N$ such that
  $(\mb L_{V,\lambda}-\lambda)^{n(\lambda)}=\mb 0$.
  Note that $\partial_s^n \widetilde{\mb S}_{V,\lambda}(s)\mb
  f=\widetilde{\mb S}_{V,\lambda}(s)(\mb L_{V,\lambda}-\lambda)^n \mb
  f$ for all $n\in \N_0$ and $\mb f\in \rg\mb P_{V,\lambda}$. Consequently, 
  \[ \partial_s^{n(\lambda)}\widetilde{\mb S}_{V,\lambda}(s)\mb f=
    \widetilde{\mb S}_{V,\lambda}(s)(\mb
    L_{V,\lambda}-\lambda)^{n(\lambda)}\mb f=0 \]
  and integrating this equation yields
\[ \widetilde{\mb S}_{V,\lambda}(s)\mb f=\sum_{k=0}^{n(\lambda)}\frac{s^k}{k!}(\mb
  L_{V,\lambda}-\lambda)^k\mb f=
  \sum_{k=0}^{n(\lambda)}\frac{s^k}{k!}(\mb
  L_{V}-\lambda)^k\mb f. \]
Summation over all $\lambda\in \sigma_u(\mb L_V)$ finishes the proof.
\end{proof}

\section{Strichartz estimates}
  
\noindent In order to separate the free evolution from the effect of the potential,
we introduce suitable operators that account for the difference.
\begin{definition}
  For any $\epsilon>0$ and $f\in C([0,1])$, we set
  \begin{align*} [T_{\epsilon}(s)f](y)&:=\frac{1}{2\pi
      i}\lim_{N\to\infty}\int_{\epsilon-iN}^{\epsilon+iN}e^{\lambda s}
   \int_0^1 \left [G_V(y,x,\lambda)-G_0(y,x,\lambda)\right] f(x)dx
    d\lambda \\
    [\dot T_{\epsilon}(s)f](y)&:=\frac{1}{2\pi
      i}\lim_{N\to\infty}\int_{\epsilon-iN}^{\epsilon+iN}\lambda e^{\lambda s}
   \int_0^1 \left [G_V(y,x,\lambda)-G_0(y,x,\lambda)\right] f(x)dx
    d\lambda
    \end{align*}
\end{definition}

The key result for the Strichartz estimates are the following bounds
on $T_\epsilon$ and $\dot T_\epsilon$.
\begin{theorem}
  \label{thm:T}
  Let $p\in [2,\infty]$ and $q\in [1,\infty)$. Then there exists an
  $\epsilon_0>0$ such that
  \begin{align*} \|e^{-\epsilon s}T_\epsilon(s) f\|_{L^p_s(0,\infty)L^q(0,1)}&\lesssim
    \left \|(1-|\cdot|^2)^\frac12 f \right \|_{L^2(0,1)} \\
    \|e^{-\epsilon s}\dot T_\epsilon(s) f\|_{L^p_s(0,\infty)L^q(0,1)}&\lesssim
    \left \|(1-|\cdot|^2)^\frac12 f' \right \|_{L^2(0,1)}+\|f\|_{L^2(0,1)}
    \end{align*}
  for all $\epsilon\in (0,\epsilon_0]$ and $f\in C^1([0,1])$.
\end{theorem}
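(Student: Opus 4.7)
The plan is to reduce the bounds on $T_\epsilon(s)$ and $\dot T_\epsilon(s)$ to oscillatory integral estimates along the imaginary axis, by combining a contour shift with the symbol-type bounds of Proposition \ref{prop:v1} and the lower bound $|u_1(0,\lambda)|\gtrsim 1$ of Lemma \ref{lem:W}. First I would use Lemma \ref{lem:Green} to write $G_V$ and $G_0$ explicitly, insert $v_1=\psi_1(1+a_1)$, and expand the product so that $G_V(y,x,\lambda)-G_0(y,x,\lambda)$ decomposes into a finite sum of terms of the schematic form $\frac{(1\pm x)^{\pm\lambda}(1\pm y)^{\pm\lambda}}{\lambda}\,b(y,x,\lambda)$, with each $b$ of symbol type and satisfying $|b(y,x,\lambda)|\lesssim\langle\omega\rangle^{-1}$ uniformly for $\Re\lambda\in[0,\epsilon_0]$, where $\omega=\Im\lambda$. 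The extra $\langle\omega\rangle^{-1}$ gain, stemming from the factor $a_1$, is the heart of the estimate and is precisely what is missing in $G_0$ alone.

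Second, since $\lambda\mapsto G_V(y,x,\lambda)-G_0(y,x,\lambda)$ is holomorphic in a strip around $i\R$ (by the spectral assumption together with Lemma \ref{lem:W}) and decays as $|\omega|\to\infty$, I would shift the contour in $e^{-\epsilon s}T_\epsilon(s)$ to $\Re\lambda=0$, so that $e^{-\epsilon s}T_\epsilon(s)f(y)$ becomes an inverse Fourier transform in $\omega$. The natural change of variables is $\eta=-\log(1-y)$, $\xi=-\log(1-x)$ for the terms with $(1-y)^{-\lambda}(1-x)^\lambda$, and analogously $-\log(1+y)$, $-\log(1+x)$ for the remaining sign combinations. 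In these coordinates the oscillatory factors become $e^{i\omega(\xi-\eta)}$, so each contribution to $T_\epsilon(s)f(y)$ acquires the form of a convolution in $s+\xi-\eta$ of the symbol $b$ with a weighted version of $f$.

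Third, I would handle the endpoints $p=2$ and $p=\infty$ separately and interpolate. For $p=2$, Plancherel in $s$ converts the $L^2_s L^q_y$ bound into an $L^2_\omega L^q_y$ estimate, which after Minkowski's inequality and unwinding the change of variables reduces to the weighted $L^2$ norm of $f$ on the right-hand side. For $p=\infty$, I would exploit the stronger decay $\langle\omega\rangle^{-1-\ell}$ produced by repeated $\omega$-differentiation of the symbol (equivalently, integration by parts in $\omega$) to obtain an $L^1_\omega$ bound on the integrand; the $L^q(0,1)$ norm in $y$ is then recovered via Lemma \ref{lem:LpH1}. Riesz--Thorin interpolation fills in $p\in(2,\infty)$. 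For $\dot T_\epsilon(s)$, the extra factor of $\lambda$ would cancel the $\langle\omega\rangle^{-1}$ gain, so to compensate I would integrate by parts in $x$, transferring one derivative onto $f$ and producing a boundary term at $x=0$ which is controlled by $\|f\|_{L^2(0,1)}$ via a standard trace estimate based on the fundamental theorem of calculus.

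The main obstacle, and the most delicate part of the argument, will be the $p=2$ endpoint. One must carefully track the interaction between the two regions $0\le y\le x<1$ and $0\le x\le y<1$ in the Green function, as well as the four sign combinations in $(1\pm y)^{\pm\lambda}(1\pm x)^{\pm\lambda}$, to verify that each resulting oscillatory integral is either a genuine convolution (amenable to Young's inequality) or admits a kernel bound compatible with Schur's test under Minkowski. A secondary difficulty is the low-frequency regime $|\omega|\lesssim 1$, where the prefactor $\lambda^{-1}$ in $G_V-G_0$ appears singular; this singularity is removable because the corresponding numerator vanishes to first order at $\lambda=0$, along the lines of the small-$|\lambda|$ analysis carried out at the end of the proof of Proposition \ref{prop:v1}.
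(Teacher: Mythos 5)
Your proposal hits most of the structural ideas of the paper's proof: decomposing $G_V-G_0$ into symbol-type pieces of the form $\lambda^{-1}(1\pm y)^{\pm\lambda}(1\pm x)^{\pm\lambda}\mathcal O(\langle\omega\rangle^{-1})$ (this is Lemma \ref{lem:GV-G0}), exploiting the extra $\langle\omega\rangle^{-1}$ gain from $a_1$, passing to the logarithmic variables $\eta=-\log(1\pm y)$, $\xi=-\log(1\pm x)$ to expose a convolution structure in $s$, and, for $\dot T_\epsilon$, splitting off a low-frequency piece where $\lambda^{-1}$ cancels and integrating by parts in $x$ at high frequency with the boundary term at $x=0$ absorbed by $\|f\|_{L^2}$. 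The low-frequency singularity removal via $\psi_0(\cdot,0)=0$ (equivalently $u_0(\cdot,0)=0$) is also exactly what the paper does.

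Where the routes genuinely diverge is in how the endpoints are organized. You propose Plancherel in $s$ for $p=2$ and an $L^1_\omega$ bound for $p=\infty$, with Riesz--Thorin in between; and you propose shifting the contour to $\Re\lambda=0$. The paper instead works directly on the line $\Re\lambda=\epsilon$ (the factors $(1\pm y)^{\mp\epsilon}(1\pm x)^{\pm\epsilon}$ stay bounded thanks to the indicator constraints $y\le x$ resp.\ $x\le y$), and first establishes a single pointwise kernel bound
$|K_\epsilon(s,y,x)|\lesssim e^{\epsilon s}\langle\log(1-y)\rangle^{3}\langle s+\log(1-x)\rangle^{-2}$
(Proposition \ref{prop:K}) via two integrations by parts in $\omega$. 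With this in hand, $p=2$ follows from Young's inequality (the kernel is an $L^1$-convolution in $s-\xi$ after $\xi=-\log(1-x)$), $p=\infty$ follows from Cauchy--Schwarz, and the $\langle\log(1-y)\rangle^{3}$ factor lies in $L^q(0,1)$ for every $q<\infty$, which gives the $L^q_y$ norm for free. This avoids two soft spots in your plan: (i) Plancherel converts $L^2_s L^q_y$ to $L^q_y L^2_\omega$ only after a Minkowski swap, which goes the wrong way when $q>2$ on a finite-measure interval, so your $p=2$ endpoint would need extra work for large $q$; and (ii) your appeal to Lemma \ref{lem:LpH1} to ``recover the $L^q(0,1)$ norm in $y$'' is misplaced --- that lemma is a Sobolev embedding for functions, not a device for controlling the $y$-dependence of an oscillatory integral; the $y$-dependence is controlled directly through the $\langle\log(1-y)\rangle$ powers in the kernel bound. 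The contour shift to $\Re\lambda=0$ is legitimate under the spectral assumption (Lemma \ref{lem:W} guarantees $|u_1(0,\lambda)|\gtrsim 1$ there), but it is unnecessary and the paper does not perform it.
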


We now reduce the proof of Theorem \ref{thm:StrichS} to Theorem
\ref{thm:T}. The rest of this section is then devoted to the proof of
Theorem \ref{thm:T}.

\begin{lemma}
  Assume that Theorem \ref{thm:T} holds. Then Theorem
  \ref{thm:StrichS} follows.
\end{lemma}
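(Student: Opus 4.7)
The plan is to insert the Green-function representation of the resolvent from Lemma \ref{lem:Green} into the Gearhart--Pr\"uss formula for $\mb S_V^{\mathrm{st}}$ (displayed just before \eqref{eq:specODE}), split $G_V=G_0+(G_V-G_0)$, and recognize the two pieces as a free evolution (handled by Proposition \ref{prop:Strich0}) and the operators $T_\epsilon,\dot T_\epsilon$ (handled by Theorem \ref{thm:T}). Fix $\epsilon_0>0$ so that $\mb L_V$ has no spectrum in the closed strip $\{0<\Re\lambda\le\epsilon_0\}$; this is possible by Lemma \ref{lem:LVcompact} and the hypothesis. By density of $\widetilde{\mc H}$ in $\mc H$, and since generalized eigenvectors in $\rg\mb P_V$ extend smoothly to $[-1,1]$ by an iteration of Lemma \ref{lem:regef}, it suffices to prove the estimate for $\mb f\in \widetilde{\mc H}$ with $\mb g:=(\mb I-\mb P_V)\mb f\in \widetilde{\mc H}\cap \ker\mb P_V\subset \mc D(\mb L_V^{\mathrm{st}})$. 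Since $\mb P_V$ commutes with $\mb R_{\mb L_V}(\lambda)$, we have $\mb R_{\mb L_V^{\mathrm{st}}}(\lambda)\mb g=\mb R_{\mb L_V}(\lambda)\mb g$ on $\rho(\mb L_V)$, and Lemma \ref{lem:Green} then yields, for any $\epsilon\in (0,\epsilon_0]$,
\[ [\mb S_V(s)\mb g]_1(y)=\frac{1}{2\pi i}\lim_{N\to\infty}\int_{\epsilon-iN}^{\epsilon+iN}e^{\lambda s}\int_0^1 G_V(y,x,\lambda)\bigl[h(x)+\lambda g_1(x)\bigr]\,dx\,d\lambda \]
with $h(x):=2xg_1'(x)+g_1(x)+g_2(x)$.

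Write $G_V=G_0+(G_V-G_0)$. The same inversion formula applied with $V=0$ (valid on $\{\Re\lambda=\epsilon\}$ since $\sigma(\mb L_0)\subset \{\Re z\le 0\}$ by Corollary \ref{cor:specL0}) identifies the $G_0$-contribution with $[\mb S_0(s)\mb g]_1=u_{g_1,g_2}$, using the $V=0$ analogue of Lemma \ref{lem:Green} together with the final corollary of Section 3. By definition, the $(G_V-G_0)$-contribution equals $[T_\epsilon(s)h](y)+[\dot T_\epsilon(s)g_1](y)$. Proposition \ref{prop:Strich0} yields $\|u_{g_1,g_2}\|_{L^p_sL^q(-1,1)}\lesssim \|\mb g\|_{\mc H}$, while Theorem \ref{thm:T}, Lemma \ref{lem:LpH1}, and the elementary bound $(1-y^2)^{\frac12}|y|\le 1$ together give
\[ \|e^{-\epsilon s}T_\epsilon(s)h\|_{L^p_sL^q(0,1)}+\|e^{-\epsilon s}\dot T_\epsilon(s)g_1\|_{L^p_sL^q(0,1)}\lesssim \|\mb g\|_{\mc H}. \]
Oddness promotes $L^q(0,1)$ to $L^q(-1,1)$.

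The main obstacle is the unwanted factor $e^{-\epsilon s}$ on the left-hand sides of the Theorem \ref{thm:T} bounds, which does not match the target Strichartz norm. I would resolve this by observing that $T_\epsilon(s)h$ and $\dot T_\epsilon(s)g_1$ are in fact independent of $\epsilon\in (0,\epsilon_0]$: on the strip $\{0<\Re\lambda\le\epsilon_0\}$ the kernel $G_V-G_0$ is jointly holomorphic in $\lambda$ (no eigenvalues of $\mb L_V$ there by our choice of $\epsilon_0$, and $G_0$ is holomorphic on $\{\Re\lambda>0\}$ by Corollary \ref{cor:specL0}), and the subtraction supplies enough decay in $|\Im\lambda|$ for Cauchy's theorem to shift the contour without boundary or residue contributions. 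Applying Theorem \ref{thm:T} with parameter $\epsilon'\in (0,\epsilon_0]$ and letting $\epsilon'\searrow 0$ via monotone convergence then upgrades the bound to one with $e^{-\epsilon' s}$ replaced by $1$. Combining the three pieces establishes the Strichartz estimate on $\widetilde{\mc H}$, which extends to all of $\mc H$ by density.
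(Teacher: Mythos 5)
Your overall strategy matches the paper's: invert the Laplace transform for the reduced semigroup, split $G_V=G_0+(G_V-G_0)$, control the free piece with Proposition \ref{prop:Strich0} and the correction with Theorem \ref{thm:T}, then remove the $e^{-\epsilon s}$ factor by an $\epsilon$-independence observation and monotone convergence. The decisive difference, and the gap, is your reduction step: you claim that for $\mb f\in\widetilde{\mc H}$ one has $\mb g:=(\mb I-\mb P_V)\mb f\in\widetilde{\mc H}\cap\ker\mb P_V$, justified by asserting that \emph{generalized} eigenvectors in $\rg\mb P_V$ extend smoothly to $[-1,1]$ ``by an iteration of Lemma \ref{lem:regef}.'' That iteration is not supplied and does not come for free. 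The proof of Lemma \ref{lem:regef} crucially uses $f_2=\lambda f_1$ to reduce to the homogeneous ODE \eqref{eq:specODEhom} and then kills the singular branch $\widetilde u_1$ via the energy; for a generalized eigenvector $(\lambda-\mb L_V)\mb f=\mb h\neq 0$ one gets an \emph{inhomogeneous} ODE, and one would need to construct a particular solution smooth on $[c,1]$ with finite energy before the $b=0$ argument applies. This is not a small footnote: the Wronskian $W(u_0(\cdot,\lambda),u_1(\cdot,\lambda))$ used to build the Green function vanishes precisely at eigenvalues (Lemmas \ref{lem:specpLV} and \ref{lem:W}), so the machinery of Lemma \ref{lem:Green} is not directly available at $\lambda\in\sigma_u(\mb L_V)$. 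Indeed, Lemma \ref{lem:evolunstable} only establishes $\rg\mb P_V\subset C^\infty(-1,1)\times C^\infty(-1,1)$, not $\widetilde{\mc H}$, and the paper's argument is specifically structured so as never to need more. It defines $\Phi_\epsilon$ as (contour integral of $[\mb R_{\mb L_V}\mb f]_1$)$-[\mb S_0(s)\mb f]_1$ on the sum $\mc H_0=(\ker\mb P_V\cap\mc D(\mb L_V))+\widetilde{\mc H}$, proves the Strichartz bound for $\Phi_\epsilon$ on \emph{unprojected} $\mb f\in\widetilde{\mc H}$ (so $T_\epsilon,\dot T_\epsilon$ are applied to smooth data without any projection in the way), extends by density to $\mc H$, and only then specializes to $\mb f\in\ker\mb P_V\cap\mc D(\mb L_V)$ where $\Phi_\epsilon(\mb f)=[\mb S_V(s)\mb f]_1-[\mb S_0(s)\mb f]_1$.

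A secondary, more easily repaired point: your contour-shift justification that $T_\epsilon(s)h+\dot T_\epsilon(s)g_1$ is $\epsilon$-independent leans on ``enough decay in $|\Im\lambda|$,'' but for $\dot T_\epsilon$ the integrand carries an extra factor $\lambda$, and the raw bound on $G_V-G_0$ only gives $\mc O(\langle\omega\rangle^{-1})$ there, which is not absolutely integrable; making this rigorous requires the integration-by-parts device from Proposition \ref{prop:Tdot}. The cleaner route (which you already have the ingredients for, once $\mb g\in\mc D(\mb L_V^{\mathrm{st}})$) is simply that $T_\epsilon(s)h+\dot T_\epsilon(s)g_1=[\mb S_V(s)\mb g]_1-[\mb S_0(s)\mb g]_1$ is manifestly $\epsilon$-independent, which is in effect what the paper uses before invoking monotone convergence.
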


\begin{proof}
Let $\mb f\in \mc D(\mb L_V)$. Then $(\mb
I-\mb P_V)\mb f\in \mc D(\mb L_V)$ and by \cite{EngNag00}, p.~234,
Corollary 5.15, we obtain
\begin{align*}
  \mb S_V(s)(\mb I-\mb P_V)\mb f
  &=\mb S_V^\mathrm{st}(s)(\mb I-\mb P_V)\mb f
    =\frac{1}{2\pi i}\lim_{N\to\infty}\int_{\epsilon-iN}^{\epsilon+iN}
    e^{\lambda s}\mb R_{\mb L_V^\mathrm{st}}(\lambda)(\mb I-\mb
    P_V)\mb fd\lambda
\end{align*}
for all $\epsilon>0$.
By Lemma \ref{lem:Green} there exists an $\epsilon_0>0$ such that for
all $\epsilon\in (0,\epsilon_0]$,
\begin{align*}   \mb S_V(s)(\mb I-\mb P_V)\mb f
  =\frac{1}{2\pi i}\lim_{N\to\infty}\int_{\epsilon-iN}^{\epsilon+iN}
    e^{\lambda s}\mb R_{\mb L_V}(\lambda)(\mb I-\mb
    P_V)\mb fd\lambda.
\end{align*}
Now we set $\mc H_0:=(\ker \mb P_V \cap \mc D(\mb L_V))+\widetilde{\mc
  H}$, $\mc Y:=L^p_\mathrm{loc}((0,\infty),L^q(0,1))$, and for
$\epsilon\in (0,\epsilon_0]$ and $\mb f\in \mc H_0$ we define 
\[ \Phi_\epsilon(\mb f)(s):=\frac{1}{2\pi
      i}\lim_{N\to\infty}\int_{\epsilon-iN}^{\epsilon+iN}
    e^{\lambda s}[\mb R_{\mb L_V}(\lambda)\mb f]_1
                                    d\lambda
                                    -[\mb S_0(s)\mb f]_1.
                                  \]
We claim that $\Phi_\epsilon$ maps $\mc H_0$ into $\mc Y$.                                  
Indeed, for $\mb f\in \ker\mb P_V \cap \mc D(\mb L_V)$, we have
$\Phi_\epsilon(\mb f)(s)=[\mb S_V(s)\mb f]_1-[\mb S_0(s)\mb f]_1$
and thus, $\Phi_\epsilon(\mb f) \in \mc Y$ by Remark \ref{rem:HinLq}. 
Furthermore, for $\mb f=(f_1,f_2)\in \widetilde{\mc H}$, Lemma
\ref{lem:Green} shows that
\[    \Phi_\epsilon(\mb f)(s)=T_\epsilon(s) \big
  (2|\cdot|f_1'+f_1+f_2 \big )
  +\dot T_\epsilon(s)
  f_1  \]
and Theorem \ref{thm:T} yields the bound
\begin{equation}
  \label{eq:bdPhieps}
  \|e^{-\epsilon s}\Phi_\epsilon(\mb f)(s)
  \|_{L^p_s(0,\infty)L^q(0,1)}\lesssim \|\mb f\|_{\mc H}.
  \end{equation}
Consequently, $\Phi_\epsilon(\mb f)\in \mc Y$ for all $\mb f\in \mc
H_0$, as claimed.
By density, $\Phi_\epsilon$ uniquely extends to a map $\Phi_\epsilon:
\mc H\to \mc Y$ and the bound \eqref{eq:bdPhieps} holds for all $\mb f\in \mc
H$. For $\mb f\in \ker\mb P_V\cap \mc D(\mb L_V)$ we obtain
\begin{align*}
  \|e^{-\epsilon s}[\mb S_V(s)\mb f]_1\|_{L^p_s(0,\infty)L^q(0,1)}
&\leq \|e^{-\epsilon s}\Phi_\epsilon(\mb
f)(s)\|_{L^p_s(0,\infty)L^q(0,1)}
                                                                     +\|e^{-\epsilon s}[\mb S_0(s)\mb f]_1\|_{L^p_s(0,\infty)L^q(0,1)} \\
  &\lesssim \|\mb f\|_{\mc H}
\end{align*}
by Proposition \ref{prop:Strich0} and monotone convergence yields
\[ \|[\mb S_V(s)\mb f]_1\|_{L^p_s(0,\infty)L^q(0,1)}\lesssim \|\mb
  f\|_{\mc H} \]
  which, by density, extends to all $\mb f\in \ker \mb P_V$. 
\end{proof}

\subsection{Analysis of the operator $T_\epsilon$}
First, we identify the integral kernel of $T_\epsilon$.

  \begin{lemma}
    \label{lem:GV-G0}
Let $\lambda\in \C\setminus \{0\}$ with $|\Re\lambda|\leq \frac14$ and
$\omega=\Im\lambda$. Then we have
   \[ G_V(y,x,\lambda)-G_0(y,x,\lambda)=\sum_{j=1}^4
     G_{V,j}(y,x,\lambda)=\sum_{j=1}^4 \widetilde G_{V,j}(y,x,\lambda),
   \]
   where
   \begin{align*}
     G_{V,1}(y,x,\lambda)&=1_{[0,1]}(x-y)\lambda^{-1}(1+y)^{-\lambda}(1-x)^{\lambda}\mc
                       O((1-y)^0(1-x)^0\langle
                       \omega\rangle^{-1}) \\
     G_{V,2}(y,x,\lambda)&=1_{[0,1]}(x-y)\lambda^{-1}(1-y)^{-\lambda}(1-x)^{\lambda}\mc
                       O((1-y)^0(1-x)^0\langle
                       \omega\rangle^{-1}) \\
              G_{V,3}(y,x,\lambda)&=1_{[0,1]}(y-x)\lambda^{-1}(1+y)^{-\lambda}(1-x)^{\lambda}\mc
                       O((1-y)^0(1-x)^0\langle
                       \omega\rangle^{-1}) \\
     G_{V,4}(y,x,\lambda)&=1_{[0,1]}(y-x)\lambda^{-1}(1+y)^{-\lambda}(1+x)^{\lambda}\mc
                       O((1-y)^0(1-x)^0\langle
                       \omega\rangle^{-1}) 
   \end{align*}
   as well as
   \begin{align*}
    \widetilde G_{V,1}(y,x,\lambda)&=1_{[0,1]}(x-y)(1-x)^\lambda\int_0^1 (1+y)^{-t\lambda}\mc
      O((1-y)^0(1-x)^0\langle\omega\rangle^0 t^0)dt \\
    \widetilde G_{V,2}(y,x,\lambda)
                                   &=1_{[0,1]}(x-y)\langle\log(1-y)\rangle(1-x)^\lambda \\
     &\quad\times\int_0^1 (1-y)^{-t\lambda}\mc
      O((1-y)^0(1-x)^0\langle\omega\rangle^0 t^0)dt \\
     \widetilde G_{V,3}(y,x,\lambda)
     &=1_{[0,1]}(y-x)(1+y)^{-\lambda}(1-x)^\lambda \\
       &\quad\times \int_0^1 (1+x)^{(1-t)\lambda} 
         \mc O((1-y)^0(1-x)^0\langle\omega\rangle^0 t^0)dt \\
     \widetilde G_{V,4}(y,x,\lambda)
     &=1_{[0,1]}(y-x)\langle\log(1-x)\rangle(1+y)^{-\lambda}(1+x)^\lambda \\
       &\quad\times \int_0^1 (1-x)^{(1-t)\lambda} 
         \mc O((1-y)^0(1-x)^0\langle\omega\rangle^0 t^0)dt 
   \end{align*}
   for all $y,x\in [0,1)$.
  \end{lemma}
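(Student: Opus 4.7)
The plan is to write $G_V$ and $G_0$ out explicitly and then repackage the difference in the two claimed ways. Using the Wronskian formula from Lemma~\ref{lem:W} together with
\[
u_1(y,\lambda)=(1+y)^{-\lambda}[1+a_1(y,\lambda)]
\]
and
\[
u_0(y,\lambda)=u_1(0,-\lambda)\,u_1(y,\lambda)-u_1(0,\lambda)(1-y)^{-\lambda}[1+a_1(y,-\lambda)],
\]
both of which follow directly from Proposition~\ref{prop:v1} and the definitions of $u_0$, $u_1$, one gets
\[
G_V(y,x,\lambda)=\frac{-(1-x)^{\lambda}}{2\lambda\,u_1(0,\lambda)}\cdot\bigl(\text{products of }(1\pm y)^{-\lambda},\ (1\pm x)^{-\lambda}\text{ and }[1+a_1(\cdot,\pm\lambda)]\text{-factors}\bigr),
\]
with the precise form depending on whether $y\le x$ or $x\le y$. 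The $V\equiv 0$ case corresponds to $a_1\equiv 0$ and $u_1(0,\pm\lambda)=1$, so $G_V-G_0$ is exactly the sum of those pieces that vanish identically when all $a_1$-factors are set to zero.

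For the first decomposition I would simply collect the resulting terms by the exponential profile they carry: in the region $y\le x$, the profiles $(1+y)^{-\lambda}(1-x)^\lambda$ and $(1-y)^{-\lambda}(1-x)^\lambda$ yield $G_{V,1}$ and $G_{V,2}$; in the region $x\le y$, the profiles $(1+y)^{-\lambda}(1-x)^\lambda$ and $(1+y)^{-\lambda}(1+x)^\lambda$ yield $G_{V,3}$ and $G_{V,4}$. Each coefficient is a rational expression in $a_1(y,\pm\lambda)$, $a_1(x,\pm\lambda)$, and $u_1(0,\pm\lambda)=1+a_1(0,\pm\lambda)$ that vanishes whenever all $a_1$'s are zero, and hence equals $\mc O(\langle\omega\rangle^{-1})$ by Proposition~\ref{prop:v1}. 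The $1/\lambda$ in front of each piece comes from the Wronskian. The full symbol-type bounds follow term-by-term: each $y$-derivative is paid for by the $(1-y)^{1/2-k}$ factor of Proposition~\ref{prop:v1}, and each $\omega$-derivative gains an extra $\langle\omega\rangle^{-1}$.

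For the second decomposition the goal is to trade the $1/\lambda$ factor for a parameter integral, via the fundamental-theorem-of-calculus identities
\[
(1\pm y)^{-\lambda}-1=-\lambda\log(1\pm y)\int_0^1(1\pm y)^{-t\lambda}\,dt,\qquad (1\pm x)^{\lambda}-1=\lambda\log(1\pm x)\int_0^1(1\pm x)^{t\lambda}\,dt.
\]
The idea is to reorganize each correction bracket so that every $a_1$-piece appears multiplied by such a difference; the $1/\lambda$ factor is then absorbed by the identity and replaced by an integral in $t\in[0,1]$, with the accompanying logarithm pulled out as a prefactor. Since $\log(1+y)$ and $\log(1+x)$ stay bounded on $[0,1)$, they are absorbed into the $\mc O(1)$ symbol, while $\log(1-y)$ and $\log(1-x)$ blow up at the endpoint and are retained as the $\langle\log(1-y)\rangle$ and $\langle\log(1-x)\rangle$ prefactors in $\widetilde G_{V,2}$ and $\widetilde G_{V,4}$.

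The main obstacle is the algebraic bookkeeping in this second step: each $a_1$-piece must be paired with a matching $(1\pm y)^{-\lambda}-1$ or $(1\pm x)^\lambda-1$ so that no residual $1/\lambda$ survives anywhere. Once the algebra is organized correctly, differentiation under the $t$-integral combined with the symbol-type bounds on $a_1$ from Proposition~\ref{prop:v1}, and the trivial uniform-in-$t$ control of the kernels $(1\pm y)^{-t\lambda}$ and $(1\pm x)^{(1-t)\lambda}$, yields the claimed derivative estimates for the $\widetilde G_{V,j}$.
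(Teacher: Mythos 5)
Your treatment of the first decomposition ($G_{V,j}$) matches the paper: expand $u_1$, $u_0$, and the Wronskian via Proposition~\ref{prop:v1}, subtract the $a_1\equiv 0$ expressions, and collect by exponential profile. That part is fine.

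For the second decomposition, however, the strategy you propose has a real gap. You want to ``trade the $1/\lambda$ for a $t$-integral'' by pairing every $a_1$-coefficient with a power-function difference $(1\pm y)^{-\lambda}-1$ or $(1\pm x)^{\lambda}-1$, and you explicitly posit that this can be arranged ``so that no residual $1/\lambda$ survives anywhere.'' That cannot work: after replacing each power $(1\pm y)^{-\lambda}$ by $1+[(1\pm y)^{-\lambda}-1]$ in the correction bracket, one is left with a \emph{pure $a_1$-residual} that carries no power-function factor at all. In the region $y\le x$ this residual is proportional to
\[
\frac{[1+a_1(0,-\lambda)][1+a_1(y,\lambda)]}{1+a_1(0,\lambda)}-[1+a_1(y,-\lambda)],
\]
times $[1+a_1(x,\lambda)]$. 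This expression does vanish at $\lambda=0$, but only because of the $\lambda\mapsto-\lambda$ antisymmetry built into $u_0$; it is \emph{not} of the form ``something times $(1\pm y)^{-\lambda}-1$.'' So the cancellation at $\lambda=0$ is not a sum of term-by-term vanishings that your pairing can capture — it is genuinely a cross-cancellation between the $(1+y)^{-\lambda}$- and the $(1-y)^{-\lambda}$-branches. Relatedly, the $(1\pm x)^{\lambda}-1$ identities you propose are not actually the source of the $(1\pm x)^{(1-t)\lambda}$ kernels in $\widetilde G_{V,3}$, $\widetilde G_{V,4}$: those come from combining the outer $(1-x^2)^\lambda$ Wronskian factor with the $(1\pm x)^{-t\lambda}$ kernels produced by FTC on $u_0(x,\cdot)$.

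The paper avoids all of this by applying the fundamental theorem of calculus once, globally, to $u_0(\cdot,\lambda)$ itself, using only $u_0(\cdot,0)=0$: writing $u_0(y,\lambda)=\int_0^1\partial_t u_0(y,t\lambda)\,dt$ simultaneously produces the overall factor $\lambda$, the $(1\pm y)^{-t\lambda}$ kernels, the $\langle\log(1-y)\rangle$ prefactor, and a $t$-dependent symbol, with no residual to account for. This is the missing idea in your write-up; your local pairing scheme would still need a separate FTC on the residual and an argument that the result can be re-cast in the $\widetilde G_{V,j}$ form, which you neither notice nor supply.
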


  \begin{proof}
By definition and Proposition \ref{prop:v1},
 \begin{align*}
   u_1(y,\lambda)
   &=(1-y^2)^{-\frac12(1+\lambda)}v_1(y,\lambda)=(1-y^2)^{-\frac12(1+\lambda)}
   \psi_1(y,\lambda)[1+\mc
     O((1-y)^\frac12\langle\omega\rangle^{-1})] \\
   &=(1+y)^{-\lambda}[1+\mc O((1-y)^\frac12\langle\omega\rangle^{-1})]
 \end{align*}
 as well as
 \begin{align*}
   u_0(y,\lambda)
   &=(1-y^2)^{-\frac12(1+\lambda)}\left [v_1(0,-\lambda)
                v_1(y,\lambda)-v_1(0,\lambda) v_1(y,-\lambda) \right ]
   \\
   &=(1-y^2)^{-\frac12(1+\lambda)}\psi_1(y,\lambda)[1+\mc
     O((1-y)^0\langle\omega\rangle^{-1})] \\
     &\quad -(1-y^2)^{-\frac12(1+\lambda)}\psi_1(y,-\lambda)[1+\mc
       O((1-y)^0\langle\omega\rangle^{-1})] \\
   &=(1+y)^{-\lambda}[1+\mc
     O((1-y)^0\langle\omega\rangle^{-1})]
     -(1-y)^{-\lambda}[1+\mc
       O((1-y)^0\langle\omega\rangle^{-1})].
 \end{align*}
 Finally, by Lemma \ref{lem:W},
 \begin{align*}
   \frac{1}{(1-x^2)W(u_0(\cdot,\lambda),u_1(\cdot,\lambda))(x)}
   &=\frac{(1-x^2)^\lambda}{2\lambda u_1(0,\lambda)}
     =\frac{(1-x^2)^\lambda}{2\lambda}[1+\mc O(\langle\omega\rangle^{-1})]
 \end{align*}
 and the first representation follows.

 For the second representation, we need to exploit the fact that
  $u_0(y,0)=0$ to git rid of the apparent singularity of the Green
  function at $\lambda=0$. By the fundamental theorem of calculus we
  obtain
  \begin{align*}
    u_0(y,\lambda)
    &=\int_0^1 \partial_t u_0(y,t\lambda)dt
    =\lambda \int_0^1 (1+y)^{-t\lambda}\mc O((1-y)^0\langle
      t\omega\rangle^0)dt \\
      &\quad +\lambda\langle\log(1-y)\rangle\int_0^1 (1-y)^{-t\lambda}\mc O((1-y)^0\langle
      t\omega\rangle^0)dt.
  \end{align*}
  Inserting this expression for $u_0$ in the definition of the Green
  function yields the second representation.
\end{proof}

In order to estimate the kernel of the operators $T_\epsilon$ and
$\dot T_\epsilon$, we make frequent use of the following elementary bound.

\begin{lemma}
  \label{lem:a-b}
  We have $\langle a-b\rangle\gtrsim \langle a\rangle^{-1}\langle
  b\rangle$ for all $a,b\in \R$.
\end{lemma}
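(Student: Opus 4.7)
The plan is to recognize this as a form of Peetre's inequality and prove it by a short direct computation using the definition $\langle x\rangle = \sqrt{1+x^2}$. The inequality to prove, after squaring, is $1+(a-b)^2 \gtrsim \frac{1+b^2}{1+a^2}$, i.e., $(1+(a-b)^2)(1+a^2) \gtrsim 1+b^2$.

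The key algebraic step is to write $b = (b-a) + a$ and expand:
\[
1 + b^2 = 1 + \big((b-a) + a\big)^2 \leq 1 + 2(b-a)^2 + 2a^2 \leq 2\bigl(1+(a-b)^2\bigr)\bigl(1+a^2\bigr),
\]
where the first step uses the elementary bound $(x+y)^2 \leq 2x^2 + 2y^2$ and the second just drops a nonnegative cross term after expanding the product on the right. Taking square roots gives $\langle b\rangle \leq \sqrt{2}\,\langle a-b\rangle \langle a\rangle$, which rearranges to the claimed bound $\langle a-b\rangle \geq \frac{1}{\sqrt{2}}\,\langle a\rangle^{-1}\langle b\rangle$.

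There is no real obstacle here; the only thing to watch for is to make sure the bound is symmetric in the right way (note the asymmetry between $\langle a\rangle^{-1}$ and $\langle b\rangle$ on the right-hand side) — this is reflected in the asymmetric use of $b = a + (b-a)$ rather than $a = b + (a-b)$ in the expansion. Alternatively, one can think of this as the statement that the map $a \mapsto \langle a\rangle$ is slowly varying at scale $\langle a\rangle$, which is exactly what Peetre's inequality expresses, and cite it as such for brevity.
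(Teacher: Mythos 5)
Your proof is correct and it is a genuinely different argument from the paper's. The paper proceeds by a case split on the relative sizes of $|a|$ and $|b|$: if $|b|\leq 2|a|$ then $\langle a\rangle^{-1}\langle b\rangle\lesssim 1\lesssim\langle a-b\rangle$, while if $|b|\geq 2|a|$ one uses the triangle inequality to get $\langle a-b\rangle\simeq 1+|a-b|\geq 1+\tfrac12|b|\simeq\langle b\rangle\gtrsim\langle a\rangle^{-1}\langle b\rangle$. You instead give the uniform algebraic computation behind Peetre's inequality: squaring, writing $b=(b-a)+a$, and using $(x+y)^2\leq 2x^2+2y^2$ together with the fact that expanding $2\bigl(1+(a-b)^2\bigr)\bigl(1+a^2\bigr)$ produces all the terms of $1+2(b-a)^2+2a^2$ plus the nonnegative leftover $1+2a^2(a-b)^2$. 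Your route avoids the case distinction and yields an explicit constant $\sqrt 2$, and your observation that the asymmetry in the statement (inverse power on $\langle a\rangle$, positive power on $\langle b\rangle$) is mirrored by the asymmetric choice $b=(b-a)+a$ rather than $a=(a-b)+b$ is the right thing to keep in mind; the paper's case split, by contrast, keeps the argument closer to the triangle inequality and is a little more geometric in flavor. Either is a perfectly acceptable proof of the lemma.
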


\begin{proof}
  If $|b|\leq 2|a|$ we have
  $\langle a\rangle^{-1}\langle b\rangle\lesssim 1\lesssim \langle
  a-b\rangle$ and if $|b|\geq 2|a|$,
  \[ \langle a-b\rangle\simeq 1+|a-b|\geq 1+|b|-|a|\geq
    1+\tfrac12|b|\simeq \langle b\rangle\gtrsim \langle
    a\rangle^{-1}\langle b\rangle. \]
\end{proof}

\begin{proposition}
  \label{prop:K}
 There exists an $\epsilon_0>0$ such that
  \[ K_\epsilon(s,y,x):=\frac{1}{2\pi
      i}\lim_{N\to\infty}\int_{\epsilon-iN}^{\epsilon+iN}
   e^{\lambda s} \left [G_V(y,x,\lambda)-G_0(y,x,\lambda)\right
   ]d\lambda \]
 exists for any $(s,y,x)\in [0,\infty)\times [0,1)\times [0,1)$ and
 $\epsilon\in (0,\epsilon_0]$ and we
 have
 \[ T_\epsilon(s) f(y)=\int_0^1 K_\epsilon(s,y,x)f(x)dx. \]
 Furthermore,
 \[ |K_\epsilon(s,y,x)|\lesssim e^{\epsilon s}\langle \log(1-y)\rangle^3\langle
   s+\log(1-x)\rangle^{-2} \]
 for all $(s,y,x)\in [0,\infty)\times [0,1)\times [0,1)$ and all
 $\epsilon\in (0,\epsilon_0]$.
\end{proposition}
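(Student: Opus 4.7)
The kernel $K_\epsilon(s,y,x)$ is the inverse Laplace transform along $\{\Re\lambda=\epsilon\}$ of the difference $[G_V-G_0](y,x,\lambda)$. Using Lemma~\ref{lem:GV-G0}, I would split this difference as $\sum_{j=1}^4 G_{V,j}$ for the high-frequency part of the contour ($|\Im\lambda|\ge 1$), where the explicit $\lambda^{-1}\mathcal{O}(\langle\omega\rangle^{-1})$ behaviour gives an absolutely integrable $\omega$-integrand, and as $\sum_{j=1}^4 \widetilde G_{V,j}$ for the low-frequency part ($|\Im\lambda|\le 1$), where the apparent singularity at $\lambda=0$ has been removed. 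Together these establish that $K_\epsilon$ is well defined pointwise and permit Fubini, yielding the representation $T_\epsilon(s)f(y)=\int_0^1 K_\epsilon(s,y,x)f(x)\,dx$.

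For the pointwise bound, write $\lambda=\epsilon+i\omega$ and extract the oscillating factor from each piece: for example, in $G_{V,j}$ the product $e^{\lambda s}(1\pm y)^{\mp\lambda}(1\pm x)^{\pm\lambda}$ factors as $e^{\epsilon\tau_j}e^{i\omega\tau_j}$ for a real phase $\tau_j$ which is one of $s\pm\log(1\pm y)\pm\log(1\pm x)$. The remaining amplitude is smooth and symbol-type in $\omega$, so two integrations by parts in $\omega$ transfer $\langle\tau_j\rangle^{-2}$ decay onto the integral at the cost of amplitude derivatives that remain absolutely integrable. The prefactor $e^{\epsilon\tau_j}$ is bounded by $e^{\epsilon s}$ up to a harmless power of $(1-x)^\epsilon\le 1$.

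The main analytical step is to convert the decay $\langle\tau_j\rangle^{-2}$ into the stated $\langle s+\log(1-x)\rangle^{-2}$ with at most $\langle\log(1-y)\rangle^3$ loss. Since $\log(1+y)$ and $\log(1+x)$ are uniformly bounded on $[0,1)$, $\tau_1$ and $\tau_3$ are already comparable to $s+\log(1-x)$, giving the desired decay with no loss. For $\tau_2=s+\log(1-x)-\log(1-y)$, Lemma~\ref{lem:a-b} produces $\langle\tau_2\rangle\gtrsim\langle\log(1-y)\rangle^{-1}\langle s+\log(1-x)\rangle$, contributing a $\langle\log(1-y)\rangle^2$ loss. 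For $\tau_4\approx s$, the same lemma with $a=s$, $b=s+\log(1-x)$ yields $\langle\tau_4\rangle^{-2}\lesssim\langle\log(1-x)\rangle^2\langle s+\log(1-x)\rangle^{-2}$; since both $G_{V,4}$ and $\widetilde G_{V,4}$ are supported on $\{y\ge x\}$, one has $\langle\log(1-x)\rangle\le\langle\log(1-y)\rangle$, and this fits within the claimed $\langle\log(1-y)\rangle^3$ budget once the explicit $\langle\log(1-x)\rangle$ prefactor of $\widetilde G_{V,4}$ is accounted for in the low-frequency regime.

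The main obstacle will be the bookkeeping around the low-frequency piece, where the $\widetilde G_{V,j}$ representation introduces an auxiliary $t$-integration and logarithmic prefactors; one must verify that integrating by parts in $\omega$ under the $t$-integral produces no uncontrolled boundary terms and that the resulting $t$-integrals are uniformly bounded. Matching the two representations at $|\omega|=1$ and tracking the precise power of $\langle\log(1-y)\rangle$ in the worst case $G_{V,4}$ versus $\widetilde G_{V,4}$ is what pins down the exponent $3$ in the final bound.
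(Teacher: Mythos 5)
Your proposal follows essentially the same route as the paper's proof: the same high/low-frequency split using $G_{V,j}$ versus $\widetilde G_{V,j}$, the same extraction of oscillating phases and double integration by parts in $\omega$, and the same use of Lemma~\ref{lem:a-b} together with the support conditions ($y\leq x$ for $j\in\{1,2\}$, $y\geq x$ for $j\in\{3,4\}$) to convert the various $\langle\tau_j\rangle^{-2}$ into $\langle s+\log(1-x)\rangle^{-2}$ at the cost of at most $\langle\log(1-y)\rangle^3$. Your bookkeeping of where the powers of $\langle\log(1-y)\rangle$ come from (two from Lemma~\ref{lem:a-b} applied to $\tau_2$ or $\tau_4$, one more from the explicit logarithmic prefactor in $\widetilde G_{V,2}$ and $\widetilde G_{V,4}$, using $y\geq x$ in the latter case) matches the paper's computation.
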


\begin{proof}
 Lemma \ref{lem:GV-G0} yields the rough bound
\[ |G_V(y,x,\lambda)-G_0(y,x,\lambda)|\lesssim
  (1-y)^{-\frac14}(1-x)^{-\frac14}\langle\lambda\rangle^{-2} \]
for all $x,y\in [0,1)$ and $\lambda\in \C$ with $\Re\lambda\in
(0,\epsilon_0]$, provided $\epsilon_0>0$ is sufficiently small.
As a consequence, the existence of $K_\epsilon(s,y,x)$ follows and
Fubini's theorem yields the stated expression for $T_\epsilon(s)f$.

 To prove the bound on $K_\epsilon$, we need to distinguish between $\lambda$
 small and $\lambda$ large. To this end, we use a standard cut-off
 $\chi: \R\to [0,1]$ that satisfies $\chi(t)=1$ if
 $|t|\leq 1$ and $\chi(t)=0$ if $|t|\geq 2$.
 Then we split
 \begin{align*}
   K_\epsilon(s,y,x)
   &=\frac{e^{\epsilon s}}{2\pi}\int_\R e^{i\omega
     s}\left
   [G_V(y,x,\epsilon+i\omega)-G_0(y,x,\epsilon+i\omega)\right ]d\omega
   \\
   &=:\frac{e^{\epsilon s}}{2\pi}\left [I_\epsilon(s,y,x)+J_\epsilon(s,y,x)\right]
 \end{align*}
 and use Lemma \ref{lem:GV-G0} to decompose
 \begin{align*} I_\epsilon(s,y,x)
   &=\int_\R \chi(\omega)e^{i\omega s}\left
   [G_V(y,x,\epsilon+i\omega)-G_0(y,x,\epsilon+i\omega)\right
     ]d\omega \\
   &=\sum_{j=1}^4\underbrace{\int_\R \chi(\omega)e^{i\omega s}
     \widetilde G_{V,j}(y,x,\epsilon+i\omega)d\omega}_{=:I_{\epsilon,j}(s,y,x)}
   \end{align*}
and
\begin{align*}
  J_\epsilon(s,y,x)&=\int_\R [1-\chi(\omega)]e^{i\omega s}\left
   [G_V(y,x,\epsilon+i\omega)-G_0(y,x,\epsilon+i\omega)\right
                     ]d\omega \\
  &=\sum_{j=0}^4 \underbrace{\int_\R [1-\chi(\omega)]e^{i\omega
    s}G_{V,j}(y,x,\epsilon+i\omega)d\omega}_{=:J_{\epsilon,j}(s,y,x)}.
\end{align*}
By Lemma \ref{lem:GV-G0} we have
\begin{align*}
[1-\chi(\omega)]G_{V,1}(y,x,\epsilon+i\omega) 
&=[1-\chi(\omega)](1+y)^{-\epsilon}(1-x)^\epsilon(1+y)^{-i\omega}(1-x)^{i\omega}
  \\
&\quad \times (\epsilon+i\omega)^{-1}\mc
       O((1-y)^0(1-x)^0\langle\omega\rangle^{-1}) \\
&=e^{i\omega(-\log(1+y)+\log(1-x))}\mc O((1-y)^0(1-x)^0\langle\omega\rangle^{-2})
\end{align*}
and thus,
\begin{align*}
  |J_{\epsilon,1}(s,y,x)|
&\lesssim \left |\int_\R e^{i\omega(s-\log(1+y)+\log(1-x))}
\mc O((1-y)^0(1-x)^0\langle\omega\rangle^{-2})d\omega\right
                            | \\
&\lesssim \langle s-\log(1+y)+\log(1-x)\rangle^{-2} \\
&\lesssim \langle s+\log(1-x)\rangle^{-2}
\end{align*}
by means of two integrations by parts.
For $J_{\epsilon,2}$ we note that
\begin{align*}
  [1-\chi(\omega)]G_{V,2}(y,x,\epsilon+i\omega)&=1_{[0,1]}(x-y)[1-\chi(\omega)]
                                                 (1-y)^{-\epsilon}(1-x)^\epsilon
  (1-y)^{-i\omega}(1-x)^{i\omega} \\
  &\quad \times(\epsilon+i\omega)^{-1}\mc
    O((1-y)^0(1-x)^0\langle\omega\rangle^{-1}) \\
  &=e^{i\omega(-\log(1-y)+\log(1-x))}
\mc
    O((1-y)^0(1-x)^0\langle\omega\rangle^{-2}) 
  \end{align*}
  and we obtain
  \[ |J_{\epsilon,2}(s,y,x)|\lesssim \langle
    s-\log(1-y)+\log(1-x)\rangle^{-2}
    \lesssim \langle\log(1-y)\rangle^2 \langle
    s+\log(1-x)\rangle^{-2}.\]
  The terms $J_{\epsilon,3}$ and $J_{\epsilon,4}$ are handled
  analogously and in summary, we obtain
  \[ |J_{\epsilon}(s,y,x)|\lesssim \langle\log(1-y)\rangle^2\langle
    s+\log(1-x)\rangle^{-2}. \]

Now we turn to the low-frequency part $I_\epsilon$. By Lemma
\ref{lem:GV-G0} we have
\begin{align*}
  \chi(\omega)\widetilde G_{V,1}(y,x,\epsilon+i\omega)
  &=\int_0^1 (1+y)^{-it\omega}(1-x)^{i\omega}\mc
    O((1-y)^0(1-x)^0\langle\omega\rangle^{-2} t^0)dt \\
  &=\int_0^1 e^{i\omega(-t\log(1+y)+\log(1-x))}\mc O((1-y)^0(1-x)^0\langle\omega\rangle^{-2} t^0)dt
\end{align*}
and thus, by Fubini and two integrations by parts,
\begin{align*}
 | I_{\epsilon,1}(s,y,x)|
  &\leq\int_0^1 \left |\int_\R e^{i\omega(s-t\log(1+y)+\log(1-x))}\mc
    O((1-y)^0(1-x)^0\langle\omega\rangle^{-2}
    t^0)d\omega\right | dt \\
  &\lesssim \int_0^1 \langle s-t\log(1+y)+\log(1-x)\rangle^{-2} dt \\
    &\lesssim \int_0^1 \langle t\log(1+y)\rangle^2\langle s+\log(1-x)\rangle^{-2}dt \\
  &\lesssim \langle s+\log(1-x)\rangle^{-2}.
\end{align*}
For $\widetilde G_{V,2}$ we have
\[ \chi(\omega)\widetilde G_{V,2}(y,x,\epsilon+i\omega)
  =\int_0^1 \langle\log(1-y)\rangle
  (1-y)^{-it\omega}(1-x)^{i\omega}\mc
  O((1-y)^0(1-x)^0\langle\omega\rangle^{-2} t^0)dt \]
and thus,
\begin{align*}
  \frac{|I_{\epsilon,2}(s,y,x)|}{\langle\log(1-y)\rangle}
  &\leq \int_0^1 \left |\int_\R e^{i\omega(s-t\log(1-y)+\log(1-x))}
    \mc O((1-y)^0(1-x)^0\langle\omega\rangle^{-2}
    t^0)d\omega\right |dt \\
  &\lesssim \int_0^1 \langle s-t\log(1-y)+\log(1-x)\rangle^{-2}dt \\
  &\lesssim \int_0^1 \langle t\log(1-y)\rangle^2\langle
    s+\log(1-x)\rangle^{-2}dt \\
  &\lesssim \langle\log(1-y)\rangle^2\langle s+\log(1-x)\rangle^{-2}.
\end{align*}
The corresponding bound for $I_{\epsilon,3}$ follows analogously and
for $I_{\epsilon,4}$ we note that
\begin{align*}
  \chi(\omega)\widetilde G_{V,4}(y,x,\epsilon+i\omega)
  &=\int_0^1
    1_{[0,1]}(y-x)\langle\log(1-x)\rangle(1+y)^{-i\omega}(1+x)^{i\omega}
    (1-x)^{i(1-t)\omega} \\
  &\quad\times \mc
    O((1-y)^0(1-x)^0\langle\omega\rangle^{-2}t^0)dt \\
  &=\int_0^1 1_{[0,1]}(y-x)\langle\log(1-y)\rangle
    e^{i\omega(-\log(1+y)+\log(1+x)+(1-t)\log(1-x))} \\
  &\quad\times
    \mc O((1-y)^0(1-x)^0\langle\omega\rangle^{-2}t^0)dt 
\end{align*}
and thus,
\begin{align*}
  \frac{|I_{\epsilon,4}(s,y,x)|}{\langle\log(1-y)\rangle}
  &\leq \int_0^1 \Big |\int_\R 1_{[0,1]}(y-x)e^{i\omega(s-\log(1+y)+\log(1+x)+(1-t)\log(1-x))} \\
  &\quad\times
    \mc O((1-y)^0(1-x)^0\langle\omega\rangle^{-2}t^0)d\omega
    \Big |
    dt  \\
  &\lesssim \int_0^1 1_{[0,1]}(y-x)\langle
    s-\log(1+y)+\log(1+x)+(1-t)\log(1-x)\rangle^{-2}dt \\
  &\lesssim \int_0^1 1_{[0,1]}(y-x)\langle
    -\log(1+y)+\log(1+x)-t\log(1-x)\rangle^2 \\
  &\quad\times \langle s+\log(1-x)\rangle^{-2} dt \\
  &\lesssim \langle\log(1-y)\rangle^2 \langle s+\log(1-x)\rangle^{-2}.
\end{align*}
\end{proof}

Now we can conclude the desired bound for the operator $T_\epsilon$.

\begin{lemma}
  \label{lem:T}
  Let $p\in [2,\infty]$ and $q\in [1,\infty)$. 
  Then there exists an $\epsilon_0>0$ such that
  \[ \|e^{-\epsilon s}T_\epsilon(s)f\|_{L^p_s(0,\infty)L^q(0,1)}\lesssim \left
      \|(1-|\cdot|^2)^\frac12 f\right \|_{L^2(0,1)} \]
  for all $f\in C([0,1])$ and $\epsilon\in (0,\epsilon_0]$.
\end{lemma}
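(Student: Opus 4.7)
The plan is to derive the estimate directly from the pointwise kernel bound in Proposition \ref{prop:K}. Writing $T_\epsilon(s)f(y) = \int_0^1 K_\epsilon(s,y,x) f(x)\,dx$ and using the bound
\[ |K_\epsilon(s,y,x)| \lesssim e^{\epsilon s} \langle\log(1-y)\rangle^3 \langle s+\log(1-x)\rangle^{-2}, \]
we separate the $y$ and $x$ dependence cleanly. First I would take the $L^q(0,1)$ norm in $y$. Since $\langle\log(1-\cdot)\rangle^3 \in L^q(0,1)$ for every $q\in[1,\infty)$ (the integrand has only a logarithmic singularity at $y=1$), we obtain
\[ \|e^{-\epsilon s}T_\epsilon(s) f\|_{L^q_y(0,1)} \lesssim \int_0^1 \langle s+\log(1-x)\rangle^{-2}|f(x)|\,dx. \]

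Next I would reduce the remaining $L^p_s(0,\infty)$ bound to a one-dimensional convolution estimate via the change of variables $x=1-e^{-\tau}$, $dx=e^{-\tau}\,d\tau$. Setting $F(\tau):=|f(1-e^{-\tau})|e^{-\tau}\cdot 1_{(0,\infty)}(\tau)$, the integral above becomes
\[ \int_0^\infty \langle s-\tau\rangle^{-2} F(\tau)\,d\tau = (G\ast F)(s),\qquad G(t):=\langle t\rangle^{-2}. \]
Since $G\in L^r(\R)$ for all $r\in[1,\infty]$, Young's convolution inequality gives $\|G\ast F\|_{L^p(\R)}\lesssim \|F\|_{L^2(\R)}$ for any $p\in[2,\infty]$, by choosing $r=\frac{2p}{p+2}\in[1,2]$.

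Finally I would convert the $L^2_\tau$ norm of $F$ back into a weighted $L^2$ norm in $x$. Undoing the substitution gives
\[ \|F\|_{L^2(\R)}^2 = \int_0^1 |f(x)|^2 (1-x)\,dx \lesssim \bigl\|(1-|\cdot|^2)^{1/2} f\bigr\|_{L^2(0,1)}^2, \]
since $1+x$ is bounded on $[0,1]$. Combining everything yields the claimed bound. There is no genuine obstacle here: the heavy lifting (contour deformation, stationary-phase style bounds on the kernel) was already done in Proposition \ref{prop:K}, and the remaining step is essentially Young's inequality after a logarithmic change of variables that turns $\log(1-x)$ into a translation variable. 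The only minor point of care is to verify that the endpoint cases $p=2$ and $p=\infty$ are indeed covered by Young's inequality, which they are since $\langle\cdot\rangle^{-2}\in L^1\cap L^2$.
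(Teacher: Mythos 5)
Your proof is correct and follows essentially the same route as the paper: both rely on the kernel bound from Proposition~\ref{prop:K}, take the $L^q_y$ norm first to absorb the $\langle\log(1-y)\rangle^3$ factor, pass to the convolution form via $x=1-e^{-\tau}$, and invoke Young's inequality. The only (minor) difference is in the last step: the paper proves the endpoints $p=2$ (via $L^1\ast L^2\to L^2$) and $p=\infty$ (via Cauchy--Schwarz) separately and leaves the intermediate cases to interpolation, whereas you apply Young's inequality with the exponent $r=\tfrac{2p}{p+2}\in[1,2]$ to obtain all $p\in[2,\infty]$ at once, using $\langle\cdot\rangle^{-2}\in L^1\cap L^2$; this is a clean and slightly more uniform way to finish.
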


\begin{proof}
  By Proposition \ref{prop:K} we have
  \begin{align*}
    |e^{-\epsilon s}T_\epsilon(s)f(y)|
    &\leq \int_0^1 e^{-\epsilon s}|K_\epsilon(s,y,x)||f(x)|dx \\
    &=\int_0^\infty e^{-\epsilon
      s}|K_\epsilon(s,y,1-e^{-\eta})||f(1-e^{-\eta})|e^{-\eta}d\eta \\
    &\lesssim \langle\log(1-y)\rangle^3 \int_\R \langle
      s-\eta\rangle^{-2}1_{[0,\infty)}(\eta)
      |f(1-e^{-\eta})|e^{-\eta}d\eta
  \end{align*}
  and thus,
  \begin{align*}
    \|e^{-\epsilon s}T_\epsilon(s)f\|_{L^q(0,1)}\lesssim \int_\R
    \langle s-\eta\rangle^{-2}|1_{[0,\infty)}(\eta)f(1-e^{-\eta})| e^{-\eta}d\eta.
  \end{align*}
Consequently, Young's inequality yields
  \begin{align*}
    \|e^{-\epsilon s}T_\epsilon(s)f\|_{L^2_s(0,\infty)L^q(0,1)}^2
    &\lesssim \|\langle\cdot\rangle^{-2}\|_{L^1(\R)}^2 \int_0^\infty
      |f(1-e^{-\eta})|^2 e^{-2\eta}d\eta \\
    &\lesssim \int_0^1 (1-x)|f(x)|^2 dx.
  \end{align*}
  On the other hand, by Cauchy-Schwarz, we also have
  \[ \|e^{-\epsilon s}T_\epsilon(s)f\|_{L^q(0,1)}^2\lesssim \int_0^1
    (1-x)|f(x)|^2 dx \]
  for all $s\geq 0$ and this yields
  \[ \|e^{-\epsilon
      s}T_\epsilon(s)f\|_{L^\infty_s(0,\infty)L^q(0,1)}\lesssim \left
      \|(1-|\cdot|^2)^\frac12 f\right \|_{L^2(0,1)}. \]
\end{proof}

\subsection{Analysis of the operator $\dot T_\epsilon$}

The treatment of the operator $\dot T_\epsilon$ is very similar.

\begin{proposition}
  \label{prop:Tdot}
Let $p\in [2,\infty]$ and $q\in [1,\infty)$. Then there exists an
$\epsilon_0>0$ such that
\begin{align*}
  \|e^{-\epsilon s}\dot T_\epsilon(s)f\|_{L^p_s(0,\infty)L^q(0,1)}
  \lesssim \left \|(1-|\cdot|^2)^\frac12 f'\right\|_{L^2(0,1)}+\|f\|_{L^2(0,1)}
\end{align*}
for all $f\in C^1([0,1])$ and $\epsilon\in (0,\epsilon_0]$.
\end{proposition}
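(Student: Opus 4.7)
The plan is to follow the strategy of Lemma \ref{lem:T} for $T_\epsilon$, with the additional twist needed to absorb the extra factor of $\lambda$ in the Bromwich integrand for $\dot T_\epsilon$. By Lemma \ref{lem:GV-G0}, $\lambda(G_V-G_0)$ is of order $\mc O(\langle\omega\rangle^{-1})$ rather than $\mc O(\langle\omega\rangle^{-2})$, which is one power short of what is needed for two integrations by parts in $\omega$ during the Fourier--Laplace inversion carried out in Proposition \ref{prop:K}. The remedy, suggested by the fact that the target bound controls $\dot T_\epsilon f$ in terms of $f'$ rather than $f$, is to integrate by parts in $x$ and trade a derivative on the exponential $(1\pm x)^\lambda$ for one on $f$.

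The key identity is $\lambda(1\pm x)^\lambda = \mp\frac{\lambda}{\lambda+1}\partial_x(1\pm x)^{\lambda+1}$. Applied to each piece $G_{V,j}$ of the decomposition from Lemma \ref{lem:GV-G0} and followed by integration by parts in $x$ against $f$, the explicit $\lambda$ is replaced by the bounded factor $\lambda/(\lambda+1) = \mc O(1)$, and the derivative $\partial_x$ falls either on $f$ (producing $f'$) or on the smooth $\mc O$-symbol. In either case the integrand regains the order $\mc O(\langle\omega\rangle^{-2})$, so that the bulk contributions yield kernels of exactly the same type as in Proposition \ref{prop:K}. Repeating the argument of Lemma \ref{lem:T} verbatim bounds these bulk terms by $\|(1-|\cdot|^2)^{1/2}f'\|_{L^2(0,1)} + \|(1-|\cdot|^2)^{1/2}f\|_{L^2(0,1)}$, which is majorized by the desired right-hand side.

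The integration by parts produces boundary contributions at $x\in\{0,y,1\}$. The boundary at $x=1$ vanishes since $(1-x)^{\lambda+1}|_{x=1}=0$ for $\Re\lambda>-1$. The boundaries at $x=0$ (from $G_{V,3},G_{V,4}$) and at $x=y$ give diagonal operators $f(0)\cdot k_0(s,y)$ and $f(y)\cdot k_{=}(s,y)$, where $k_0$ and $k_{=}$ are Fourier integrals of $\mc O(\langle\omega\rangle^{-2})$ symbols and admit the same logarithmically-weighted decay estimates as in Proposition \ref{prop:K}. The pointwise value $|f(0)|$ is controlled by the one-dimensional Sobolev embedding on a compact subinterval of $[0,1)$ on which $(1-x^2)^{1/2}$ is bounded below, yielding $|f(0)|\lesssim \|f\|_{L^2(0,1)} + \|(1-|\cdot|^2)^{1/2}f'\|_{L^2(0,1)}$; the $L^q_y$-contribution of the $f(y)$-diagonal is estimated analogously, the logarithmic weights being handled just as at the end of Lemma \ref{lem:T}.

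The main obstacle is the careful bookkeeping of all boundary contributions, and verifying that the pointwise values $f(0)$ and $f(y)$ are uniformly controlled by the two norms on the right-hand side, including the accumulation of logarithmic weights near $y=1$. No new contour difficulty arises: $\Re(\lambda+1)\geq 1$ on the Bromwich contour so $(\lambda+1)^{-1}$ is holomorphic and bounded, and in fact the low/high-frequency splitting employed in Proposition \ref{prop:K} becomes superfluous here, as the extra $\lambda$ in the integrand cancels the $\lambda^{-1}$ singularity of each $G_{V,j}$ at $\lambda=0$.
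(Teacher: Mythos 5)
Your proposal is correct and identifies the essential mechanism that the paper also uses: an integration by parts in $x$ to trade the extra factor of $\lambda$ in the Bromwich integrand for a factor of $(\lambda+1)^{-1}$, thereby improving the symbol decay from $\mc O(\langle\omega\rangle^{-1})$ to $\mc O(\langle\omega\rangle^{-2})$ while transferring a derivative onto $f$, and then separately controlling the boundary contributions $f(0)$ and $f(y)$ by pointwise (Sobolev-type) estimates against $\|(1-|\cdot|^2)^{1/2}f'\|_{L^2}+\|f\|_{L^2}$.

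Your one substantive deviation from the paper is the remark that the low/high-frequency split of Proposition \ref{prop:K} is superfluous here. That observation is correct: since the explicit $\lambda$ cancels the $\lambda^{-1}$ singularity of the $G_{V,j}$ at the origin, and since integration by parts supplies the extra $\langle\omega\rangle^{-1}$, all resulting kernels (bulk and boundary) are $\mc O(\langle\omega\rangle^{-2})$ uniformly in $\omega$, so the Fourier integral converges absolutely and the two integrations by parts in $\omega$ produce $\langle s+\log(1-x)\rangle^{-2}$ decay with no need for the cutoff $\chi$. The paper still performs the split, gaining the modest convenience that on the compact $\omega$-interval no $x$-integration by parts is required and the low-frequency piece is bounded directly by $\|f\|_{L^2}$; but your streamlined version reaches the same endpoint.

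Two points deserve sharpening when you flesh this out. First, the phrase that ``the explicit $\lambda$ is replaced by $\lambda/(\lambda+1)=\mc O(1)$'' obscures where the decay actually comes from: the gain is the $(\lambda+1)^{-1}$ factor, which is $\mc O(\langle\omega\rangle^{-1})$, while the $\lambda$ in the numerator is what cancels the $\lambda^{-1}$ sitting inside $G_{V,j}$; it is worth stating this composition explicitly so that the reader sees the net $\mc O(\langle\omega\rangle^{-2})$. Second, for the $f(y)$-diagonal you appeal to ``logarithmic weights being handled just as at the end of Lemma \ref{lem:T},'' but the mechanism there is different: in Lemma \ref{lem:T} the $\langle\log(1-y)\rangle^3$ weight multiplies an \emph{integral} kernel in $x$ and is simply $L^q_y$-integrable, whereas the $f(y)$-diagonal is a multiplication operator and one must verify a \emph{uniform} pointwise bound of the form $(1-y)^{\alpha}|f(y)|\lesssim\|(1-|\cdot|^2)^{1/2}f'\|_{L^2}+\|f\|_{L^2}$ for some $\alpha>0$. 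Here the extra power $(1-y)$ coming from the boundary evaluation of $(1-x)^{\lambda+1}$ at $x=y$ is what supplies this weight after absorbing the logarithmic losses from the stationary-phase estimate; this is precisely how the paper closes the argument, and your sketch should make that step explicit.
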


\begin{proof}
Let $\widetilde \chi: \R^2\to [0,1]$ be a smooth cut-off that
satisfies $\widetilde\chi(x)=1$ if $|x|\leq 1$ and $\widetilde
\chi(x)=0$ if $|x|\geq
2$. We define $\chi: \C\to [0,1]$ by $\chi(z):=\widetilde\chi(\Re
z,\Im z)$.
We split $\dot T_\epsilon(s)=\dot T_\epsilon^\flat(s)+\dot
T_\epsilon^\sharp(s)$, where
\begin{align*}
  \dot T_\epsilon^\flat(s)f(y)
  &:=\frac{1}{2\pi i}\lim_{N\to\infty}
  \int_{\epsilon-iN}^{\epsilon+iN}\chi(\lambda)\lambda e^{\lambda s}
    [G_V(y,x,\lambda)-G_0(y,x,\lambda)]f(x)dx d\lambda \\
    \dot T_\epsilon^\sharp(s)f(y)
  &:=\frac{1}{2\pi i}\lim_{N\to\infty}
  \int_{\epsilon-iN}^{\epsilon+iN}[1-\chi(\lambda)]\lambda e^{\lambda s}
    [G_V(y,x,\lambda)-G_0(y,x,\lambda)]f(x)dx d\lambda.
\end{align*}
For the low-frequency part $\dot T_\epsilon^\flat(s)f$, the additional
factor of $\lambda$ (compared to $T_\epsilon(s)f$) is helpful as it cancels the
singularity of the Green function at $\lambda=0$. Consequently, we
immediately infer the bound
\[ \|e^{-\epsilon s}\dot T_\epsilon^\flat(s) f\|_{L^p_s(0,\infty)L^q(0,1)}\lesssim \left
    \|(1-|\cdot|^2)^\frac12 f\right \|_{L^2(0,1)}\lesssim \|f\|_{L^2(0,1)} \]
by proceeding as in the proofs of Proposition \ref{prop:K} and Lemma
\ref{lem:T}.

The high-frequency part $\dot T_\epsilon^\sharp(s)f$ is more delicate
as the additional $\lambda$ destroys the inverse square decay of the
Green function as $|\Im\lambda|\to\infty$. Consequently, we need to
perform an integration by parts with respect to $x$ in order to
recover the decay. More precisely, by Lemma \ref{lem:GV-G0}, we have
\[ \dot T_\epsilon^\sharp(s)f(y)=\frac{1}{2\pi
    i}\lim_{N\to\infty}\sum_{j=1}^4
  \int_{\epsilon-iN}^{\epsilon+iN} [1-\chi(\lambda)]e^{\lambda
    s}\int_0^1 \lambda G_{V,j}(y,x,\lambda)f(x)dx d\lambda \]
and an integration by parts yields
\begin{align*}
  \int_0^1 &\lambda G_{V,1}(y,x,\lambda)f(x)dx \\
  &=(1+y)^{-\lambda}\int_y^1 (1-x)^\lambda \mc
    O((1-y)^0(1-x)^0\langle\omega\rangle^{-1})f(x)dx \\
  &=(1+y)^{-\lambda}(1-y)^\lambda\mc
    O((1-y)\langle\omega\rangle^{-2})f(y) \\
    &\quad
 +(1+y)^{-\lambda}\int_y^1 (1-x)^\lambda
    \mc O((1-y)^0(1-x)^0\langle\omega\rangle^{-2})f(x)dx \\
  &\quad +(1+y)^{-\lambda}\int_y^1 (1-x)^\lambda \mc
    O((1-y)^0(1-x)\langle\omega\rangle^{-2})f'(x)dx \\
           &=:(1+y)^{-\lambda}(1-y)^\lambda
             \mc O((1-y)\langle\omega\rangle^{-2})f(y)+\int_0^1
             H_{V,1}(y,x,\lambda)f(x)dx \\
           &\quad +
    \int_0^1 H'_{V,1}(y,x,\lambda)f'(x)dx.
\end{align*}
Note that the kernels $H_{V,1}$ and $H_{V,1}'$ are of the same type as
$G_{V,1}$.
By the same procedure we obtain an analogous representation of
$\int_0^1 \lambda G_{V,2}(y,x,\lambda)f(x)dx$.
The remaining two contributions produce an additional boundary term,
i.e., 
\begin{align*}
  \int_0^1 &\lambda G_{V,3}(y,x,\lambda)f(x)dx \\
  &=(1+y)^{-\lambda}\int_0^y (1-x)^\lambda \mc
    O((1-y)^0(1-x)^0\langle\omega\rangle^{-1})f(x)dx \\
           &=(1+y)^{-\lambda}\mc O(\langle\omega\rangle^{-2})f(0)
             +(1+y)^{-\lambda}(1-y)^\lambda \mc
    O((1-y)\langle\omega\rangle^{-2})f(y) \\
    &\quad +\int_0^1 H_{V,3}(y,x,\lambda)f(x)dx+\int_0^1 H'_{V,3}(y,x,\lambda)f'(x)dx
\end{align*}
and analogously for $G_{V,4}$. This means that
\begin{align*}
  e^{-\epsilon s}\dot T_\epsilon^\sharp(s)f(y)&=f(0)\int_\R e^{i\omega
    (s-\log(1+y))}\mc O(\langle \omega\rangle^{-2})d\omega \\
  &\quad +f(y)\int_\R e^{i\omega (s-\log(1+y)+\log(1-y))}\mc
    O((1-y)\langle\omega\rangle^{-2})d\omega \\
  &\quad +f(y)\int_\R e^{i\omega s}\mc
    O((1-y)\langle\omega\rangle^{-2})d\omega 
 +[A_\epsilon(s)f](y)+[B_\epsilon(s)f'](y) \\
  &=O(\langle s\rangle^{-1})f(0)+O(\langle
    s\rangle^{-1}(1-y)^\frac34)f(y)
    +[A_\epsilon(s)f](y)+[B_\epsilon(s)f'](y),
\end{align*}
where the operators $A_\epsilon(s)$ and $B_\epsilon(s)$ satisfy the
bound for $T_\epsilon(s)$ from Lemma \ref{lem:T}.
Consequently, we find
\begin{align*}
  \|e^{-\epsilon s}\dot T_\epsilon^\sharp(s)f\|_{L^p_s(0,\infty)L^q(0,1)}&\lesssim
  |f(0)|+\left
  \|(1-|\cdot|)^\frac34f\right\|_{L^q(0,1)} \\
  &\quad +\left \|(1-|\cdot|^2)^\frac12 f\right\|_{L^2(0,1)}+\left \|
    (1-|\cdot|^2)^\frac12 f' \right \|_{L^2(0,1)} \\
  &\lesssim \left \|(1-|\cdot|)^\frac34f\right\|_{L^\infty(0,1)}+\left
    \|(1-|\cdot|^2)^\frac12 f'\right\|_{L^2(0,1)}
    +\|f\|_{L^2(0,1)}
\end{align*}
and the simple estimate
\begin{align*}
  (1-y)^\frac34|f(y)|
  &\lesssim \int_y^1 (1-x)^\frac34|f'(x)|dx
  +\int_y^1 (1-x)^{-\frac14}|f(x)|dx \\
&\lesssim \left\|(1-|\cdot|^2)^\frac12
f'\right\|_{L^2(0,1)}+\|f\|_{L^2(0,1)}
\end{align*}                                          
for all $y\in [0,1]$ finishes the proof.
\end{proof}

\bibliographystyle{plain} \bibliography{../refs/refs}

  \end{document}